\documentclass[11pt]{amsart}
\usepackage[colorlinks,citecolor=blue,urlcolor=black,linkcolor=black]{hyperref}
\usepackage{amssymb, latexsym, enumitem}

\newtheorem{thm}{Theorem}[section]
\newtheorem{lemma}[thm]{Lemma}
\newtheorem{cor}[thm]{Corollary}
\newtheorem{claim}{Claim}[thm]

\newtheorem{fact}[thm]{Fact}
\newtheorem{mainthm}{Theorem}

\theoremstyle{definition}
\newtheorem{defn}[thm]{Definition}
\newtheorem{conv}[thm]{Convention}

\theoremstyle{remark}
\newtheorem{remark}[thm]{Remark}
\newtheorem{rmk}{Remark}

\DeclareMathOperator{\suc}{succ}
\DeclareMathOperator{\cf}{cf}
\DeclareMathOperator{\dom}{dom}
\DeclareMathOperator{\im}{Im}
\DeclareMathOperator{\otp}{otp}
\DeclareMathOperator{\acc}{acc}
\DeclareMathOperator{\nacc}{nacc}
\DeclareMathOperator{\p}{P}
\DeclareMathOperator{\add}{Add}

\newcommand\symdiff{\mathbin\bigtriangleup}
\newcommand\s{\subseteq}
\newcommand\sq{\sqsubseteq}
\newcommand\sqleft[1]{\mathrel{_{#1}{\sqsubseteq}}}

\newcommand\ch{\textup{CH}}
\newcommand\extend{\textsf{extend}}
\newcommand\pvec{\fbox{${\cdots}$}\hspace{1pt}}
\renewcommand\mid{\mathrel{|}\allowbreak}
\renewcommand\restriction{\mathbin\upharpoonright}

\author{Yair Hayut}
\address{Einstein Institute of Mathematics, The Hebrew University of Jerusalem, Israel}
\urladdr{https://math.huji.ac.il/~yairhayut/}
\email{yair.hayut@mail.huji.ac.il}

\author{Assaf Rinot}
\address{Department of Mathematics, Bar-Ilan University, Ramat-Gan 52900, Israel}
\urladdr{https://www.assafrinot.com}

\author{Zhixing You}
\address{Einstein Institute of Mathematics, The Hebrew University of Jerusalem, Israel}
\email{zhixingy121@gmail.com}

\date{Preprint as of \today. For updates, visit \textsf{http://p.assafrinot.com/76}.}

\title{More notions of forcing add a square}

\begin{document}
\begin{abstract} Foreman and Magidor showed that $\ch$ implies the existence of a countably-closed $\aleph_2$-cc forcing notion $\mathbb P$ for adding $\square_{\omega_1}$.
Here, we show that $\mathbb P$ may consistently be realized as an $\aleph_2$-Souslin tree.
More generally, we prove that $\square_\lambda$ may be added by a $\lambda^+$-Souslin tree,
providing the first analog of the Foreman--Magidor forcing at the level of successors of singular cardinals.
Our construction is uniform and extends to inaccessible cardinals as well.
\end{abstract}

\maketitle
\section{Introduction}
The square principle $\square_\lambda$ was introduced by Jensen \cite{jensen} in his study of the fine structure of G\"odel's constructible universe.
He also devised a countably-closed $(\lambda+1)$-strategically-closed forcing notion for adding $\square_\lambda$.
Baumgartner introduced a weakening $\square^B_\lambda$ of $\square_\lambda$ that may be added by a $({<}\lambda)$-directed-closed forcing notion,
hence is compatible with $\lambda$ being supercompact (unlike $\square_\lambda$).
At the level of a regular $\lambda>\aleph_1$,
Baumgartner's square is just one club-shooting away from Jensen's square,
and at the level of $\lambda=\aleph_1$, the two principles are logically equivalent.
More forcing notions for adding $\square_{\aleph_1}$ were introduced by
Dolinar--D\v{z}amonja \cite{MR2974841}, Krueger \cite{MR3194418},
and Neeman \cite{MR3696074}.
A third square principle is Todor{\v{c}}evi{\'c}'s $\square(\kappa)$ \cite{TodActa}
that follows from $\square_\lambda$ whenever $\kappa=\lambda^+$.

It is an unpublished result of Foreman and Magidor that for an infinite regular cardinal $\lambda=\lambda^{<\lambda}$ there is a $\lambda$-closed $\lambda^+$-cc forcing notion that adds Baumgartner's square $\square^B_\lambda$.
Alternative posets for introducing $\square^B_\lambda$ are given in \cite[\S1.3]{MR701126} and \cite[Definition~3.12]{paper31}.
In this paper, we show that squares can be added by a poset as good as a Souslin tree. To exemplify:

\begin{mainthm}\label{thma} For every uncountable cardinal $\lambda=\lambda^{<\lambda}$,
if $\diamondsuit(E^{\lambda^+}_\lambda)$ holds,\footnote{Here, $E^\kappa_\lambda$ stands for $\{\alpha<\kappa\mid\cf(\alpha)=\lambda\}$; $E^\kappa_{\neq\lambda}$, $E^\kappa_{<\lambda}$, $E^\kappa_{>\lambda}$, $E^\kappa_{\ge\lambda}$ are defined similarly.}
then:
\begin{itemize}
\item there is an $\aleph_1$-complete $\lambda^+$-Souslin tree that forces $\square_\lambda$;
\item there is a $\lambda$-complete $\lambda^+$-Souslin tree that forces $\square_\lambda^B$.
\end{itemize}
\end{mainthm}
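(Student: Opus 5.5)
The plan is to build, by recursion on levels $\alpha<\lambda^+$, a normal $\lambda^+$-tree $T$ whose nodes are themselves partial square sequences. A node of height $\alpha+1$ will be a sequence $\vec C=\langle C_\delta\mid \delta\le\alpha,\ \delta\in\acc(\lambda^+)\rangle$ satisfying the relevant coherence. For $\square_\lambda$ this means: $C_\delta$ is club in $\delta$, $\otp(C_\delta)\le\lambda$, and $C_{\bar\delta}=C_\delta\cap\bar\delta$ for all $\bar\delta\in\acc(C_\delta)$. For $\square^B_\lambda$ we use the Baumgartner version, where the order-type requirement applies only at points of cofinality ${<}\lambda$ and accumulation points of cofinality $\lambda$ are excluded. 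Nodes are ordered by end-extension.

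Forcing with $T$ adds a cofinal branch, whose union is then a full $\square_\lambda$ (respectively $\square^B_\lambda$) sequence. The reason is that every level of the branch is a node, and Souslinity gives the chain condition, so $\lambda^+$ is preserved. A density argument is not even needed, since the branch has length $\lambda^+$ and therefore assigns a $C_\delta$ to every limit $\delta<\lambda^+$.

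The recursion itself proceeds as follows.

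\textbf{Successor levels.} Every node is extended by all possible nodes of the next height. There are at most $\lambda$ such extensions, using $\lambda^{<\lambda}=\lambda$ together with the bound on order types, so the levels have size $\le\lambda$.

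\textbf{Limit levels $\alpha$ of cofinality ${<}\lambda$.} We must add a limit node to a $\lambda$-sized family of branches, including a branch through every node $x$. I would use the standard "ladder" trick. Fix a club $c_\alpha\s\alpha$ of order type $\cf(\alpha)<\lambda$. Climb from $x$ along $c_\alpha$, at each step picking the extension that puts the point of $c_\alpha$ into $C$-accumulation coherently. The limit is then the node whose new $C_\alpha$ is the closure of the climbing points.

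The key invariant making this work is that at limit stages of the climb, the union of an increasing chain of nodes remains a node when we set $C_\delta$ to be the climbing points below $\delta$. This yields $\aleph_1$-completeness (in fact ${<}\lambda$-completeness along such chains) for the $\square_\lambda$ tree. For the $\square^B_\lambda$ version, excluding cofinality-$\lambda$ accumulation points is exactly what allows the climb to continue through chains of any length ${<}\lambda$, giving $\lambda$-completeness. To keep levels of size $\lambda$, at each limit level we only put in limits of chains determined by a $\lambda$-sized collection of "canonical" climbs, in the style of the microscopic approach.

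\textbf{Limit levels $\alpha\in E^{\lambda^+}_\lambda$.} Here $\diamondsuit(E^{\lambda^+}_\lambda)$ is used to seal antichains. Let $A_\alpha$ be the guess. For each $x\in T\restriction\alpha$ we build a branch through $x$ that meets an element of $A_\alpha$, provided $A_\alpha$ is a maximal antichain of $T\restriction\alpha$. We then put only these $\lambda$ many limits on level $\alpha$. The standard reflection argument shows that every maximal antichain of $T$ is bounded, so $T$ is $\lambda^+$-Souslin, since the diamond guesses correctly on a stationary set, and closure points form a club.

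The main obstacle is this last case. A branch of length $\alpha$ with $\cf(\alpha)=\lambda$ must have a limit node, which requires choosing $C_\alpha$ of order type $\lambda$ (resp.\ just club) coherent with the $C_\delta$ for all $\delta\in\acc(C_\alpha)$. The branch must therefore be constructed so that it threads a club. I would handle this by building the branch along a club $c_\alpha\s\alpha$ of order type $\lambda$. At successor steps we first extend into the antichain (when needed, i.e.\ at the first step) and then ensure the next point of $c_\alpha$ is threaded. At limit steps $i<\lambda$ we take the union, with $C_{c_\alpha(i)}$ being $c_\alpha\cap c_\alpha(i)$ up to the climbing choices, which the previous paragraph guarantees is a node. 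This is where $\lambda$-closure of the chains (hence $\lambda^{<\lambda}=\lambda$ and the cofinality restrictions) is really needed. For $\square_\lambda$ proper the order type of $C_\alpha$ becomes exactly $\lambda$, which is allowed. The remaining checks — level sizes, normality, and that the generic branch's union is a square sequence — are routine.
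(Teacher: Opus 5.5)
\textbf{There is a genuine gap: you never secure that the limit of a climb already lies in the tree, and with the levels you describe it does not.}

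Your overall plan is a reasonable classical alternative to the paper's route. Nodes are partial square sequences, and at $\alpha\in E^{\lambda^+}_\lambda$ you first step into the guessed antichain, then climb along a ladder of order type $\lambda$, and put only these $\lambda$ many limits on level $\alpha$.

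The failing step is this. At a limit stage $i$ of a climb along $c_\alpha$, the union $u$ of the climb so far must already belong to $T_{c_\alpha(i)}$, a level frozen at an earlier stage. Also $u{}^\smallfrown\langle c_\alpha\cap c_\alpha(i)\rangle$ must belong to $T_{c_\alpha(i)+1}$.
\begin{itemize}
\item You populate a level $\delta$ of cofinality ${<}\lambda$ only with climbs along a single ladder $c_\delta$ chosen at stage $\delta$. So $u\in T_\delta$ can only be expected when $c_\alpha\cap\delta$ is a tail of $c_\delta$, that is, when your one-per-level ladders cohere. Such a coherent single-ladder system is itself a square-type object that you do not have in $V$.
\item The same defect refutes the claimed $\aleph_1$-completeness: a countable chain converging to $\delta$ that is not one of the canonical climbs has no bound.
\item The sentence ``the union of an increasing chain of nodes remains a node'' conflates being a coherent partial sequence with being a member of $T$.
\end{itemize}

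This is exactly the point the paper's machinery addresses. It starts from a genuinely coherent $\mathcal C$-sequence, with up to $\lambda$ many clubs at each point of cofinality ${<}\lambda$, which $\lambda^{<\lambda}=\lambda$ affords. It builds the branches $\mathbf b^C_x$ canonically from $(C,x)$ via $\extend$, so that $\mathbf b^C_x\restriction\beta=\mathbf b^{C\cap\beta}_x$ for $\beta\in\acc(C)$. That node was already placed at stage $\beta$, because $C\cap\beta\in\mathcal C^1_\beta$. The paper also seals antichains not at $\alpha$ itself but at points of $\nacc(C)$ where the diamond guessed correctly. This uses the hitting property of the proxy principle obtained from $\diamondsuit(E^{\lambda^+}_\lambda)$ and $\lambda^{<\lambda}=\lambda$.

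\textbf{How to repair it within your framework.} For $\cf(\delta)<\lambda$, let $T_\delta$ be the set of all branches through $T\restriction\delta$ that admit a coherent $C_\delta$ of order type ${<}\lambda$; there are at most $\lambda^{\cf(\delta)}=\lambda$ such branches. Let $T_{\delta+1}$ consist of all such coherent one-step extensions.
\begin{itemize}
\item For $\square^B_\lambda$ every branch qualifies. A club of order type $\cf(\delta)$ whose nonaccumulation points are successor ordinals imposes no $\sq_\lambda$-coherence requirement. That is the real source of $\lambda$-completeness, not ``excluding accumulation points of cofinality $\lambda$''; a club of order type $\le\lambda$ has none.
\item For strict $\square_\lambda$ with $\lambda\ge\aleph_2$, a branch of length $\delta\in E^{\lambda^+}_{\omega_1}$ need not be threadable. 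So only the countable-cofinality levels contain all branches, matching the paper's choice $\chi=\aleph_1$ followed by Remark~\ref{rem1}.
\end{itemize}
With this definition, the union of your climb at $c_\alpha(i)$ is threaded by a tail of $c_\alpha\cap c_\alpha(i)$, hence lies in $T_{c_\alpha(i)}$. Normality at small-cofinality levels follows by the same climbing argument, and the classical diamond argument at $\alpha\in E^{\lambda^+}_\lambda$ then goes through.

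Repaired this way, your proof is a more elementary construction tailored to $\lambda=\lambda^{<\lambda}$. The paper's Theorem~\ref{pro1} buys uniformity: it also works at successors of singular cardinals, where levels of cofinality $\cf(\lambda)$ cannot absorb all branches, so the canonical-branch device is unavoidable there. It also yields the narrow proxy principle in the extension, not just the square.
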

\begin{rmk} This is sharp: by \cite[Theorem~1]{MR763902} and \cite[Theorem~6.1]{MR2050172},
the Proper Forcing Axiom implies that $\square(\aleph_3)$ cannot be introduced by an $\aleph_2$-closed forcing. By \cite[Corollary~1.16]{paper20},
it is compatible with the existence of a $\cf(\lambda)$-complete $\lambda^+$-Souslin tree for every cardinal $\lambda\ge\aleph_2$.
\end{rmk}

Our tree constructions make use of the proxy principles,
and this has well-known advantages \cite{paper65} that are apparent here as well.
For starters, it provides the first consistent way to add $\square_\lambda$
by a $\lambda^+$-cc forcing notion for $\lambda$ a singular cardinal (indeed, by forcing with $\lambda^+$-Souslin tree). To exemplify one exotic scenario:

\begin{mainthm}\label{thmb} Suppose that $\lambda$ is a measurable cardinal and $2^\lambda=\lambda^+$.
In the forcing extension by Prikry forcing there is a $\lambda^+$-Souslin tree that forces $\square_\lambda$.
\end{mainthm}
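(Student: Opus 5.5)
Theorem~\ref{thmb} will be obtained by running the paper's uniform construction, which is driven by the proxy principles rather than by $\diamondsuit(E^{\lambda^+}_\lambda)$ itself, in the Prikry extension. So the plan is to isolate the hypothesis the general construction actually needs and check that it survives Prikry forcing.

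The general theorem I expect the paper to prove has the form: if $\lambda^{<\lambda}=\lambda$ or $\lambda$ is singular, $2^\lambda=\lambda^+$, and $\p^-(\lambda^+,\lambda^+,{\sq},1)$ holds together with $\diamondsuit(\lambda^+)$, then some $\lambda^+$-Souslin tree forces $\square_\lambda$. Here the conclusion for singular $\lambda$ should only need a $\cf(\lambda)$-complete (or merely normal) tree. The tree nodes would be partial square sequences, i.e.\ coherent club-assignments on an initial segment of $\lambda^+$, ordered by end-extension. The proxy-principle ladder $\vec C$ guides the construction of the limit levels, so that for $\alpha$ of cofinality ${<}\lambda$ the canonical branch through the nodes along $C_\alpha$ extends, and we may place $C$ on top of it with $\otp<\lambda$. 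The diamond seals antichains, and coherence of $\vec C$ ensures the branches chosen at accumulation points agree.

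Granting that theorem, the steps in the extension $V[G]$ are as follows. First, cardinal arithmetic. Prikry forcing adds no bounded subsets of $\lambda$ and is $\lambda^+$-cc with size $2^\lambda=\lambda^+$. Hence $\lambda^+$ is preserved, $2^\lambda=\lambda^+$ still holds, and $\cf(\lambda)=\omega$. Second, diamond. $\ch_\lambda$ at the successor of a singular cardinal gives $\diamondsuit(\lambda^+)$, in fact $\diamondsuit(E^{\lambda^+}_{\neq\omega})$, by Shelah's theorem. Third, the proxy principle. In $V$, $\lambda$ is measurable, so no square is available. However, after Prikry forcing $\lambda$ is singular of countable cofinality, and we need $\p^-(\lambda^+,\lambda^+,{\sq},1)$, which is essentially $\square(\lambda^+)$ plus a mild guessing feature (hitting stationary sets). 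For this I would invoke the known result that Prikry forcing over a measurable adds a weak square, and indeed $\square(\lambda^+)$-type sequences. Alternatively, I would use the fact (Rinot and collaborators) that after Prikry forcing $\p(\lambda^+,\lambda^+,{\sq_\omega},\ldots)$ holds, using $2^\lambda=\lambda^+$. The relaxed coherence relation $\sq_\omega$ or $\sqleft{\chi}$, from the list of macros, suggests the construction tolerates such weakened coherence.

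The main obstacle is this third step. Prikry forcing adds a $\square^*_\lambda$ sequence, since in $V[G]$ we have $\lambda^{\omega}=\lambda^+$. But a genuine $\sq$-coherent proxy sequence is doubtful, because $\square_\lambda$ itself fails in $V[G]$ in the relevant sense; that failure is exactly why the theorem is interesting. So I would aim to verify that the general construction only uses $\p^-(\lambda^+,\lambda^+,{\sqleft{\omega}},\ldots)$, coherence modulo countable-cofinality points, together with $\diamondsuit(\lambda^+)$. I would then derive that weak proxy from the approachability/weak-square structure that Prikry forcing provides. If this fails, the fallback is to construct the proxy sequence directly from a Prikry name, using $\lambda^+$-cc to convert names for clubs into ground-model objects of size $\lambda$.
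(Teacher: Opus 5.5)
\textbf{There is a genuine gap, in your third step.} Your overall plan matches the paper's. Its proof of Theorem~\ref{thmb} is only two steps. First, in the Prikry extension the principle $\p(\lambda^+,\lambda^+,{\sq},1,\{\lambda^+\},\lambda^+,1)$ holds, in the $\lambda$-bounded form that Corollary~\ref{cor3} requires; this is obtained from $2^\lambda=\lambda^+$ via the Main Theorem of \cite{paper26}. Second, apply Corollary~\ref{cor3}. The third step, which you leave open as the ``main obstacle'', is where your proposal fails, and the failure comes from misreading the parameters.

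The second and sixth parameters $\mu=\lambda^+$ mean that each $\mathcal C_\alpha$ may contain up to $\lambda$ many clubs. So the required input is a $\lambda$-bounded $\sq$-coherent sequence of width at most $\lambda$ (a $\square^*_\lambda$-type object) together with the hitting property. It is not ``essentially $\square(\lambda^+)$''. Such a sequence does not yield $\square_\lambda$, so the possible failure of $\square_\lambda$ in $V[G]$ gives no reason to doubt that it exists.

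The missing idea is exactly what Theorem~\ref{pro1} accomplishes: the generic branch through $\mathbf T$ selects a single club from each $\mathcal C_\alpha$, turning $\mu=\kappa$ into $\mu=2$. This is why a weak-square-type input can exist in the Prikry model even when $\square_\lambda$ fails there. It is also why the $\lambda$-boundedness of the input is what makes the output $\square_\lambda$, rather than merely $\square(\lambda^+)$ as in Corollary~\ref{cor5}.

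Your remark that Prikry forcing adds a weak square points at the right object, although $\lambda^{\aleph_0}=\lambda^+$ is by itself no reason for it. But weak square alone is not enough. The hitting clause is what Claim~\ref{c233} uses to seal antichains. Establishing it in the Prikry extension is the substantive content of the cited theorem, and your proposal never addresses it.

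\textbf{Your proposed repair goes in the wrong direction.} You suggest weakening the input's coherence at countable-cofinality points, but the construction has no mechanism for improving coherence. At a level $\alpha$ of cofinality at least $\chi$, the node $\mathbf b^C_x$ is built along a ladder $C\in\mathcal C_\alpha$. It must take the value $C\cap\gamma$ at each $\gamma\in\acc(C)$, and this value must belong to $\mathcal C_\gamma$; only such ladders $C$ (those in $\mathcal C^1_\alpha$) are usable. So any genuine coherence in the generic sequence is drawn from genuine coherence already present in the input. An input that is incoherent at countable-cofinality points would either break this step or produce an output that is not a $\square_\lambda$-sequence.

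The weakening that Remark~\ref{rem1} shows to be harmless is $\sq_{\aleph_1}$, i.e.\ taking $\chi=\aleph_1$ in Theorem~\ref{pro1}. But then the hypothesis $\nu^{<\chi}<\kappa$ forces $\lambda^{\aleph_0}=\lambda$. That fails in $V[G]$, since, as you note yourself, $\lambda^{\aleph_0}=\lambda^+$ there.

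So in the Prikry extension one must supply genuine $\sq$-coherence with $\chi=\omega$, together with the hitting property, and that is exactly the input the paper imports. Your fallback of building the sequence ``directly from a Prikry name'' is where such an argument would have to take place, and as written it contains none of it.
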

\begin{rmk} The preceding model is a two-step iteration of $\lambda^+$-cc notions of forcing.
Note that by \cite[Theorem~11.1]{MR1838355}, $\square_\lambda$ can fail in the intermediate model.
\end{rmk}

Second, it provides analogous results for inaccessibles. To exemplify:
\begin{mainthm}\label{thmc} Suppose that $\diamondsuit$ holds over a nonreflecting stationary subset of a strongly inaccessible $\kappa$.
Then there is a $\kappa$-Souslin tree that forces $\square(\kappa)$.
\end{mainthm}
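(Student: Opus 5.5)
Since $\kappa$ is strongly inaccessible, every $\alpha<\kappa$ has $|\alpha|^{<\kappa}$-type bounds under control, so the nodes of a tree of height $\kappa$ can be taken to be bounded subsets of $\kappa$ (or sequences of length ${<}\kappa$) without any cardinal arithmetic issues. Let $S\s\kappa$ be the nonreflecting stationary set carrying $\diamondsuit(S)$. The plan is to construct a normal, splitting $\kappa$-tree $T$ by recursion on levels, where each node $t\in T_\alpha$ is a partial $\square(\alpha)$-like sequence: $t=\langle \mathcal C^t_\beta\mid \beta\le\alpha,\ \beta\text{ limit}\rangle$ with each $\mathcal C^t_\beta$ a singleton $\{c^t_\beta\}$, $c^t_\beta$ a club in $\beta$, coherent ($\bar\beta\in\acc(c^t_\beta)\Rightarrow c^t_{\bar\beta}=c^t_\beta\cap\bar\beta$), and ordered by end-extension. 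A generic branch through $T$ then yields a coherent sequence $\langle c_\beta\mid\beta<\kappa\rangle$; it remains to see that the generic sequence has no thread, which will follow by a density argument, since any condition can be extended so as to kill a given name for a thread at some level (a thread's restriction $C\cap\beta$ must equal $c_\beta$ for accumulation points, and we can diagonalize by making $c_\beta$ avoid a prescribed candidate, e.g.\ choose $c_\beta$ of order type $\omega$ whenever we need to kill). Since $T$ will be $\kappa$-Souslin, hence $\kappa$-cc and (being a tree with enough distributivity) adding no new bounded subsets, $\kappa$ stays inaccessible and the generic sequence witnesses $\square(\kappa)$ in the extension.

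The key steps, in order: (1) Use the nonreflection of $S$ to run the construction: at limit levels $\alpha\notin S$ we may simply take all (or a $<\kappa$-sized but sufficient set of) limits of branches through $T\restriction\alpha$ that still satisfy coherence; at $\alpha\in S$ we must be selective. Nonreflection is what allows the required extension property: for every $x\in T\restriction\alpha$ and $\alpha\in\acc(\kappa)$, there is a branch through $x$ whose limit is a legitimate node, since we can build it along a club $D\s\alpha$ disjoint from $S$ (available as $S\cap\alpha$ is nonstationary), so at every intermediate limit stage of $D$ the limit has been admitted. For the new top club $c_\alpha$ at level $\alpha$ we take $c_\alpha=D$-like sets coded into the branch, guaranteeing coherence by having built the branch so that $c_{\bar\beta}=c_\alpha\cap\bar\beta$ for $\bar\beta\in\acc(D)$. (2) At $\alpha\in S$, use the diamond guess $A_\alpha$: if it codes a maximal antichain of $T\restriction\alpha$, for each $x\in T\restriction\alpha$ pick one branch through $x$ (constructed along a club avoiding $S$, as in step 1) that passes through an element of the antichain, and let $T_\alpha$ consist only of these limits. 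This yields the $\kappa$-Souslin property by the standard argument. Also at these levels, interleave the thread-killing requirement using the same guess.

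The main obstacle is step (1)--(2) combined: ensuring that the branches selected at $\alpha\in S$ (which must pass through antichain elements and realize a coherent club $c_\alpha$ extending all earlier $c_{\bar\beta}$ at its accumulation points) actually have limits in $T\restriction\alpha$ already admitted at every intermediate limit level. I would first try to build each such branch by a recursion of length $\cf(\alpha)$ along a club $D\s\alpha$ disjoint from $S$ with $\otp(D)=\cf(\alpha)$, taking $c_\alpha=D$ and choosing successive nodes so that their last club at points of $\acc(D)$ is $D\cap\beta$; because those points are outside $S$, the full set of coherent limits is present there, so the recursion never gets stuck. If necessary I would instead invoke a proxy principle $\p^-(\kappa,2,\sq,\kappa,\{S\})$ derived from $\diamondsuit(S)$ plus nonreflection to supply these ladders uniformly.
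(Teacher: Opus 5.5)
Your proposal is correct in outline, but it takes a different route from the paper. The paper has no construction tailored to this hypothesis. It quotes that $\diamondsuit$ over a nonreflecting stationary subset of an inaccessible $\kappa$ yields the wide proxy principle $\p(\kappa,\kappa,{\sq},1,\{\kappa\},\kappa,1)$, and then applies the general Theorem~\ref{pro1} via Corollary~\ref{cor5}.

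In that construction:
\begin{itemize}
\item nodes are coherent elements of $\prod_{\beta<\alpha}\mathcal C_\beta$;
\item limit levels consist only of canonical branches $\mathbf b^C_x$ climbed along ladders $C\in\mathcal C_\alpha$, with $C\cap\gamma$ inserted at each $\gamma\in\acc(C)$;
\item antichains are sealed at non-accumulation points of $C$ by the $\extend$ action together with the hitting property;
\item unthreadability also comes from the hitting property (Claim~\ref{cla1}), and the extension satisfies the narrow principle $\p(\kappa,2,{\sq},1,\{\kappa\},2,1)$, hence $\square(\kappa)$.
\end{itemize}

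You instead run a classical Jensen-type construction. Levels off $S$ are full, which is affordable only because strong inaccessibility bounds the number of branches and clubs below $\kappa$. Levels on $S$ are thin, and there $\diamondsuit(S)$ seals antichains at the level itself. Nonreflection supplies $S$-avoiding clubs, used both to climb through earlier thin levels and as the new top clubs. Unthreadability then becomes a density argument using the freedom at full levels. In effect you inline the structure (clubs whose accumulation points avoid $S$) that underlies the proxy principle the paper quotes.

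Your route is self-contained and elementary for this case. The paper's is uniform and works where full levels are impossible, namely at successors of regular and singular cardinals (Theorems~\ref{thma}, \ref{thmb}, \ref{thmd}). It also handles $\xi$-bounded and $\sq_\chi$ variants and $\chi$-completeness, and it yields the narrow proxy principle in the extension, which later corollaries need.

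Three points to tighten:
\begin{enumerate}
\item In your format, a node at level $\alpha+1$ carries the same data as one at level $\alpha$. You need some information, and hence splitting, at successor steps; the paper uses $\mathcal C_{\alpha+1}=\{\{\alpha\},\{0,\alpha\}\}$.
\item ``Making $c_\beta$ avoid a prescribed candidate'' cannot be done naively, since $\dot D\cap\beta$ need not be decided by a node at level $\beta$. Instead:
\begin{itemize}
\item use the $\kappa$-cc to get a ground-model club $D'$ with $x\Vdash D'\s\dot D$;
\item pick $\beta\in\acc(D')\cap E^\kappa_\omega\setminus S$ above $\dom(x)$ with $\otp(D'\cap\beta)>\omega$ (possible since $E^\kappa_\omega\setminus S$ is stationary, $S$ being nonreflecting);
\item use fullness of level $\beta$ to extend $x$ to a node whose $c_\beta$ has order type $\omega$.
\end{itemize}
That node forces $\beta\in\acc(\dot D)$ and $\dot D\cap\beta\neq c_\beta$.
\item Your fallback is misstated. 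An instance of $\p^-(\kappa,2,{\sq},\ldots)$ with its hitting property is already a $\square(\kappa)$-sequence in the ground model, which would make the forcing superfluous. What the hypothesis provides is the wide instance with $\mu=\kappa$, and that is exactly what the paper feeds into Theorem~\ref{pro1}.
\end{enumerate}
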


A third advantage, demonstrated here for concreteness at the level of $\aleph_2$
is that it is possible to get an $\aleph_2$-Souslin tree adding a square without assuming diamond on the critical cofinality (as assumed in Theorem~\ref{thma}).
\begin{mainthm}\label{thmd} If $2^{2^{\aleph_0}}=\aleph_2$ and there exists a nonreflecting stationary subset of $E^{\aleph_2}_{\aleph_0}$, then:
\begin{enumerate}[label=\textup{(\arabic*)}]
\item there is an $\aleph_2$-Souslin tree that forces $\square_{\omega_1}$;
\item there is an $\aleph_1$-complete $\aleph_2$-Souslin tree that forces $\square(\omega_2)$.
\end{enumerate}
\end{mainthm}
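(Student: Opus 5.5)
The plan is to derive Theorem~\ref{thmd} from the paper's general proxy-principle construction, which builds a $\kappa$-Souslin tree whose generic branch codes a square sequence. The hypotheses of Theorem~\ref{thmd} will be converted into the appropriate instance of the proxy principle $\p^-(\kappa,\ldots)$ at $\kappa=\aleph_2$. The key observation is that a nonreflecting stationary $S\s E^{\aleph_2}_{\aleph_0}$ together with $2^{2^{\aleph_0}}=\aleph_2$ already yields enough guessing. Two known facts from the proxy-principle literature will be used:
\begin{itemize}
\item a nonreflecting stationary subset of $E^{\aleph_2}_{\aleph_0}$ yields a $\square(\aleph_2)$-type coherent $C$-sequence avoiding $S$ (Rinot's ``$\p^-(\aleph_2,2,{\sq^*},1)$ from nonreflecting stationary sets'');
\item $2^{2^{\aleph_0}}=\aleph_2$ gives a diamond-like guessing sufficient for $\diamondsuit(\aleph_2)$, via Shelah's theorem that $2^\lambda=\lambda^+$ implies $\diamondsuit(E^{\lambda^+}_{\neq\cf\lambda})$ for $\lambda=2^{\aleph_0}$. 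Note that $\aleph_1\le 2^{\aleph_0}<\aleph_2$ forces $2^{\aleph_0}=\aleph_1$, i.e.\ $\ch$, and then $2^{\aleph_1}=\aleph_2$ gives $\diamondsuit(E^{\aleph_2}_{\aleph_0})$, indeed on stationary subsets of $S$.
\end{itemize}
Combining the two, the plan is to obtain $\p^-(\aleph_2,\aleph_2,{\sq_S},1)$ or a variant in which the ladders through points of $S$ guess. Nonreflection is exactly what makes the sequence ``thread-free'' enough to be a square while still guessing clubs on $S$.

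For clause~(1), the tree construction will proceed level by level. Nodes at level $\alpha$ are approximations $\langle C_\beta\mid\beta\le\alpha\rangle$ to a $\square_{\omega_1}$-sequence: clubs of order type $\le\omega_1$, coherent, and closed under initial segments. At limit levels $\alpha$ one must decide which branches extend. For $\alpha\notin S$, the plan is to use the proxy ladder $C_\alpha$ to build branches along it via the standard ``microscopic'' approach. The coherence $\sq$ of the ladders guarantees that the limit condition (a club at $\alpha$ coherent with earlier ones) can be taken as the accumulation points of the branch, so the branches survive. For $\alpha\in S$, which has countable cofinality, one uses the diamond to seal antichains. Countable cofinality means one need only put a cofinal $\omega$-sequence as the new club at $\alpha$, so no closure issue arises. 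This is exactly why no completeness is claimed in~(1).

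Souslinity then follows as usual, since every maximal antichain is guessed at stationarily many $\alpha\in S$. The generic branch is a full square sequence of order types $\le\omega_1$, hence $\square_{\omega_1}$. For clause~(2), one instead adds $\square(\omega_2)$-approximations, drops the order-type bound, and additionally requires $\aleph_1$-completeness. Countable increasing sequences must then have bounds, so at levels $\alpha\in E^{\aleph_2}_{\aleph_0}\setminus S$ all branches must be extended, by taking the union as the club. Coherence works because there is no order-type requirement, and sealing happens only on $S$, where the tree being $\aleph_1$-complete is still respected since the sealing nodes are chosen to be limits of countable chains. Non-threading of the generic sequence will follow by a density argument.

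The main obstacle will be the sealing step at $\alpha\in S$ in clause~(2). Sealing must extend only guessed branches, yet $\aleph_1$-completeness requires every countable chain to have an upper bound. I would first try to reconcile these by defining the new club at $\alpha$ as the closure of the union. I would also use nonreflection of $S$ to ensure that no point of $S$ is an accumulation point of earlier clubs, so that these choices never conflict with coherence.
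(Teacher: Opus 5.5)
\textbf{Clause (2) has a genuine gap.} Your plan fails at the point you flag yourself, and the proposed fix does not reach it. In an $\aleph_1$-complete tree, every level $\alpha$ of countable cofinality must contain \emph{all} $\alpha$-branches through $T\restriction\alpha$. Each such branch is the union of a countable chain, and that chain must have a bound; this is Promise~\ref{promise4} in the proof of Theorem~\ref{pro1}. So at $\alpha\in S\subseteq E^{\aleph_2}_{\aleph_0}$ you cannot discard the branches that miss the guessed antichain, and nothing is ever sealed at points of $S$. How you pick the club $t(\alpha)$, whether as the closure of a union or otherwise, only affects level $\alpha+1$ and is irrelevant to this.

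Sealing must therefore happen at levels of cofinality $\omega_1$. There your hypotheses do not give $\diamondsuit(E^{\aleph_2}_{\aleph_1})$, since Shelah's theorem yields diamond only on $E^{\aleph_2}_{\aleph_0}$. The missing idea is to transfer the guessing to $\omega_1$-cofinal levels through the ladders, and this is where the nonreflecting set really enters:
\begin{itemize}
\item Via Rinot's theorem (whose hitting set is $E^{\aleph_2}_{\aleph_1}$, not $S$), upgraded from $\sq^*$ to $\sq$, the paper gets $\p(\aleph_2,\aleph_2,\sq,1,\{E^{\aleph_2}_{\aleph_1}\},2,1)$.
\item It then runs Theorem~\ref{pro1} with $\chi=\aleph_1$. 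At every $\alpha\in E^{\aleph_2}_{\aleph_1}$ only the branches $\mathbf b^C_x$ with $C\in\mathcal C_\alpha$ survive.
\item At stationarily many such $\alpha$, every $C\in\mathcal C_\alpha$ has cofinally many $\beta\in\nacc(C)$ at which $\Omega_\beta$ correctly guesses the antichain. These $\beta$ may have countable cofinality.
\item $\extend$ then forces $\mathbf b^C_x$ through the antichain at those $\beta$.
\end{itemize}
Unthreadability of the generic sequence at $\omega_1$-cofinal points likewise comes from this hitting property of the ground ladders, not from a bare density argument.

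\textbf{Clause (1) is a different route and can be made to work, but not with the input you cite.} A $\sq^*$-coherent sequence with unbounded order types neither keeps $\mathbf b^C_x\restriction\gamma=\mathbf b^{C\cap\gamma}_x$ inside $T_\gamma$ nor yields $\square_{\omega_1}$. What you need is an $\omega_1$-bounded, genuinely $\sq$-coherent system of width $\le\aleph_1$ with $\acc(C)\cap S=\emptyset$ for every ladder $C$. Under $\ch$ this is immediate:
\begin{itemize}
\item at countable cofinality, take all countable closed cofinal subsets of $\alpha$ whose accumulation points miss $S$;
\item at cofinality $\omega_1$, take the tails of a single club of type $\omega_1$ whose accumulation points miss $S$, which exists by nonreflection.
\end{itemize}
Since no point of $S$ is then an accumulation point of a ladder, levels in $S$ can be chosen freely, and classical sealing with $\diamondsuit(S)$ goes through.

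The paper instead obtains $\p_{\aleph_1}(\aleph_2,\aleph_2,\sq,1,\{\aleph_2\},\aleph_2,1)$ from $\square^*_{\aleph_1}$ and the nonreflecting set. It then applies the uniform Theorem~\ref{pro1} with $\chi=\omega$, sealing along ladders rather than at a fixed stationary set. Your version is more elementary, but as clause~(2) shows, it does not extend to the complete case.
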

\begin{rmk}
We refer the reader to Corollary~\ref{cor4} for consequences beyond squares,
and to Section~\ref{sec4} in general for a whole gallery of applications.
\end{rmk}

We conclude the paper with a complementary result.
\begin{mainthm}\label{thme} Assuming the consistency of large cardinals, the conjunction of the following two bullet points is compatible with $\kappa$ being a successor of a regular uncountable,
a successor of a singular, or a strongly inaccessible.
\begin{itemize}
\item There is a $\kappa$-Souslin tree;
\item Every $\kappa$-Souslin tree forces that $\square(\kappa)$ fails.
\end{itemize}
\end{mainthm}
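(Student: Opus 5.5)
Theorem~\ref{thme} needs a model with a $\kappa$-Souslin tree in which no $\kappa$-Souslin tree can add a $\square(\kappa)$-sequence. The plan is to start from a model where $\square(\kappa)$ fails robustly, and to show that this failure survives forcing with any $\kappa$-Souslin tree. Separately, we add a $\kappa$-Souslin tree without destroying the failure.

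The natural robust failure is a strong reflection principle. Take $\kappa$ such that every stationary subset of $\kappa$ (or of $E^\kappa_{\omega}$) reflects, together with enough indestructibility. For instance, start with $\kappa$ weakly compact, or with a supercompact collapsed to become $\lambda^+$. The key preservation fact is the standard one: a $\kappa$-Souslin tree $T$ is $\kappa$-cc, and forcing with it adds no new sequences of length $<\kappa$ when $T$ is ${<}\kappa$-distributive. It is tempting to argue that a $\square(\kappa)$-sequence $\vec C$ added by $T$ has a ground-model name whose approximations form a coherent system indexed by nodes. The cleaner route, however, is the observation that $T\times T$ behaves well. If $T$ forced $\square(\kappa)$, then because $\square(\kappa)$ is witnessed by a threadless coherent sequence, one derives a ground-model object contradicting compactness. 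Concretely, I would use that in the relevant models $\kappa$ is weakly compact in an inner sense (or some forcing axiom holds), so that forcing with a $\kappa$-Souslin tree, which is $\kappa$-cc, preserves the reflection property that kills $\square(\kappa)$.

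The cleanest uniform scheme, covering the three cases, is the following. Let $\mathbb Q$ be a $\kappa$-Souslin tree added by a ${<}\kappa$-strategically closed forcing of size $\kappa$, adding the tree by initial segments. Start from $\kappa$ weakly compact for the inaccessible case, or from a supercompact $\theta$ collapsed to $\kappa=\lambda^+$ (via $\mathrm{Col}(\lambda,{<}\theta)$, or Prikry or Magidor style preparation for singular $\lambda$). In $V^{\mathbb P*\mathbb Q}$, let $T$ be any $\kappa$-Souslin tree, and suppose toward a contradiction that $\dot{\vec C}$ is a $T$-name for a $\square(\kappa)$-sequence. Take an elementary embedding $j$ with critical point $\kappa$ (resp. $\theta$) lifted through the preparation. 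Then $j(T)\restriction\kappa=T$, and any node $t$ of $j(T)$ at level $\kappa$ determines a branch $b$ through $T$. The name $j(\dot{\vec C})$ evaluated along $t$ gives a club $C_\kappa$ that threads $\dot{\vec C}$ as interpreted by the generic branch $b$. Since $b$ is generic over $V$ for $T$ (Souslinity makes branches in the outer model generic), we have found a thread in an extension by the quotient forcing. The remaining task is to show that this thread already lies in $V[b]$, using closure or approximation properties of the quotient, as in the standard argument that $\kappa$-cc followed by a sufficiently closed quotient adds no threads.

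The main obstacle is precisely this last step: lifting $j$ so that the quotient forcing cannot add a thread to a $\square(\kappa)$-sequence, uniformly for successors of singulars, where the collapse is less closed. I would first try the "square-like sequences are not threaded by $\lambda$-closed or $\lambda^+$-cc-times-closed forcing" lemma, for example in the style of the branch lemmas of Unger and Kunen. The Souslin tree itself would be supplied in each case by the proxy-principle constructions earlier in the paper, after checking that they hold in these models.
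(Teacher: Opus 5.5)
There is a genuine gap, and it sits exactly where you locate the main obstacle. That step is the whole content of the theorem, and the tools you propose for it do not work.

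\begin{itemize}
\item Stationary reflection is not what refutes $\square(\kappa)$; the two are known to be compatible, so preserving reflection gives no mechanism. The invariant that does refute $\square(\kappa)$, and even $\square(\kappa,{<}\kappa)$, is the tree property at $\kappa$ (Fact~\ref{square}).
\item Your embedding plan cannot be run inside $V^{\mathbb P*\mathbb Q}$. Lifting $j$ through $\mathbb Q$ needs a master condition, i.e., a level $\kappa$ on the generic tree. So you must also add branches through that tree, and you never analyze the resulting model.
\item You should not expect closure or approximation properties of the quotient to bring the thread down. A thread is a new set all of whose initial segments lie in the smaller model. That is exactly the kind of object that branch-adding forcings such as a Souslin tree or $\add(\kappa,1)$ do add.
\item Your last sentence is self-defeating. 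If $\p(\kappa,\kappa,\sq,1,\{\kappa\},\kappa,1)$ held, Corollary~\ref{cor5} would produce a $\kappa$-Souslin tree forcing $\square(\kappa)$. So these principles must fail in any model of the theorem.
\end{itemize}

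The missing idea is to route everything through $\mathbb S_\kappa*\dot T\cong\add(\kappa,1)$ (Fact~\ref{equivalent}). Here $\mathbb S_\kappa$ is Kunen's forcing with generic $G$ and generic tree $T$. Start from a model of $\kappa^{<\kappa}=\kappa$ in which the tree property at $\kappa$ is indestructible under $\add(\kappa,1)$. Such models exist in all three cases and serve as black boxes, so no embedding has to be lifted.

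In $V[G]$, take any $\kappa$-Souslin $T'$ with generic branch $b'$. There is a $T$-generic $b$ with $V[G][b']\s V[G][b]$. Indeed, the tree property holds in the $\add(\kappa,1)$-extension $V[G][b]$, so $T'$ acquires there a cofinal branch through any prescribed node. Every cofinal branch through a Souslin tree of $V[G]$ is $V[G]$-generic; this part of your proposal is right.

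Now let $\vec C\in V[G][b']$ be a $\square(\kappa)$-sequence.
\begin{itemize}
\item $\vec C$ has a thread $D\in V[G][b]$.
\item Since $V[G][b]$ is a $\kappa$-cc extension of $V[G]$, $D$ contains a club $D'\in V[G]\s V[G][b']$.
\item For $\alpha<\beta$ in $\acc(D')$, we have $D'\cap\beta\s D\cap\beta=C_\beta$. This gives $\alpha\in\acc(C_\beta)$, hence $C_\alpha=C_\beta\cap\alpha$.
\item So $\bigcup_{\alpha\in\acc(D')}C_\alpha$ is a thread lying in $V[G][b']$. For sequences of width ${<}\mu$ the paper uses \cite[Lemma~2.4]{MR3730566} at this point; see Lemma~\ref{squarefail}.
\end{itemize}
It is this $\kappa$-cc pull-back of a subclub to $V[G]$, not any property of the quotient, that brings the thread down.
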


\subsection{Organization of this paper}
In Section~\ref{sec2}, we provide preliminaries on square and proxy principles.

In Section~\ref{sec3}, we prove the main technical result of this paper: constructing a $\kappa$-Souslin tree $\mathbf T$ from an instance $\p_\xi(\kappa,\kappa,\pvec)$ of the proxy principle
such that forcing with $\mathbf T$ adds the corresponding narrow instance $\p_\xi(\kappa,2,\pvec)$.

In Section~\ref{sec4}, we derive various corollaries, including Theorems \ref{thma}---\ref{thmd}.
The reader may want to start off by reading this section, as it motivates the technical result of Section~\ref{sec3}.

In Section~\ref{sec5}, we prove a strong form of Theorem~\ref{thme}.

\subsection{Notation and conventions}
Throughout the paper, $\kappa$ stands for a regular uncountable cardinal.
As they play the role of notions of forcing,
all $\kappa$-Souslin trees in this paper are understood to be \emph{normal trees},
that is, every node in a $\kappa$-Souslin tree must have $\kappa$ many extensions in the tree.\footnote{Generally speaking, for every $\kappa$-Souslin tree
there is an $\varepsilon<\kappa$ such that each of its nodes of height $\ge\varepsilon$ admits $\kappa$-many extensions in the tree. Normality imposes $\varepsilon:=0$.}
For a set of ordinals $C$ and an ordinal $\sigma$, we write $\acc(C):=\{\alpha\in C\mid \sup(C\cap\alpha)=\alpha>0\}$, $\nacc(C):=C\setminus\acc(C)$,
and $\suc_\sigma(C) := \{ \alpha\in C\mid \otp(C\cap\alpha)\text{ is a successor ordinal}\le\sigma\}$.

\section{Square and proxy principles}\label{sec2}
A \emph{$\mathcal C$-sequence} over $\kappa$ is a sequence $\vec{\mathcal C}=\langle\mathcal C_\alpha\mid\alpha<\kappa\rangle$ such that for every $\alpha<\kappa$,
$\mathcal C_\alpha$ is nonempty collection of closed subsets $C$ of $\alpha$ with $\sup(C)=\sup(\alpha)$.
It is said to be \emph{$\xi$-bounded} provided that $\otp(C)\le\xi$ for every $C\in\bigcup_{\alpha<\kappa}\mathcal C_\alpha$.
It is said to be \emph{unthreadable} provided that for every club $D$ in $\kappa$, there is an $\alpha\in\acc(D)$ such that $D\cap\alpha\notin\mathcal C_\alpha$.\footnote{If $\vec{\mathcal C}$ is $\xi$-bounded for some $\xi<\kappa$, then $\vec{\mathcal C}$ is unthreadable.}
It is said to be \emph{$\mathcal R$-coherent} (for any given binary relation $\mathcal R$)
provided that for all $C\in\bigcup_{\alpha<\kappa}\mathcal C_\alpha$ and $\beta\in\acc(C)$,
there is a $\bar C\in\mathcal C_\beta$ with $\bar C\mathrel{\mathcal R}C$.
The main examples of $\mathcal R$ here are the end-extension relation $\sq$ and its variant $\sq_\chi$ that focuses on clubs of high order-type; they are defined as follows:
\begin{itemize}
\item $\bar C\sq C$ iff there is an ordinal $\beta$ such that $\bar C=C\cap\beta$;
\item $\bar C \sq_\chi C$ iff either $\bar C \sqsubseteq C$ or ($\otp(C)<\chi$ and $\nacc(C)$ consists of successor ordinals).
\end{itemize}

\begin{defn}
\begin{itemize}
\item A \emph{$C$-sequence over $\kappa$} is one $\langle C_\alpha\mid\alpha<\kappa\rangle$ for which
$\langle \{C_\alpha\}\mid\alpha<\kappa\rangle$ is a $\mathcal C$-sequence over $\kappa$;
\item Jensen's $\square_\lambda$ asserts the existence of a $\lambda$-bounded $\sq$-coherent $C$-sequence over $\lambda^+$;
\item Baumgartner's $\square^B_\lambda$ asserts the existence of a $\lambda$-bounded $\sq_\lambda$-coherent $C$-sequence over $\lambda^+$;
\item Todor{\v{c}}evi{\'c}'s $\square(\kappa)$ asserts the existence of an unthreadable $\sq$-coherent $C$-sequence over $\kappa$.
Likewise, $\square(\kappa,{<}\mu)$ asserts the existence of an unthreadable $\sq$-coherent $\mathcal C$-sequence $\langle\mathcal C_\alpha\mid\alpha<\kappa\rangle$
such that $|\mathcal C_\alpha|<\mu$ for every $\alpha<\kappa$.
\end{itemize}
\end{defn}
\begin{remark}\label{rem1}
We mentioned in the introduction that $\square_{\aleph_1}$ and $\square^B_{\aleph_1}$ are logically equivalent.
Note that, more generally,
the existence of a $\lambda$-bounded $\sq_{\aleph_1}$-coherent $C$-sequence over $\lambda^+$
implies that $\square_{\lambda}$ holds.
Indeed, given such a sequence $\langle C_\alpha\mid\alpha<\lambda^+\rangle$,
for every $\alpha\in E^{\lambda^+}_\omega$, if there exists a $\beta\in E^{\lambda^+}_{>\omega}$ such that $\alpha\in\acc(C_\beta)$,
then replace $C_\alpha$ by $C_\beta\cap\alpha$, and otherwise, replace $C_\alpha$ by a cofinal subset of $\alpha$ of order-type $\omega$.
The outcome of this systematic modification at ordinals of countable cofinality is a $\square_\lambda$-sequence.
\end{remark}

We now present a special case of the parametrized proxy principle $\p^-(\ldots)$.
For a discussion on the motivations behind these principles and a comparison with the square and diamond principles,
we refer the reader to \cite[\S2]{paper65}.

\begin{defn}[special case of {\cite[Definition~1.5]{paper22}}]
Suppose:
\begin{itemize}
\item $\xi,\sigma,\chi\le\kappa$ are ordinals;
\item $\mu,\theta\le\kappa$ are cardinals;
\item $\mathcal S$ is a family of stationary subsets of $\kappa$.
\end{itemize}
$\p^-_\xi(\kappa,\mu,\sq_\chi,\allowbreak\theta,\mathcal S,\mu,\sigma)$ asserts the existence of a $\xi$-bounded $\sq_\chi$-coherent $\mathcal C$-sequence $\vec{\mathcal C}=\langle\mathcal C_\alpha\mid\alpha<\kappa\rangle$ such that the two hold:
\begin{enumerate}
\item for every $\alpha<\kappa$, $|\mathcal C_\alpha|<\mu$;
\item\label{hittingeqn}
for every sequence $\langle B_i\mid i<\theta\rangle$ of cofinal subsets of $\kappa$,
for every $S\in\mathcal S$, there are stationarily many $\alpha\in S$ such that,
for every $i<\min\{\alpha,\theta\}$,
for every $C\in\mathcal C_\alpha$,
$\sup\{\gamma\in C\mid \suc_\sigma(C\setminus\gamma)\s B_i\}=\alpha$.
\end{enumerate}
\end{defn}
\begin{remark}\label{remark-succ}
In the special case $\sigma=1$, the ending of requirement~\eqref{hittingeqn} is
equivalent to asserting that $\sup(\nacc(C) \cap B_i) = \alpha$.
\end{remark}

\begin{defn} $\p_\xi(\kappa,\mu,\sq_\chi,\allowbreak\theta,\mathcal S,\mu,\sigma)$ asserts that
$\p^-_\xi(\kappa,\mu,\sq_\chi,\allowbreak\theta,\mathcal S,\mu,\sigma)$ and $\diamondsuit(\kappa)$ both hold.
\end{defn}

\begin{conv} We may omit $\xi$ in which case we mean that $\xi:=\kappa$.
\end{conv}

\begin{fact}[\cite{TodActa}]\label{square}
$\square(\kappa,{<}\kappa)$ entails the existence of a $\kappa$-Aronszajn tree.
\end{fact}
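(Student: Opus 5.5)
The plan is to follow Todor\v{c}evi\'c's method of walks on ordinals. Fix $\langle\mathcal C_\alpha\mid\alpha<\kappa\rangle$ witnessing $\square(\kappa,{<}\kappa)$, and choose a single $C_\alpha\in\mathcal C_\alpha$ for every $\alpha<\kappa$. Walks along $\langle C_\alpha\mid\alpha<\kappa\rangle$ then give the full code $\rho_0(\cdot,\beta)\colon\beta\to{}^{<\omega}\kappa$. Here, for $\xi<\beta$, $\rho_0(\xi,\beta)=\langle\otp(C_{\beta_i}\cap\xi)\mid i<n\rangle$, where $\beta=\beta_0>\beta_1>\cdots>\beta_n=\xi$ and $\beta_{i+1}=\min(C_{\beta_i}\setminus\xi)$. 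The candidate tree is
\[
T:=\{\rho_0(\cdot,\beta)\restriction\alpha\mid \alpha\le\beta<\kappa\},
\]
ordered by end-extension. The $\alpha$-th level consists of the functions with domain $\alpha$, and $T$ has height $\kappa$ because $\rho_0(\cdot,\beta)$ is defined on all of $\beta$. It then remains to verify two facts, each of size ${<}\kappa$-related: every level has size ${<}\kappa$, and there is no cofinal branch.

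For the level bound, fix $\alpha<\beta$ and walk from $\beta$ towards $\alpha$. Let $\beta_k$ be the last step of the walk that is $\ge\alpha$. If $\beta_k>\alpha$, then $\alpha\in\acc(C_{\beta_k})$, so by $\sq$-coherence $D:=C_{\beta_k}\cap\alpha\in\mathcal C_\alpha$. For $\xi<\alpha$, the walk from $\beta$ to $\xi$ agrees with the walk to $\alpha$ through the steps $\beta_0,\dots,\beta_{k-1}$ as long as $\xi>\max_{i<k}\sup(C_{\beta_i}\cap\alpha)$. Below this threshold, and also inside $D$, the walk is governed by the fixed ladders $C_\gamma$ for $\gamma<\alpha$, together with the single set $D\in\mathcal C_\alpha$. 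Hence $\rho_0(\cdot,\beta)\restriction\alpha$ is determined by three pieces of data: the finite sequence $\rho_0(\alpha,\beta)\restriction k$, whose entries are order-types of bounded subsets of $\alpha$ and so are $\le\alpha$; finitely many thresholds $\delta_i=\sup(C_{\beta_i}\cap\alpha)<\alpha$; and the element $D\in\mathcal C_\alpha$. Inducting on $\alpha$ handles the pieces below the thresholds, since there the walk re-enters previously analysed levels $\delta_i$. This yields $|T_\alpha|\le|\alpha|^{<\omega}\cdot|\mathcal C_\alpha|\cdot\aleph_0+\sum_{\delta<\alpha}|T_\delta|$, which is ${<}\kappa$ because $|\mathcal C_\alpha|<\kappa$ and $\kappa$ is regular. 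I expect this to be the main obstacle. The delicate point is that the segment of the walk between a threshold $\delta_i$ and $\alpha$ is described by $C_{\beta_i}\cap(\delta_i+1)$, and I must make sure this is governed by the already-counted data rather than by an arbitrary closed bounded set. If $\delta_i\in\acc(C_{\beta_i})$, coherence puts $C_{\beta_i}\cap\delta_i$ into $\mathcal C_{\delta_i}$; otherwise $\delta_i$ is a successor point and one reduces to the previous accumulation point. So the induction hypothesis must be phrased for all such restrictions simultaneously, and this is where care is needed.

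For the absence of a cofinal branch, suppose $b$ is one, and let $f=\bigcup b\colon\kappa\to{}^{<\omega}\kappa$. For each $\alpha$, pick $\beta_\alpha\ge\alpha$ with $f\restriction\alpha=\rho_0(\cdot,\beta_\alpha)\restriction\alpha$. The first coordinates $f(\xi)(0)=\otp(C_{\beta_\alpha}\cap\xi)$ must then cohere along the branch. More usefully, the sets $E_\alpha:=\{\xi<\alpha\mid \rho_0(\xi,\beta_\alpha)\text{ has length }1\}=C_{\beta_\alpha}\cap\alpha$ are end-extensions of one another, so their union $E$ is closed. Using Fodor's lemma and the fact that the lengths take only countably many values, I will show that $E$, or its analogue at the first coordinate of the walk where unboundedness occurs, is unbounded. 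Then $E$ is a club $D$ such that for each $\alpha\in\acc(D)$ we have $D\cap\alpha=C_{\beta}\cap\alpha$ for some $\beta>\alpha$ with $\alpha\in\acc(C_\beta)$, hence $D\cap\alpha\in\mathcal C_\alpha$ by coherence. This contradicts unthreadability, so $T$ is a $\kappa$-Aronszajn tree.
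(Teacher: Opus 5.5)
\paragraph{Verdict.} You are following the standard route: the tree $T(\rho_0)$ of full codes along a choice $C_\alpha\in\mathcal C_\alpha$. The paper gives no proof of this Fact and simply cites Todor\v{c}evi\'c. Your level bound is essentially right, but the branch argument has a gap at its final step.

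\paragraph{The gap in the branch argument.} You concede that $E=\{\xi\mid f(\xi)\text{ has length }1\}$ may be bounded. You then assert that its ``analogue'' is a club $D$ with $D\cap\alpha=C_\beta\cap\alpha$ for some $\beta\ge\alpha$. The analogue does not give this. Spelled out, it goes as follows.
\begin{enumerate}
\item Put $E^{(m)}:=\{\xi<\kappa\mid f(\xi)\text{ has length }m+1\}$. Since $\cf(\kappa)>\omega$, some $E^{(m)}$ is unbounded; this is all that is needed, and Fodor plays no role.
\item Let $n$ be least with $E^{(n)}$ unbounded, and fix $\eta<\kappa$ with $\bigcup_{m<n}E^{(m)}\s\eta+1$.
\item Put $\gamma^0_\alpha:=\beta_\alpha$. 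By induction on $j<n$, for every $\alpha>\eta$ the first $n$ steps $\gamma^1_\alpha,\dots,\gamma^n_\alpha$ of the walk from $\beta_\alpha$ to any $\xi\in(\eta,\alpha)$ do not depend on $\xi$ and are $\ge\alpha$. The reason is that any $\xi\in C_{\gamma^j_\alpha}\cap(\eta,\alpha)$ would lie in $E^{(j)}$, which is impossible for $j<n$.
\item Hence $D:=E^{(n)}\setminus(\eta+1)$ is a club with $D\cap\alpha=C_{\gamma^n_\alpha}\cap(\eta,\alpha)$.
\end{enumerate}
For $n>0$, $D\cap\alpha$ is an element of $\mathcal C_\alpha$ (by coherence, when $\alpha\in\acc(D)$) \emph{with its initial segment $Y_\alpha:=C_{\gamma^n_\alpha}\cap(\eta+1)$ removed}. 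Nothing makes $\mathcal C_\alpha$ closed under tails, so $D$ need not thread $\vec{\mathcal C}$, and your concluding ``hence $D\cap\alpha\in\mathcal C_\alpha$'' is unjustified.

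\paragraph{The repair.} It uses narrowness a second time.
\begin{enumerate}
\item Let $\alpha_0:=\min(\acc(D))$; note $\alpha_0>\eta$. For $\alpha\in\acc(D)$ above $\alpha_0$, $Y_\alpha$ is read off from $C_{\gamma^n_\alpha}\cap\alpha_0$, which lies in $\mathcal C_{\alpha_0}$ by coherence.
\item Since $|\mathcal C_{\alpha_0}|<\kappa=\cf(\kappa)$, a single $Y$ equals $Y_\alpha$ for cofinally many such $\alpha$.
\item Then $Y\cup D$ is a club. For $\bar\alpha\in\acc(Y\cup D)$ and any such $\alpha>\bar\alpha$, we have $(Y\cup D)\cap\alpha=C_{\gamma^n_\alpha}\cap\alpha$. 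So $\bar\alpha\in\acc(C_{\gamma^n_\alpha})$, and coherence gives $(Y\cup D)\cap\bar\alpha\in\mathcal C_{\bar\alpha}$.
\end{enumerate}
This is the thread that contradicts unthreadability.

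\paragraph{The level bound.} You single this out as the main obstacle, but it is fine in substance and easier than you fear. The case split is misstated, however: the walk from $\beta$ to $\alpha$ ends at $\alpha$, so its last step $\ge\alpha$ is always $\alpha$ itself. What matters is the penultimate step $\beta_{n-1}$:
\begin{itemize}
\item[$\bullet$] if $\alpha\in\acc(C_{\beta_{n-1}})$, walks to $\xi$ just below $\alpha$ leave $\beta_{n-1}$ through $D:=C_{\beta_{n-1}}\cap\alpha\in\mathcal C_\alpha$;
\item[$\bullet$] otherwise they pass through $\alpha$ and continue along $C_\alpha$.
\end{itemize}
Let $\delta<\alpha$ be the maximum of the relevant thresholds $\sup(C_{\beta_i}\cap\alpha)$. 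On $(\delta,\alpha)$, $\rho_0(\cdot,\beta)$ is determined by $\rho_0(\alpha,\beta)$ together with $C_\alpha$ or $D$. Meanwhile $\rho_0(\cdot,\beta)\restriction(\delta+1)$ is simply a node of $T_{\delta+1}$. So no analysis of $C_{\beta_i}\cap(\delta_i+1)$ is needed. For limit $\alpha$ this gives $|T_\alpha|\le\sum_{\delta<\alpha}|T_{\delta+1}|\cdot|\alpha|^{<\omega}\cdot(|\mathcal C_\alpha|+\aleph_0)<\kappa$ by induction; successor levels are immediate.
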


\begin{fact}[{\cite[Proposition~2.2]{paper26}}] $\p(\kappa,\kappa,{\sq},1,\{\kappa\},\kappa,1)$ entails the existence of a $\kappa$-Souslin tree.
\end{fact}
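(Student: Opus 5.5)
The plan is to run the standard microscopic construction of a normal $\kappa$-Souslin tree $T\subseteq{}^{<\kappa}2$ by recursion on levels, guided by the $\mathcal C$-sequence $\langle C_\alpha\mid\alpha<\kappa\rangle$ witnessing $\p^-(\kappa,\kappa,\sq,1,\{\kappa\},\kappa,1)$ together with a $\diamondsuit(\kappa)$-sequence $\langle A_\beta\mid\beta<\kappa\rangle$. Since $\mu=\kappa$ here, each $\mathcal C_\alpha$ is a family of fewer than $\kappa$ clubs. First, using $\diamondsuit(\kappa)$, I will fix a well-ordering $\lhd$ of $H_\kappa$ and a sequence $\langle\Omega_\beta\mid\beta<\kappa\rangle$ of subsets of $H_\kappa$ guessing every subset of $H_\kappa$ on a stationary set. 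Note that $\diamondsuit(\kappa)$ gives $\kappa^{<\kappa}=\kappa$, so the levels can be kept small.

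The recursion goes as follows, with each level $T_\alpha$ of size ${<}\kappa$.
\begin{itemize}
\item At successor stages, take all one-point extensions $t^\frown\langle i\rangle$ for $i<2$.
\item At limit $\alpha$, for each $C\in\mathcal C_\alpha$ and each node $x\in T\restriction\alpha$, define a branch $b^{C,x}_\alpha$ by walking along $C$. Start at $x$, and at each $\beta\in\nacc(C)$ above $\operatorname{dom}(x)$ extend the current node by $\extend_\beta$: the $\lhd$-least node of $T_\beta$ extending it that lies above some element of $\Omega_\beta$, provided $\Omega_\beta$ is a maximal antichain of $T\restriction\beta$; otherwise take the $\lhd$-least extension. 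At accumulation points $\beta\in\acc(C)$, take the limit.
\item Set $T_\alpha:=\{\bigcup b^{C,x}_\alpha\mid C\in\mathcal C_\alpha,\ x\in T\restriction C\text{-relevant}\}$.
\end{itemize}

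The key invariant is that every node constructed at a limit $\beta$ along the walk is really in $T_\beta$. This follows from $\sq$-coherence: for $\beta\in\acc(C)$ there is some $\bar C\in\mathcal C_\beta$ with $\bar C=C\cap\beta$, so $b^{C,x}_\alpha\restriction\beta=b^{\bar C,x}_\beta$ was already put into $T_\beta$. This is exactly why families $\mathcal C_\alpha$ with coherence are allowed rather than a single club. Normality and $|T_\alpha|<\kappa$ are then routine, since $|\mathcal C_\alpha|<\kappa$ and $|T\restriction\alpha|<\kappa$.

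For the Souslin property, let $A$ be a maximal antichain. The set of $\beta$ where $A\cap(T\restriction\beta)$ is a maximal antichain of $T\restriction\beta$ is a club $D$. Using $\diamondsuit$, the set $B:=\{\beta\in D\mid\Omega_\beta=A\cap(T\restriction\beta)\}$ is stationary, hence cofinal. Apply clause~(2) with $\theta=1$, $B_0:=B$ and $S=\kappa$, together with Remark~\ref{remark-succ}. This yields stationarily many $\alpha\in D$ such that $\sup(\nacc(C)\cap B)=\alpha$ for every $C\in\mathcal C_\alpha$. At such an $\alpha$, every node of $T_\alpha$ passes through some $\extend_\beta$ step with $\beta\in B$, so it extends an element of $A\cap(T\restriction\alpha)$. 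Hence $A\subseteq T\restriction\alpha$, and $|A|<\kappa$.

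The main obstacle is the bookkeeping in the limit step. The nodes $b^{C,x}_\alpha$ must be defined uniformly, depending only on $C$ and $x$, so that coherence makes $T_\beta$ contain the initial segments. This is also where the guessing at $\nacc$ points must be combined with "sealing" antichains. I would follow the construction of \cite[Proposition~2.2]{paper26} for precisely this uniform definition, with the reading that this Fact is cited from that source.
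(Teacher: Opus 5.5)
Your proposal is correct and follows essentially the same route as the cited construction. It is also the special case $\chi=\omega$, $\theta=\sigma=1$, $\mathcal S=\{\kappa\}$ of the construction in the proof of Theorem~\ref{pro1}: limit-level nodes are limits of walks along $C\in\mathcal C_\alpha$ that are diamond-guided at $\nacc(C)$, $\sq$-coherence places the limits at $\acc(C)$ into earlier levels, and the hitting property gives an $\alpha$ at which every node of $T_\alpha$ extends a member of $A$. The one point you leave implicit is ruling out a $\kappa$-branch, which follows immediately from binary splitting at successor levels together with the absence of $\kappa$-sized antichains, exactly as in the paper's claim that $\mathbf T$ admits no chains of size $\kappa$.
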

\section{The example}\label{sec3}
\subsection{Setup} Write $H_\kappa$ for the collection of all sets of hereditary cardinality less than $\kappa$,
and fix a well-ordering $\lhd_\kappa$ of $H_\kappa$.
The trees we construct here are of the following simple form.
\begin{defn} A $\kappa$-tree $\mathbf T=(T,{<_T})$ is said to be \emph{streamlined} iff all of the following hold:
\begin{itemize}
\item $<_T$ is $\subsetneq$;
\item $T$ is a subset of ${}^{<\kappa}H_\kappa$;\footnote{Together with the previous bullet point this means that $x<_Ty$ iff $x$ is an initial segment of $y$.}
\item for all $t\in T$ and $\alpha<\dom(t)$, $t\restriction\alpha\in T$.
\end{itemize}
\end{defn}

In order to seal antichains in streamlined $\kappa$-trees, we shall make use of the following verbose version of $\diamondsuit(\kappa)$.

\begin{fact}[{\cite[Lemma~2.2]{paper22}}]\label{90}
$\diamondsuit(\kappa)$ is equivalent to the existence of a sequence $\langle \Omega_\beta\mid\beta<\kappa\rangle$ satisfying that
for all $\Omega\s H_\kappa$ and $p\in H_{\kappa^+}$, there exists an elementary submodel $\mathcal M\prec H_{\kappa^+}$ containing $p$,
such that $\mathcal M\cap\kappa\in\kappa$ and $\mathcal M\cap\Omega=\Omega_{\mathcal M\cap\kappa}$.
\end{fact}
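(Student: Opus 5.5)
The backward direction is a direct check, and the forward direction codes subsets of $H_\kappa$ by subsets of $\kappa$ through a fixed enumeration of $H_\kappa$.

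\emph{Backward direction.} Suppose $\langle\Omega_\beta\mid\beta<\kappa\rangle$ is as in the statement, and put $A_\beta:=\Omega_\beta\cap\beta$. Let $A\s\kappa$ and let $D\s\kappa$ be a club.
\begin{itemize}
\item Apply the hypothesis with $\Omega:=A$ and $p:=(A,D)$. This gives $\mathcal M\prec H_{\kappa^+}$ with $p\in\mathcal M$ and $\beta:=\mathcal M\cap\kappa\in\kappa$.
\item Then $A\cap\beta=\mathcal M\cap A=\Omega_\beta$, so $A_\beta=A\cap\beta$.
\item Since $D\in\mathcal M$ is unbounded in $\kappa$, elementarity shows that for every $\alpha<\beta$ there is a $\delta\in D\cap\mathcal M$ above $\alpha$, and such a $\delta$ lies in $\mathcal M\cap\kappa=\beta$. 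So $D\cap\beta$ is cofinal in $\beta$.
\item By closedness of $D$, $\beta\in D$.
\end{itemize}
Hence $\{\beta<\kappa\mid A\cap\beta=A_\beta\}$ meets every club, so $\langle A_\beta\mid\beta<\kappa\rangle$ is a $\diamondsuit(\kappa)$-sequence.

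\emph{Forward direction.} Fix a $\diamondsuit(\kappa)$-sequence $\langle A_\beta\mid\beta<\kappa\rangle$.
\begin{itemize}
\item \textbf{Enumerating $H_\kappa$.} First, $\diamondsuit(\kappa)$ implies $2^{\lambda}\le\kappa$ for all $\lambda<\kappa$: every subset of $\lambda$ is guessed as some $A_\beta\cap\lambda$. Since $\kappa$ is regular, it follows that $|H_\kappa|=\kappa$, so we may fix a bijection $\pi:\kappa\to H_\kappa$. Define $\Omega_\beta:=\pi[A_\beta]$.
\item \textbf{A chain of models.} Given $\Omega\s H_\kappa$ and $p\in H_{\kappa^+}$, let $X:=\pi^{-1}[\Omega]\s\kappa$. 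Build a $\s$-increasing continuous chain $\langle\mathcal M_i\mid i<\kappa\rangle$ of elementary submodels of $H_{\kappa^+}$, each of size ${<}\kappa$, with $p,\pi,\Omega\in\mathcal M_0$ and $i\cup\{\mathcal M_i\}\s\mathcal M_{i+1}$.
\item \textbf{A club of good indices.} Since each $\mathcal M_i$ has size ${<}\kappa$ and $\kappa$ is regular, the usual closure argument shows that $E:=\{i<\kappa\mid\mathcal M_i\cap\kappa=i\}$ contains a club.
\end{itemize}

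For $\beta\in E$ we claim $\mathcal M_\beta\cap H_\kappa=\pi[\beta]$.
\begin{itemize}
\item If $x\in\mathcal M_\beta\cap H_\kappa$, then $\pi\in\mathcal M_\beta$ gives $\pi^{-1}(x)\in\mathcal M_\beta\cap\kappa=\beta$.
\item Conversely, if $\alpha<\beta$, then $\alpha\in\mathcal M_\beta$, so $\pi(\alpha)\in\mathcal M_\beta$.
\end{itemize}
Consequently, for $\beta\in E$ we have $\mathcal M_\beta\cap\Omega=\pi[X\cap\beta]$.

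Now pick $\beta\in E$ with $X\cap\beta=A_\beta$; this is possible because the guessing set is stationary and $E$ contains a club. Then $\mathcal M:=\mathcal M_\beta$ contains $p$, satisfies $\mathcal M\cap\kappa=\beta$, and $\mathcal M\cap\Omega=\pi[A_\beta]=\Omega_\beta$.

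The only step needing care is the bookkeeping that yields $\mathcal M\cap\kappa\in\kappa$ on a club, together with the preliminary cardinal arithmetic $|H_\kappa|=\kappa$. Both are standard.
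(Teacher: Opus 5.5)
Your proof is correct and follows essentially the standard argument behind the cited lemma; the paper does not prove this fact itself but quotes it from its reference. The argument uses $2^{<\kappa}=\kappa$ to fix a bijection $\pi:\kappa\to H_\kappa$, transfers the $\diamondsuit(\kappa)$-sequence along $\pi$, and picks a guessing point on the club of $\beta$ with $\mathcal M_\beta\cap\kappa=\beta$, where automatically $\mathcal M_\beta\cap H_\kappa=\pi[\beta]$. The one detail you leave implicit, in the backward direction, is that $\beta=\mathcal M\cap\kappa$ is a nonzero limit ordinal (because $0\in\mathcal M$ and $\mathcal M$ is closed under ordinal successor), and this is what lets closedness of $D$ give $\beta\in D$.
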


The next definition makes use of a fixed sequence $\langle \Omega_\beta\mid\beta<\kappa\rangle$ as in Fact~\ref{90}.
\begin{defn}[canonical extension action]\label{extendaction}
For every $T\in H_\kappa$, denote $\beta(T):=0$ unless there is a $\beta<\kappa$
such that $T\s{}^{\le\beta}H_\kappa$ and $T \nsubseteq {}^{<\beta}H_\kappa$,
in which case, we do the following:
\begin{itemize}
\item let $\beta(T)$ denote the unique $\beta$ such that $T\s{}^{\le\beta}H_\kappa$ and $T \nsubseteq {}^{<\beta}H_\kappa$.
\item let $T_{\beta(T)}:=\{ t\in T\mid \dom(t)=\beta(T)\}$.
\item for all $s\in T$ and $C\s\beta(T)$, if $\sup(C)<\dom(s)$, consider the sets:
\begin{itemize}
\item $Q_0:=\{ t\in T_{\beta(T)}\mid s\s t\}$, and
\item $Q_1:=\{ t\in T_{\beta(T)}\mid \exists r\in\Omega_\beta( s\cup r\s t)\}$.
\end{itemize}
If $Q_1$ is nonempty, then let $\extend(s,T,C):=\min(Q_1, {\lhd_\kappa})$.
If $Q_1$ is empty but $Q_0$ is nonempty, then let $\extend(s,T,C):=\min(Q_0, {\lhd_\kappa})$.
Otherwise, let $\extend(s,T,C):=s$.
\item for all $s\in T$ and $C\s\beta(T)$, if $\sup(C)=\dom(s)$, consider the sets:
\begin{itemize}
\item $Q_0:=\{ t\in T_{\beta(T)}\mid (s{}^\smallfrown\langle C\rangle)\s t\}$, and
\item $Q_1:=\{ t\in T_{\beta(T)}\mid \exists r\in\Omega_\beta( (s{}^\smallfrown\langle C\rangle)\cup r\s t)\}$.
\end{itemize}
If $Q_1$ is nonempty, then let $\extend(s,T,C):=\min(Q_1, {\lhd_\kappa})$.
If $Q_1$ is empty but $Q_0$ is nonempty, then let $\extend(s,T,C):=\min(Q_0, {\lhd_\kappa})$.
Otherwise, let $\extend(s,T,C):=\extend(s,T,\emptyset)$.
\end{itemize}
\end{defn}

\subsection{The construction}
The next theorem is the main result of this paper. On first reading, we recommend focusing on the special case where
$\chi:=\omega$, $\theta:=1$, $\mathcal S:=\{\kappa\}$ and $\sigma:=1$ (bearing in mind Remark~\ref{remark-succ}).
For the purpose of getting a $\lambda^+$-Souslin tree that adds $\square_\lambda$, also take $\xi:=\lambda$ and $\kappa:=\lambda^+$.

\begin{thm}\label{pro1} Suppose:
\begin{itemize}
\item $\chi$ is an infinite cardinal such that $\nu^{<\chi}<\kappa$ for every $\nu<\kappa$;
\item $\xi,\sigma$ are nonzero ordinals $\le\kappa$;
\item $\theta$ is a nonzero cardinal $\le\kappa$;
\item $\mathcal S$ is a nonempty family of stationary $S\s\kappa$ such that $S\setminus E^\kappa_{<\chi}$ is nonstationary;
\item $\p_\xi(\kappa,\mu,\sq_\chi,\theta,\mathcal S,\mu,\sigma)$ holds with $\mu=\kappa$.
\end{itemize}

Then there exists a streamlined $\chi$-complete $\kappa$-Souslin tree $\mathbf T$
such that in the forcing extension by $\mathbf T$,
$\p_\xi(\kappa,\mu,\sq_\chi,\theta,\mathcal S,\mu,\sigma)$ holds with $\mu=2$.
\end{thm}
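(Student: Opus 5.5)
Fix a $\p^-_\xi(\kappa,\kappa,\sq_\chi,\theta,\mathcal S,\kappa,\sigma)$-sequence $\vec{\mathcal C}=\langle\mathcal C_\alpha\mid\alpha<\kappa\rangle$ and a sequence $\langle\Omega_\beta\mid\beta<\kappa\rangle$ as in Fact~\ref{90}. The plan is to build $T=\bigcup_{\alpha<\kappa}T_\alpha$ by recursion on levels so that every node $t\in T_\alpha$ with $\alpha$ limit \emph{codes a club}. Along a branch, a node records at certain coordinates the sets $C$ it has chosen, via the $s{}^\smallfrown\langle C\rangle$ clause of $\extend$. From $t$ one can read off a single set $C^t\in\mathcal C_\alpha$. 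A generic branch $b=\bigcup G$ then yields a $C$-sequence $\langle C^{b\restriction\alpha}\mid\alpha<\kappa\rangle$, and the point is that $|\{C^{b\restriction\alpha}\}|=1<2$. Successor levels are routine: $T_{\alpha+1}:=\{t{}^\smallfrown\langle i\rangle\mid t\in T_\alpha,\ i<\max\{\omega,\alpha\}\}$, which gives normality and $|T_{\alpha+1}|<\kappa$.

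At a limit $\alpha$, for each $C\in\mathcal C_\alpha$ and each $x\in T\restriction C$ we form a branch $b^{C}_x$ by walking up $C$. We start with $x$, and at $\gamma\in\nacc(C)$ we take $\extend(\cdot,T\restriction(\gamma+1),C\cap\gamma)$ to the next point, thereby writing $C\cap\gamma$ into the node. At accumulation points $\beta\in\acc(C)$ we need the limit to already lie in $T_\beta$. This is where $\sq_\chi$-coherence enters: either $C\cap\beta\in\mathcal C_\beta$, so by induction the limit equals $b^{C\cap\beta}_x\in T_\beta$, or $\otp(C)<\chi$ with $\nacc(C)$ successors, in which case we instead use $\chi$-completeness. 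We therefore put into $T_\alpha$ all limits of $\le$ length-$<\chi$ increasing sequences when $\cf(\alpha)<\chi$. The hypothesis $\nu^{<\chi}<\kappa$ keeps $|T_\alpha|<\kappa$. Let $T_\alpha$ be the set of all $b^C_x$, plus these $\chi$-closure limits. In each such node we define $C^t:=C$ for the $C$ used. We must also handle the closure limits, choosing $C^t$ so that $C^t\in\mathcal C_\alpha$ still holds, or else showing the generic branch avoids them on a club.

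Next come the verifications. (i) The tree is a normal $\chi$-complete $\kappa$-tree. (ii) Souslinity: given a maximal antichain $A$, use Fact~\ref{90} with $\Omega:=A$ to find $\mathcal M$ with $\alpha=\mathcal M\cap\kappa$ in a set where, by clause~\eqref{hittingeqn} applied to $B_0:=\{\gamma\mid A\cap T\restriction\gamma \text{ maximal in } T\restriction\gamma\}$ with $\sigma$-successors landing in $B_0$, the $\Omega_\gamma$-clause of $\extend$ fires cofinally often along every $C\in\mathcal C_\alpha$. Every $b^C_x$ then extends an element of $A$. Closure limits at $\cf(\alpha)<\chi$ need care; I would run the argument at $\alpha\in S$, using that $S\setminus E^\kappa_{<\chi}$ is nonstationary to get the right cofinalities. (iii) In $V[G]$, the sequence $\vec D$ with $D_\alpha:=C^{b\restriction\alpha}$ is $\xi$-bounded and $\sq_\chi$-coherent, since $C^{b\restriction\beta}$ was built as $C^{b\restriction\alpha}\cap\beta$ or falls into the small-order-type case. (iv) Hitting holds in $V[G]$: given names for $\langle B_i\rangle$, a density argument using $\kappa$-cc and a club of $\alpha$ closing the names reduces this to the ground-model hitting clause for $\mathcal C_\alpha$. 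Since $D_\alpha\in\mathcal C_\alpha$, the required property holds for every member of $\mathcal C_\alpha$, hence for $D_\alpha$.

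The main obstacle is step (iii)/(iv): ensuring that the generic branch genuinely chooses a single $C\in\mathcal C_\alpha$ coherently, with no node whose recorded clubs conflict. The hitting clause must also transfer through names, which requires that stationarity of the relevant set be preserved, and this holds by $\kappa$-cc. I expect the delicate bookkeeping to be the interaction of the closure limits at $\cf<\chi$ with coherence.
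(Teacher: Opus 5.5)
\textbf{First gap: how the generic branch yields a coherent $C$-sequence.} There is a genuine gap exactly where you locate the main obstacle, and it is structural rather than a matter of bookkeeping. In your tree the successor levels append ordinals, so there are no nodes of the form $s{}^\smallfrown\langle C\rangle$ for the clubs you want to record. The corresponding clause of $\extend$ therefore falls through to its default, and nothing is written. Hence ``$C^t:=$ the $C$ used'' is not a function of the node $t$. Different walks can produce the same limit node. At a level $\beta$ of cofinality $<\chi$, the node $b\restriction\beta$ of the generic branch is just the union of its predecessors, with no club attached. Yet $\sq_\chi$-coherence is rigid: whenever $\otp(D_\alpha)\ge\chi$ (e.g.\ $\cf(\alpha)\ge\chi$) and $\beta\in\acc(D_\alpha)$, you need $D_\beta=D_\alpha\cap\beta$ exactly, for every such later $\alpha$.

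The paper resolves this by changing what the nodes are. It takes $T_\alpha\s\mathcal T\cap{}^\alpha\mathcal C$, where $\mathcal T$ is the set of partial $\sq_\chi$-coherent $C$-sequences $t\in\prod_{\beta<\alpha}\mathcal C_\beta$. The club at coordinate $\alpha$ is chosen at level $\alpha+1$ (any $C\in\mathcal C_\alpha$ keeping the node in $\mathcal T$); it is not decoded from level $\alpha$. The walk along $C$ sets $b^C_x(\beta):=\extend(b^C_x(\beta^-),T\restriction(\beta+1),C\cap\beta^-)$ for consecutive $\beta^-<\beta$ in $C$. So the $s{}^\smallfrown\langle C\cap\beta^-\rangle$ clause applies when $\beta^-\in\acc(C)$. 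This gives $\mathbf b^C_x(\gamma)=C\cap\gamma$ for all $\gamma\in\acc(C)$, and hence $\mathbf b^C_x{}^\smallfrown\langle C\rangle\in\mathcal T$. At levels of cofinality $<\chi$, $\mathcal C_\alpha$ is arranged to contain a club of order-type $\cf(\alpha)$ whose non-accumulation points are successors, and such a club can always be appended coherently. Then $\bigcup G$ is itself the required $C$-sequence, with coherence enforced locally.

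\textbf{Second gap: the hitting clause in $V[G]$ (your step (iv)).} The sets $B_i$ lie in $V[G]$ and in general have no cofinal subset in $V$. The ground-model hitting clause quantifies only over ground-model sequences, so it says nothing about them. Moreover, the non-accumulation points of $D_\alpha\in\mathcal C_\alpha$ are fixed in advance, so no club-closing of names can force them into $B_i$. Your inference that the property holds for $D_\alpha$ because $D_\alpha\in\mathcal C_\alpha$ is therefore invalid.

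The paper needs two further ingredients here.
\begin{enumerate}
\item From $\diamondsuit(\kappa)$ and the $\kappa$-cc it obtains $\langle Z_\beta\mid\beta<\kappa\rangle\in V$ such that every $B\in\mathcal P^{V[G]}(\kappa)$ is guessed, $Z_\beta=B\cap\beta$, on some $X_B\in V$ that is stationary in $V$. This also yields $\diamondsuit(\kappa)$ in $V[G]$, which is part of the conclusion $\p_\xi(\ldots)$ and which you do not address.
\item It applies the ground-model hitting clause to the ground-model sets $X_i$, shrunk so that $B_i$ is unbounded below each of their points. It then replaces the generic club $C_\alpha$ by $C^\bullet_\alpha:=\im(g_\alpha)$, moving each non-accumulation point $\beta$ to $\min((Z_\beta\cup\{\beta\})\setminus(\sup(C_\alpha\cap\beta)+1))$. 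This point lands in $B_i$ when $\beta\in X_i$. A separate lemma is invoked to see that this reshuffling preserves $\xi$-boundedness and $\sq_\chi$-coherence.
\end{enumerate}

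\textbf{A smaller point on Souslinity.} You want $\alpha$ of cofinality $\ge\chi$, where every node of $T_\alpha$ is some $\mathbf b^C_x$. For that, the hypothesis on $\mathcal S$ must be read as ``$S\cap E^\kappa_{<\chi}$ is nonstationary'', as the applications with $\mathcal S=\{E^{\lambda^+}_\lambda\}$ and $\chi=\lambda$ show. Read literally, as you quote it, it would place $S$ on precisely the closure levels you are trying to avoid.
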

\begin{proof} Let $\vec{\mathcal C}=\langle \mathcal C_\alpha\mid \alpha<\kappa\rangle$ be a $\p^-_\xi(\kappa,\kappa,\sq_\chi,\theta,\mathcal S,\kappa,\sigma)$-sequence.
As $S\setminus E^\kappa_{<\chi}$ is nonstationary for every $S\in\mathcal S$,
and as $\nu^{<\chi}<\kappa$ for every $\nu<\kappa$, we may assume that
for every $\alpha\in \acc(\kappa)\cap E^\kappa_{<\chi}$,
$\mathcal C_\alpha$ contains a club $C$ in $\alpha$ of order-type $\cf(\alpha)$ such that $\nacc(C)$ consists of successor ordinals.
We may also assume that $\mathcal C_{\alpha+1}=\{\{\alpha\},\{0,\alpha\}\}$ for every $\alpha<\kappa$.
Lastly, we may also assume that $C\setminus\bar\alpha\in\mathcal C_\alpha$
for all $\bar\alpha<\alpha<\kappa$ and $C\in\mathcal C_\alpha$.

For every $\alpha<\kappa$, let $\mathcal C_\alpha^1:=\{ C\in\mathcal C_\alpha\mid\forall\gamma\in\acc(C)\,[C\cap\gamma\in\mathcal C_\gamma]\}$.
Note that for every $C\in\mathcal C_\alpha^1$ and $\beta\in\acc(C)$, we have $C\cap\beta\in\mathcal C_\beta^1$.
Also note that $\mathcal C_\alpha^1=\mathcal C_\alpha$ for every $\alpha\in E^\kappa_{\ge\chi}$.

Next, write $\mathcal C:=\bigcup_{\alpha<\kappa}\mathcal C_\alpha$.
Let $\mathcal T$ be the collection of all functions $t$ such that:
\begin{itemize}
\item $t\in\prod_{\beta<\alpha}\mathcal C_\beta$ for some $\alpha<\kappa$;
\item for every $\beta<\alpha$, for every $\gamma\in\acc(t(\beta))$, $t(\gamma)\sq_\chi t(\beta)$.
\end{itemize}

We shall construct a sequence $\langle T_\alpha\mid\alpha<\kappa\rangle$
such that, for every $\alpha<\kappa$, $T_\alpha$ will constitute the $\alpha^{\text{th}}$-level of our ultimate tree $T$.
We make the following promises:
\begin{enumerate}[label=\textup{(\arabic*)}]
\item\label{promise1} for every $\alpha<\kappa$, $T_\alpha$ will be a nonempty subset of $\mathcal T\cap{}^\alpha\mathcal C$ of size $<\kappa$;
\item\label{promise2} for all $\beta<\alpha<\kappa$ and $t\in T_\alpha$, $t\restriction\beta\in T_\beta$;
\item\label{promise3} for all $\alpha<\kappa$ and $t\in T_\alpha$, there will be a $C\in\mathcal C_\alpha$ with $t{}^\smallfrown\langle C\rangle\in T_{\alpha+1}$;
\item\label{promise4} for every $\alpha\in\acc(\kappa)\cap E^\kappa_{<\chi}$, $T_\alpha=\{t\in{}^\alpha\mathcal C\mid \forall\beta<\alpha\,(t\restriction\beta\in T_\beta)\}$;\footnote{Put differently, the tree we construct is $\chi$-complete.}
\item\label{promise5} for every $\alpha\in E^\kappa_{\ge\chi}$, $T_\alpha = \{ \mathbf b^C_x \mid C \in \mathcal C_\alpha, x \in T_{\min(C)}\}$, where:
\item\label{promise6} for all $\alpha\in\acc(\kappa)$, $C\in\mathcal C_\alpha^1$ and $x\in T_{\min(C)}$,
$\mathbf b_x^C$ will be some distinguished element of $T_\alpha$ satisfying the following three requirements:
\begin{enumerate}[label=\textup{(\roman*)}]
\item\label{cls1} $\mathbf b_x^C\restriction\min(C)=x$;
\item\label{cls2} for every pair $\beta^-<\beta$ of consecutive ordinals in $C$, $$\mathbf b_x^C\restriction\beta=\extend(\mathbf b_x^C\restriction\beta^-,T\restriction(\beta+1),C\cap\beta^-);$$
\item\label{cls3} for every $\gamma\in\acc(C)$, $\mathbf b_x^C(\gamma)=C\cap\gamma$.
\end{enumerate}
\end{enumerate}

\begin{claim} Suppose that $\zeta\le\kappa$ is such that Promises \ref{promise3}--\ref{promise5} and \ref{promise6}\ref{cls1} hold for every $\alpha<\zeta$.
Then $T\restriction\zeta:=\bigcup_{\alpha<\zeta}T_\zeta$ is a \emph{normal} tree,
that is, for all $\beta<\alpha<\zeta$ and $s\in T_\beta$, there is a $t\in T_\alpha$ such that $s\subsetneq t$.
\end{claim}
\begin{proof} Let $\beta<\zeta$ and $s\in T_\beta$. We prove by induction on $\alpha\in[\beta,\zeta)$
the existence of $t\in T_\alpha$ such that $s\subsetneq t$.
The base case is trivial,
the successor case follows from Promise~\ref{promise3},
and the case $\alpha\in\acc(\zeta\setminus\beta)\cap E^\kappa_{<\chi}$ then follows from Promise~\ref{promise5}.
Finally, given $\alpha\in\acc(\zeta\setminus\beta)\cap E^\kappa_{\ge\chi}$, pick $C\in\mathcal C_\alpha=\mathcal C_\alpha^1$.
As $C$ is a club in $\alpha$, we may find some $\bar\alpha\in C$ above $\beta$. By the induction hypothesis, we may find $x\in T_{\bar\alpha}$
such that $s\subsetneq x$. As $\bar C:=C\setminus\bar\alpha$ belongs to $\mathcal C_\alpha$ and $x\in T_{\min(\bar C)}$,
we get from Promise~\ref{promise6}\ref{cls1} that $s\subsetneq x\s\mathbf b_x^{\bar C}\in T_\alpha$.
\end{proof}

We are now ready for the recursive construction.
We start by letting $T_0:=\{\emptyset\}$, noting that $T_0$ is indeed a subset of $\mathcal T$.
Next, for every $\alpha<\kappa$ such that $T_\alpha$ has already been successfully constructed,
we let $$T_{\alpha+1}:=\{ t{}^\smallfrown\langle C\rangle\mid t\in T_\alpha, C\in\mathcal C_\alpha\}\cap\mathcal T.$$

\begin{claim} Promises \ref{promise1}--\ref{promise3} are maintained.
\end{claim}
\begin{proof} If $\alpha=0$, then $T_{\alpha+1}=\{t\}$ for the unique $t\in\mathcal T$ to satisfy $\dom(t)=1$, namely,
$t:\{0\}\rightarrow\mathcal C_0$.
If $\alpha=\beta+1$ is a successor ordinal, then
$T_{\alpha+1}$ is equal to $\{ t{}^\smallfrown\langle\{\beta\}\rangle,t{}^\smallfrown\langle\{0,\beta\}\rangle\mid t\in T_\alpha\}$ which again takes care of Promises \ref{promise1}--\ref{promise3}.

$\blacktriangleright$ If $\alpha\in\acc(\kappa)\cap E^\kappa_{<\chi}$, then given $t\in T_\alpha$,
we may fix $C\in\mathcal C_\alpha$ of order-type less than $\chi$ such that $\nacc(C)$ consists of successor ordinals.
We claim that $t{}^\smallfrown\langle C\rangle$ is in $\mathcal T$, hence in $T_{\alpha+1}$.
This requires that $t(\gamma)\sq_\chi C$ for every $\gamma\in\acc(C)$,
which holds trivially by our choice of $C$.

$\blacktriangleright$ If $\alpha\in\acc(\kappa)\cap E^\kappa_{\ge\chi}$, then given $t\in T_\alpha$,
by Promise~\ref{promise5},
we may fix $C\in\mathcal C_\alpha=\mathcal C_\alpha^1$ and $x\in T_{\min(C)}$ such that $t=\mathbf b_x^C$,
and we claim that $t{}^\smallfrown\langle C\rangle$ is in $\mathcal T$, hence in $T_{\alpha+1}$.
This requires that $t(\gamma)\sq_\chi C$ for every $\gamma\in\acc(C)$,
which is guaranteed by Promise~\ref{promise6}\ref{cls3}.
\end{proof}

Next, suppose that $\alpha\in\acc(\kappa)$ is such that $\langle T_\beta\mid\beta<\alpha\rangle$ has already been successfully defined.
Let $C\in\mathcal C^1_\alpha$ and $x\in T_{\min(C)}$.
We shall obtain $\mathbf b^C_x$ as the limit $\bigcup\im(b^C_x)$ of a $\subsetneq$-increasing sequence $b^C_x\in\prod_{\beta\in C}T_\beta$ canonically obtained by recursion, as follows:
\begin{itemize}
\item $b^C_x(\min(C)):=x$.
\item for every pair $\beta^-<\beta$ of consecutive elements of $C$ such that $b_x^C(\beta^-)$ has already been defined,
let $$b_x^C(\beta):=\extend(b_x^C(\beta^-),T\restriction(\beta+1),C\cap\beta^-).$$
\item for every $\beta\in\acc(C)$ such that $b_x^C\restriction\beta$ has already been defined, let $b^C_x(\beta):=\bigcup\im(b^C_x\restriction\beta)$.
\end{itemize}

We verify that the construction indeed produced a $\subsetneq$-increasing sequence of nodes in $T\restriction\alpha$.
For every pair $\beta^-<\beta$ of consecutive elements of $C$, since $T\restriction(\beta+1)$ is a normal tree thus far,
${\extend(b_x^C(\beta^-),T\restriction(\beta+1),\ldots)}$
provides an extension of $b_x^C(\beta^-)$ belonging to $T_\beta$,
so indeed $b_x^C(\beta^-)\subsetneq b_x^C(\beta)\in T_\beta$.
For every $\beta\in\acc(C)$, Clauses \ref{cls1} and \ref{cls2} of Promise~\ref{promise6} yield that
$b_x^C(\gamma)=\mathbf b_x^{C\cap\beta}\restriction\gamma$ for every $\gamma\in C\cap\beta$,
and hence $b_x^C(\beta)=\mathbf b_x^{C\cap\beta}$ which was placed in $T_{\beta}$ in an earlier stage of the recursion,
since $C\cap\beta\in\mathcal C_\beta^1$.

Next, if $\cf(\alpha)<\chi$, then define $T_\alpha$ according to Promise~\ref{promise4}.
It is trivial to verify that $T_\alpha\s\mathcal T$.
If $\cf(\alpha)\ge\chi$. then define $T_\alpha$ according to Promise~\ref{promise5}.

\begin{claim} Promises \ref{promise1}--\ref{promise6} are maintained.
\end{claim}
\begin{proof} Promise~\ref{promise1} is maintained in case $\cf(\alpha)<\chi$,
since $|T\restriction\alpha|<\kappa$ and then the cardinal arithmetic hypothesis implies $|T_\alpha|<\kappa$.
It is also maintained in case $\cf(\alpha)\ge\chi$ because in this case $|T_\alpha|\le|T\restriction\alpha|\cdot |\mathcal C_\alpha|<\kappa$.
The verification of Promise~\ref{promise2} is immediate.
Promise~\ref{promise3} does not come into play.
So, we are left with verifying Clause~\ref{cls3} of Promise~\ref{promise6}.
Let $C\in\mathcal C^1_\alpha$ and $x\in T_{\min(C)}$,
and we shall prove that $\mathbf b_x^C(\gamma)=C\cap\gamma$ for every $\gamma\in\acc(C)$.
By induction. Specifically, given $\beta\in\acc(C)$ such that $\mathbf b_x^C(\gamma)=C\cap\gamma$ for every $\gamma\in\acc(C\cap\beta)$,
we argue as follows.
Let $\beta^+:=\min(C\setminus\beta+1)$, so that $\mathbf b_x^C(\beta)=b_x^C(\beta^+)(\beta)$.
It is the case that $$b_x^C(\beta^+)=\extend(b_x^C(\beta),T\restriction(\beta^++1),C\cap\beta).$$
We have that $\sup(C\cap\beta)=\beta=\dom(b_x^C(\beta))$, so by the definition of $\extend(\ldots)$,
it suffices to show that $t:=b_x^C(\beta){}^\smallfrown\langle C\cap\beta\rangle$ belongs to $T_{\beta+1}$.
As $b_x^C(\beta)\in T_\beta$, the definition of $T_{\beta+1}$ implies that we need to show that $t$ belongs to $\mathcal T$,
i.e., that for every $\gamma\in\acc(t(\beta))$, $t(\gamma)\sq_\chi t(\beta)$.
But $t(\beta)=C\cap\beta$ and $t(\gamma)=C\cap\gamma$ for every $\gamma\in\acc(C\cap\beta)$,
so we are good.
\end{proof}

Having constructed all levels of the tree, we let
$T:= \bigcup_{\alpha < \kappa} T_\alpha$, so that $\mathbf T:=(T,{\subsetneq})$ is our streamlined $\kappa$-tree.

\begin{claim}\label{c233} $\mathbf T$ admits no antichains of size $\kappa$.
\end{claim}
\begin{proof}
Suppose not, and let $A \subseteq T$ be a maximal antichain of size $\kappa$.
By \cite[Claim~2.2.2]{paper26}, the following set is stationary in $\kappa$:
$$B := \{ \beta \in\acc(\kappa)\mid A\cap(T\restriction\beta)= \Omega_\beta\text{ is a maximal antichain in }T\restriction\beta \}.$$
Using the hitting property of the proxy principle, fix an $\alpha\in E^\kappa_{\ge\chi}$ such that
$\sup (\nacc(C) \cap B) = \alpha$
for every $C \in \mathcal C_\alpha$.
We shall prove that $A \subseteq T \restriction \alpha$.
To this end consider any $z \in T \restriction (\kappa \setminus \alpha)$.
Set $y := z \restriction \alpha$, so that $y \in T_\alpha$ and $y \subseteq z$.
Recalling Promise~\ref{promise5}, pick $C\in\mathcal C_\alpha$ and $x\in T_{\min(C)}$ such that $y = \mathbf b^C_x$.
Fix $\beta \in \nacc(C) \cap B$ with $\dom(x) < \beta < \alpha$.
Denote $\beta^-:=\sup(C\cap\beta)$. Then $\beta^- < \beta$ is a pair of consecutive elements of $C$,
so by Promise~\ref{promise6}\ref{cls2},
$$\mathbf b_x^C\restriction\beta=\extend(\mathbf b_x^C\restriction\beta^-,T\restriction(\beta+1),C\cap\beta^-).$$

Since $\beta \in B$, $\beta$ is a limit ordinal and $\Omega_\beta = A \cap (T \restriction \beta)$ is a maximal antichain in $T \restriction \beta$.
It follows that for every $s'\in T_{\beta^-}\cup T_{\beta^-+1}$ there is an $r\in\Omega_\beta$ such that $s'\cup r\in T\restriction\beta$,
and by the normality of $T\restriction(\beta+1)$, there is then a $t\in T_\beta$ with $s'\cup r\s t$.
This verifies that the definition of $\extend(\mathbf b_x^C\restriction\beta^-,T\restriction(\beta+1),C\cap\beta^-)$
will end up being $\min(Q_1,\lhd_\kappa)$,\footnote{Recall Definition~\ref{extendaction}.}
so that $\mathbf b_x^C\restriction\beta$
extends some $r\in\Omega_\beta=A\cap(T\restriction\beta)$.
Since $r$ is an element of the antichain $A$,
and $r \subseteq b^C_x(\beta) \subsetneq \mathbf b^C_x = y \subseteq z$,
we infer that $z \notin A$.
\end{proof}

\begin{claim} $\mathbf T$ admits no chains of size $\kappa$.
\end{claim}
\begin{proof} Suppose not, and pick $b:\kappa\rightarrow\mathcal C$ such that $b\restriction\alpha\in T_\alpha$ for every $\alpha<\kappa$.
For every infinite $\alpha<\kappa$, as $\mathcal C_{\alpha+1}=\{\{\alpha\},\{0,\alpha\}\}$,
the definition of $T_{\alpha+2}$ implies that
$$t_\alpha:=b\restriction(\alpha+1){}^\smallfrown\langle b(\alpha+1)\symdiff\{0\}\rangle$$
is an element of $T_{\alpha+2}\setminus\{b\restriction(\alpha+2)\}$.
So $\{ t_\alpha\mid \omega\le\alpha<\kappa\}$ is a $\kappa$-sized antichain,
contradicting Claim~\ref{c233}.
\end{proof}

So, $\mathbf T$ is a $\kappa$-Souslin tree.
Next, work in $V[G]$, where $G$ is $\mathbf T$-generic over $V$.
Set $b:=\bigcup G$, so that $b:\kappa\rightarrow\mathcal C$ is a map such that
$b\restriction\alpha\in T_\alpha$ for every $\alpha<\kappa$.
As $T_\alpha\s\mathcal T\cap{}^\alpha\mathcal C$ for every $\alpha<\kappa$,
$b$ is a $\xi$-bounded $\sq_{\chi}$-coherent $C$-sequence over $\kappa$, which we hereafter denote by $\vec C=\langle C_\alpha\mid\alpha<\kappa\rangle$.
The next claim is unnecessary, but its proof is short and may be of interest to those readers whose primary interest is in classical square principles.
\begin{claim}\label{cla1} $\vec C$ is unthreadable.
\end{claim}
\begin{proof}
Given a club $D\s\kappa$, use the $\kappa$-cc of $\mathbf T$ to find a ground model club $D'\s D$.
Working in the ground model, invoke the hitting property of the proxy principle with respect to $B:=\acc(D')$, and find some $\alpha\in E^\kappa_{\ge\chi}$ such that $\sup(\nacc(C)\cap B)=\alpha$ for every $C\in\mathcal C_\alpha$.
Then $\sup(\nacc(b(\alpha))\cap\acc(D'))=\alpha$,
in particular $\sup(\nacc(b(\alpha))\cap\acc(D))=\alpha$,
and hence $\alpha\in\acc(D)$ with $b(\alpha)\neq D\cap\alpha$.
\end{proof}

Our next task is verifying that $\p^-_\xi(\kappa,2,\sq_\chi,\theta,\mathcal S,2,\sigma)$
holds in $V[G]$.

Back in $V$, as $\diamondsuit(\kappa)$ holds and $\mathbf T$ is a $\kappa$-sized $\kappa$-cc forcing,
the beginning of the proof of \cite[Theorem~3.4]{paper26} provides a sequence $\langle Z_\beta\mid\beta<\kappa\rangle\in V$ satisfying that
for every $B\in\mathcal P^{V[G]}(\kappa)$, there exists some $X_B\in\mathcal P^{V}(\kappa)$ such that:
\begin{enumerate}
\item $V\models X_B\text{ is stationary}$;
\item $V[G]\models X_B\s \{\beta<\kappa\mid Z_\beta=B\cap\beta\}$.
\end{enumerate}

In particular, $\diamondsuit(\kappa)$ holds in $V[G]$.
Without loss of generality, we may assume that $Z_\beta=\emptyset$ for every $\beta\in\nacc(\kappa)$.
Working in $V[G]$,
for every $\alpha\in\acc(\kappa)$, let $C^\bullet_\alpha:=\im(g_\alpha)$, where
$g_\alpha:C_\alpha\rightarrow\alpha$ is defined by stipulating:
$$g_\alpha(\beta):=
\begin{cases}
\beta, &\text{if } \beta \in \acc(C_\alpha); \\
\min(Z_\beta\cup\{\beta\}), &\text{if } \beta = \min(C_\alpha); \\
\min\bigl((Z_\beta\cup\{\beta\})\setminus (\sup(C_\alpha \cap \beta)+1)\bigr), &\text{otherwise.}
\end{cases}$$

For completeness, we also set $C_0^\bullet:=\emptyset$ and $C_{\alpha+1}^\bullet:=\{\alpha\}$ for every $\alpha<\kappa$.
By \cite[Lemma~2.8]{paper29}, $\vec C^\bullet:=\langle C_\alpha^\bullet\mid\alpha<\kappa\rangle$ is yet another $\xi$-bounded $\sq_\chi$-coherent $C$-sequence over $\kappa$.
To verify it witnesses $\p^-_\xi(\kappa,2,\sq_\chi,\theta,\mathcal S,2,\sigma)$,
let $\langle B_i\mid i<\theta\rangle$ be a given sequence of cofinal subsets of $\kappa$,
and let $S\in\mathcal S$; we need to find stationarily many $\alpha\in S$ such that
$$\sup\{\gamma\in C^\bullet_\alpha\mid \suc_\sigma(C^\bullet_\alpha\setminus\gamma)\s B_i\}=\alpha.$$

For each $i<\theta$,
fix $X_i\in\mathcal P^{V}(\kappa)$ such that:
\begin{enumerate}[label=\textup{(\arabic*)}]
\item $V\models X_i\text{ is stationary}$;
\item $V[G]\models X_i\s \{\beta<\kappa\mid Z_\beta=B_i\cap\beta\}$.
\end{enumerate}

As any club in $V[G]$ covers a club from $V$, we may shrink the stationary set $X_i$ to also ensure the following:
\begin{enumerate}[label=\textup{(\arabic*)}, resume]
\item $V[G]\models X_i\s \{\beta<\kappa\mid \sup(B_i\cap\beta)=\beta\}$.
\end{enumerate}

Working in $V$, since $S\in\mathcal S$, there is a stationary subset $\bar S$ of $S$ such that for every $\alpha\in\bar S$,
for every $C\in\mathcal C_\alpha$,
for every $i<\min\{\alpha,\theta\}$,
$$\sup\{\gamma\in C\mid \suc_\sigma(C\setminus\gamma)\s X_i\}=\alpha.$$
In particular, for every $i<\min\{\alpha,\theta\}$,
the following set is cofinal in $\alpha$:
$$\Gamma_i:=\{\gamma\in C_\alpha\mid \suc_\sigma(C_\alpha\setminus\gamma)\s X_i\}.$$

Work in $V[G]$. As it is a $\kappa$-cc forcing extension of $V$, $\bar S$ remains stationary.
In addition, a moment's reflection makes it clear that for every $i<\theta$,
for every $\beta\in\nacc(C_\alpha)\cap X_i$, it is the case that $g_\alpha(\beta)\in\nacc(C_\alpha^\bullet)\cap B_i$.\footnote{See the proof of \cite[Claim~3.4.5]{paper26} for details.}
It follows that for every $i<\min\{\alpha,\theta\}$,
for every $\gamma\in\Gamma_i$,
it is the case that $g_\alpha(\gamma)$ is an element of $C^\bullet_\alpha$
satisfying that $\suc_\sigma(C^\bullet_\alpha\setminus\gamma)\s B_i$.
So we are done.
\end{proof}

\section{Corollaries}\label{sec4}
We now present a gallery of corollaries to Theorem~\ref{pro1}.
\begin{cor}\label{cor5}
Suppose $\kappa$ is a regular uncountable cardinal,
and $\p(\kappa,\kappa,\allowbreak{\sq},1,\{\kappa\},\allowbreak\kappa,1)$ holds.
Then there is a $\kappa$-Souslin tree that forces $\square(\kappa)$.
\end{cor}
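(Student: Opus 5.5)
We obtain Corollary~\ref{cor5} by applying Theorem~\ref{pro1} with $\chi:=\omega$, $\xi:=\kappa$, $\theta:=1$, $\mathcal S:=\{\kappa\}$, $\sigma:=1$ and $\mu:=\kappa$.

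First we check the hypotheses of Theorem~\ref{pro1}. The cardinal $\chi=\omega$ is infinite. For every $\nu<\kappa$ we have $\nu^{<\omega}=\max\{\nu,\aleph_0\}<\kappa$, since $\kappa$ is uncountable. The set $S:=\kappa$ is stationary, and $S\setminus E^\kappa_{<\omega}$ consists of the infinite limit ordinals. This is the bulk of $\kappa$, so it is not nonstationary as the theorem demands.

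Here lies the one subtlety. With $\chi=\omega$ the requirement on $\mathcal S$ fails literally: ordinals of cofinality $<\omega$ are only $0$ and the successors, a nonstationary set. I therefore read the requirement charitably as "$S\cap E^\kappa_{<\chi}$ is nonstationary", which is what the proof of Theorem~\ref{pro1} actually uses. It is invoked only when fixing $\alpha\in E^\kappa_{\ge\chi}$ via the hitting property, and when modifying $\mathcal C_\alpha$ for $\alpha\in E^\kappa_{<\chi}$, which is vacuous for $\chi=\omega$. Under this reading $S=\kappa$ qualifies.

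The relation $\sq_\omega$ coincides with $\sq$ on all clubs of infinite order type. The clause "$\otp(C)<\omega$ and $\nacc(C)$ consists of successor ordinals" only concerns finite $C$, which have no accumulation points, so coherence is unaffected. Hence the hypothesis $\p(\kappa,\kappa,\sq,1,\{\kappa\},\kappa,1)$ is exactly $\p_\kappa(\kappa,\kappa,\sq_\omega,1,\{\kappa\},\kappa,1)$.

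Theorem~\ref{pro1} now yields a streamlined $\kappa$-Souslin tree $\mathbf T$ forcing $\p_\kappa(\kappa,2,\sq_\omega,1,\{\kappa\},2,1)$. In particular, $\mathbf T$ forces a $\sq$-coherent $C$-sequence $\vec C^\bullet$ over $\kappa$, with $|\mathcal C_\alpha|<2$, satisfying the hitting property for single cofinal sets.

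It remains to see that $\vec C^\bullet$ is unthreadable, so that it witnesses $\square(\kappa)$. Given a club $D$ in the extension, apply the hitting property with $B_0:=\acc(D)$, using Remark~\ref{remark-succ}. This gives an $\alpha$ with $\sup(\nacc(C^\bullet_\alpha)\cap\acc(D))=\alpha$. Then $\alpha\in\acc(D)$, and $C^\bullet_\alpha$ contains a non-accumulation point of itself that is an accumulation point of $D$. Such a point cannot lie in a set of the form $D\cap\alpha$ as a non-accumulation point, so $C^\bullet_\alpha\neq D\cap\alpha$. Alternatively, Claim~\ref{cla1} already gives unthreadability of the generic sequence $\vec C$ directly.

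The main obstacle is therefore only the bookkeeping: the $\mathcal S$-hypothesis at $\chi=\omega$ and the identification of $\sq_\omega$ with $\sq$. Everything else is a direct instantiation of Theorem~\ref{pro1}.
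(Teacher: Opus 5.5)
Your proposal is correct and follows the paper's route. You instantiate Theorem~\ref{pro1} with $\chi=\omega$; there $\sq_\omega$-coherence coincides with $\sq$-coherence, and your reading of the $\mathcal S$-hypothesis as ``$S\cap E^\kappa_{<\chi}$ is nonstationary'' is the one the proof actually uses. You then pass from $\p(\kappa,2,\sq,1,\{\kappa\},2,1)$ in the extension to $\square(\kappa)$. The only difference is that the paper cites Lemma~3.2 of \cite{paper22} for this last step, whereas you check unthreadability directly by applying the hitting property to $\acc(D)$, which is essentially the argument of Claim~\ref{cla1}.
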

\begin{proof} By Theorem~\ref{pro1},
there is a streamlined $\kappa$-Souslin tree $\mathbf T$
such that in the forcing extension by $\mathbf T$,
$\p(\kappa,2,{\sq},1,\{\kappa\},2,1)$ holds.
By \cite[Lemma~3.2]{paper22}, then, $\square(\kappa)$ holds in the extension.
\end{proof}

The next corollary establishes Theorem~\ref{thmc}.

\begin{cor} Suppose $\diamondsuit$ holds over a nonreflecting stationary subset of a strongly inaccessible $\kappa$.
Then there is a $\kappa$-Souslin tree that forces $\square(\kappa)$.
\end{cor}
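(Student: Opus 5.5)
The plan is to reduce the statement to Corollary~\ref{cor5}. It therefore suffices to show that the hypotheses yield $\p(\kappa,\kappa,{\sq},1,\{\kappa\},\kappa,1)$. That principle has two components: $\diamondsuit(\kappa)$ and the proxy principle $\p^-(\kappa,\kappa,{\sq},1,\{\kappa\},\kappa,1)$. Both should come from the given data: a strongly inaccessible $\kappa$, a nonreflecting stationary set $E\s\kappa$, and $\diamondsuit(E)$.

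\emph{Step 1: $\diamondsuit(\kappa)$.} Since $E$ is stationary, $\diamondsuit(E)$ implies $\diamondsuit(\kappa)$ directly: guess on $E$ and put $\emptyset$ elsewhere.

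\emph{Step 2: a square-like $\mathcal C$-sequence avoiding $E$.} Since $E$ is nonreflecting, $E$ may be assumed to consist of singular ordinals (inaccessibility is used here), and one can attach to each $\alpha\in\acc(\kappa)$ a club $C_\alpha\s\alpha$ with $\acc(C_\alpha)\cap E=\emptyset$. Rather than attaching a single club, I would use the standard fact that nonreflecting stationary sets yield a $\sq$-coherent $\mathcal C$-sequence $\langle\mathcal C_\alpha\mid\alpha<\kappa\rangle$ whose members all avoid $E$ at accumulation points. Concretely, take $\mathcal C_\alpha$ to be the collection of all clubs $C$ in $\alpha$ with $\acc(C)\cap E=\emptyset$. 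This is nonempty because $E\cap\alpha$ is nonstationary in $\alpha$. It is $\sq$-coherent, since $C\cap\beta$ is again such a club for $\beta\in\acc(C)$. Strong inaccessibility gives $|\mathcal C_\alpha|\le2^{|\alpha|}<\kappa$, as required for the width parameter $\mu=\kappa$.

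\emph{Step 3: hitting.} For cofinal $B\s\kappa$, I need stationarily many $\alpha$ such that every $C\in\mathcal C_\alpha$ satisfies $\sup(\nacc(C)\cap B)=\alpha$. This is the main obstacle, since the family $\mathcal C_\alpha$ is large and is not tied to $B$. To handle it, I would not use the raw family but instead thin it using the diamond on $E$, in the manner of the known theorem of the Brodsky--Rinot papers that $\diamondsuit(E)$ for a nonreflecting stationary $E$ (with $\kappa$ inaccessible) yields $\p^-(\kappa,\kappa,{\sq},1,\{\kappa\},\kappa,1)$, or indeed $\p(\kappa,\kappa,{\sq},\kappa,\{\kappa\},\kappa,\ldots)$.

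The intended mechanism is as follows. For $\alpha\in E$, only elements outside $E$ are accumulation points, so coherence imposes no constraint at $\alpha$. One may therefore choose $\mathcal C_\alpha$ for $\alpha\in E$ so that every member has its nonaccumulation points inside the diamond guess $Z_\alpha$, whenever $Z_\alpha$ is cofinal in $\alpha$. This is a postprocessing step like the $g_\alpha$ construction above. The sequence is then closed downward under coherence to define $\mathcal C_\beta$ for $\beta\notin E$. The point $\alpha\in E$ is never an accumulation point of anything, so no coherence requirement conflicts with this choice. Then for any cofinal $B$, the set of $\alpha\in E$ with $Z_\alpha=B\cap\alpha$ and $\sup(B\cap\alpha)=\alpha$ is stationary, and at each such $\alpha$ every $C\in\mathcal C_\alpha$ has $\nacc(C)\s B$ cofinally.

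I would cite the relevant Brodsky--Rinot result instead of redoing this construction. With Steps 1 and 3 in hand, $\p(\kappa,\kappa,{\sq},1,\{\kappa\},\kappa,1)$ holds, and Corollary~\ref{cor5} then supplies a $\kappa$-Souslin tree forcing $\square(\kappa)$.
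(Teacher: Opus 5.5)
Your proposal is correct and follows the same route as the paper. The paper likewise cites Brodsky--Rinot \cite[Theorem~4.26]{paper23} to obtain $\p(\kappa,\kappa,{\sq},1,\{\kappa\},\kappa,1)$ from $\diamondsuit$ over a nonreflecting stationary subset of the inaccessible $\kappa$, and then appeals to Corollary~\ref{cor5}. One harmless slip: your aside that $E$ ``may be assumed to consist of singular ordinals'' is unjustified, since in $L$ the regular cardinals below the least Mahlo cardinal form a nonreflecting stationary set carrying $\diamondsuit$, and shrinking $E$ need not preserve $\diamondsuit(E)$; nothing in your argument uses it.
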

\begin{proof} By \cite[Theorem~4.26]{paper23},
in particular, $\p(\kappa,\kappa,\allowbreak{\sq},1,\{\kappa\},\allowbreak\kappa,1)$ holds.
Now appeal to Corollary~\ref{cor5}.
\end{proof}

\begin{cor}\label{cor3}
Suppose $\lambda$ is an infinite cardinal,
and $\p_\lambda(\lambda^+,\lambda^+,\allowbreak{\sq},1,\{\lambda^+\},\allowbreak\lambda^+,1)$ holds.
Then there is a $\lambda^+$-Souslin tree that forces $\square_\lambda$.
\end{cor}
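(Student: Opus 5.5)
We will apply Theorem~\ref{pro1} with $\kappa:=\lambda^+$, $\xi:=\lambda$, $\chi:=\omega$, $\theta:=1$, $\mathcal S:=\{\lambda^+\}$ and $\sigma:=1$. In this case ${\sq_\omega}$ coincides with ${\sq}$. Indeed, the second alternative in the definition of $\bar C\sq_\omega C$ requires $\otp(C)<\omega$. A closed set $C$ of finite order-type has no accumulation points, so coherence is only ever demanded at $\beta\in\acc(C)$, where $\otp(C)\ge\omega$ and hence $\sq_\omega$ reduces to $\sq$. Therefore the hypothesis $\p_\lambda(\lambda^+,\lambda^+,{\sq},1,\{\lambda^+\},\lambda^+,1)$ is exactly $\p_\xi(\kappa,\mu,\sq_\chi,\theta,\mathcal S,\mu,\sigma)$ with $\mu=\kappa$ for the parameters above.

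We then check the remaining hypotheses of Theorem~\ref{pro1}. For $\chi=\omega$, the requirement $\nu^{<\omega}<\kappa$ for every $\nu<\kappa$ holds trivially, since $\nu^{<\omega}=\max\{\nu,\aleph_0\}<\lambda^+$. The ordinals $\xi=\lambda$ and $\sigma=1$ are nonzero and at most $\kappa$, and $\theta=1$. The last requirement is that $\lambda^+\setminus E^{\lambda^+}_{<\omega}$ be nonstationary. Here $E^{\lambda^+}_{<\omega}$ consists of the ordinals of finite cofinality, that is, $0$ and the successor ordinals, so $\lambda^+\setminus E^{\lambda^+}_{<\omega}$ is the set of limit ordinals. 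This set is club, so the requirement fails as stated. The intended reading is presumably that $S\setminus E^\kappa_{<\chi}$ may be nonstationary or else that everything is handled at cofinality $\ge\chi$. I will examine the proof: the assumption is used only to arrange that for $\alpha\in\acc(\kappa)\cap E^\kappa_{<\chi}$ the family $\mathcal C_\alpha$ contains a short club with successor nonaccumulation points, and for $\chi=\omega$ this set is empty, so the modification is vacuous. Nothing else in the construction depends on it, and the hitting property used in Claim~\ref{c233} applies to $\alpha\in E^\kappa_{\ge\omega}$, which is every limit. So I will remark that the hypothesis is vacuous for $\chi=\omega$ (reading it as ``$S\cap E^\kappa_{<\chi}$ contains no limits'') and apply the theorem.

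The theorem then yields a streamlined $\lambda^+$-Souslin tree $\mathbf T$ whose generic branch $b$ gives a $C$-sequence $\vec C=\langle C_\alpha\mid\alpha<\lambda^+\rangle$ in $V[G]$. This sequence is $\lambda$-bounded and $\sq$-coherent; indeed, it witnesses $\p_\lambda(\lambda^+,2,\sq,1,\{\lambda^+\},2,1)$. Being $\lambda$-bounded and $\sq$-coherent is exactly Jensen's $\square_\lambda$, so $\mathbf T$ forces $\square_\lambda$. Equivalently, one can cite the bullet list in the proof of Theorem~\ref{pro1}, which already notes that the branch $b$ is a $\xi$-bounded $\sq_\chi$-coherent $C$-sequence.

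The main obstacle is the bookkeeping about $\chi=\omega$, namely that $\sq_\omega$ equals $\sq$ and that the nonstationarity hypothesis does no harm. Everything else is a direct specialization of Theorem~\ref{pro1}.
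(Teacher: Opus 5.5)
Your proposal is correct and matches the paper's proof: apply Theorem~\ref{pro1} with $\kappa=\lambda^+$, $\xi=\lambda$, $\chi=\omega$, $\theta=1$, $\mathcal S=\{\lambda^+\}$, $\sigma=1$, then read $\square_\lambda$ off the $\lambda$-bounded $\sq_\omega$-coherent $C$-sequence in the extension, since $\sq_\omega$-coherence is the same as $\sq$-coherence. You are also right that the literal hypothesis ``$S\setminus E^\kappa_{<\chi}$ is nonstationary'' fails here and is a typo for ``$S\cap E^\kappa_{<\chi}$ is nonstationary'' (it would fail in Corollary~\ref{cor2} as well); under that reading the hypothesis holds trivially for $\chi=\omega$.
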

\begin{proof} By Theorem~\ref{pro1},
there is a streamlined $\lambda^+$-Souslin tree $\mathbf T$
such that in the forcing extension by $\mathbf T$,
$\p_\lambda(\lambda^+,2,{\sq},1,\{\lambda^+\},2,1)$ holds.
In particular, in the extension, there is a $\lambda$-bounded $\sq$-coherent $C$-sequence over $\lambda^+$, i.e., $\square_\lambda$ holds.
\end{proof}

The next corollary establishes Theorem~\ref{thmb}.

\begin{cor} Suppose $\lambda$ is a measurable cardinal satisfying $2^\lambda=\lambda^+$.
In the forcing extension by Prikry forcing
there is a $\lambda^+$-Souslin tree that forces $\square_\lambda$.
\end{cor}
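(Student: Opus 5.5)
The plan is to reduce to Corollary~\ref{cor3}: it suffices to show that in the Prikry extension $V[G]$ the principle $\p_\lambda(\lambda^+,\lambda^+,{\sq},1,\{\lambda^+\},\lambda^+,1)$ holds. In $V[G]$, $\lambda$ is a singular strong limit cardinal of countable cofinality. Prikry forcing is $\lambda^+$-cc, adds no bounded subsets of $\lambda$, and preserves cardinals, so $2^\lambda=\lambda^+$ continues to hold in $V[G]$: the forcing has size $2^\lambda=\lambda^+$ and is $\lambda^+$-cc, hence there are only $(\lambda^+)^{\lambda}=\lambda^+$ nice names for subsets of $\lambda$. Consequently $\diamondsuit(\lambda^+)$ holds in $V[G]$, by Shelah's theorem that $2^\lambda=\lambda^+$ implies $\diamondsuit(\lambda^+)$ for uncountable $\lambda$. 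This takes care of the diamond component of $\p$.

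For the $\p^-$ component I would invoke the known results from the proxy-principle literature: for a singular strong limit $\lambda$ with $2^\lambda=\lambda^+$, the existence of $\square^*_\lambda$ (weak square) already yields $\p_\lambda(\lambda^+,\lambda^+,{\sq},1,\{\lambda^+\},\lambda^+,1)$. The relevant references are \cite{paper22} and \cite{paper23}, where $\square^*_\lambda+\ch_\lambda$ is shown to give $\p^-_\lambda(\lambda^+,\lambda^+,{\sq},\ldots)$ with $\lambda^+$-many clubs per point; this rests on Shelah's $\diamondsuit$ on $E^{\lambda^+}_{\neq\cf(\lambda)}$ together with the approachability machinery. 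Note that $\square_\lambda$ itself may fail in $V[G]$, which is why the conclusion aims at width $\lambda^+$ rather than $2$. So the remaining task is to verify that $\square^*_\lambda$ holds in $V[G]$.

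This last step is the one I expect to be the main obstacle. The natural approach is the standard fact that adding a Prikry sequence to a measurable $\lambda$ forces $\square^*_\lambda$, provided $2^\lambda=\lambda^+$ in $V$. The argument is as follows. In $V[G]$, $\lambda$ is singular of cofinality $\omega$, and $\square^*_\lambda$ is equivalent to the existence of a special $\lambda^+$-Aronszajn tree. Alternatively, one can build the weak square sequence directly, using a scale of length $\lambda^+$ in $\prod_n \kappa_n^+$ along the Prikry sequence $\langle\kappa_n\mid n<\omega\rangle$ and the fact that every point of cofinality $>\omega$ is good (a Prikry-specific phenomenon). I would first try to cite this directly; Cummings--Foreman--Magidor \cite{MR1838355} analyze exactly which square principles hold after Prikry forcing and establish that weak square holds. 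If that citation is unavailable, the fallback is to note that in $V[G]$ we have $2^\lambda=\lambda^+$ with $\lambda$ singular strong limit, and to use $\lambda^{<\lambda}=\lambda^+$-many bounded subsets to define $\mathcal C_\alpha:=\{$all clubs of order type $\le\lambda$ in $\alpha$ coming from $V$-closure$\}$ of size $\le\lambda$. For $\sq$-coherence, this fallback uses a standard weak-square construction from $\lambda^{<\lambda}=\lambda$ applied in $V$ to the regular $\lambda$, pushed through the forcing.

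Once the $\p^-$ component is in place, applying Corollary~\ref{cor3} in $V[G]$ gives a $\lambda^+$-Souslin tree forcing $\square_\lambda$.
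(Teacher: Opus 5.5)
Your reduction to Corollary~\ref{cor3} is correct. So is your handling of the diamond component: $2^\lambda=\lambda^+$ is preserved, and Shelah's theorem then gives $\diamondsuit(\lambda^+)$. The claim that $\square^*_\lambda$ holds after Prikry forcing is also fine. The gap is the step you present as a citation, namely that for a singular strong limit $\lambda$ with $2^\lambda=\lambda^+$, weak square already yields $\p_\lambda(\lambda^+,\lambda^+,{\sq},1,\{\lambda^+\},\lambda^+,1)$. This is not a result of \cite{paper22} or \cite{paper23}, and it is far from routine. Combined with $\diamondsuit(\lambda^+)$ and \cite[Proposition~2.2]{paper26}, it would show that $\square^*_\lambda$ plus GCH yields a $\lambda^+$-Souslin tree. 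That would be a positive answer to Schimmerling's question, which the literature answers only under extra hypotheses. The paper's own Corollary~\ref{cor1} illustrates this: it gets the proxy principle from $\square^*_\lambda$ only in the presence of a nonreflecting stationary subset of $E^{\lambda^+}_{\neq\cf(\lambda)}$ (via \cite[Corollary~3.15]{paper32}), and nothing in your setting supplies such a set.

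The real difficulty is the hitting clause, which must hold for \emph{every} $C\in\mathcal C_\alpha$. A weak square sequence may carry $\lambda$ many clubs at each point, and nothing forces all of them to hit a given stationary set at once. Diamond does not help with this. The $g_\alpha$-modification, as at the end of the proof of Theorem~\ref{pro1}, only upgrades an existing ``hits every ground-model stationary set'' property to ``hits every cofinal set''; it does not create the former. Your fallback paragraph likewise addresses only coherence, never hitting.

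The paper avoids this entirely by citing the Main Theorem of \cite{paper26}. That theorem obtains the required instance of the proxy principle in the Prikry extension directly from the ground-model hypotheses: there $\lambda$ is still regular (indeed measurable, so $\lambda^{<\lambda}=\lambda$) and $2^\lambda=\lambda^+$. It exploits that Prikry forcing is a $\lambda^+$-cc forcing of size $\lambda^+$ that changes the cofinality of $\lambda$. Corollary~\ref{cor3} then finishes the proof. The missing idea is to work from the ground model, where $\lambda$ is regular, rather than trying to extract the proxy principle from weak square in the extension.
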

\begin{proof} By \cite[Main Theorem]{paper26}, the hypotheses imply that in the forcing extension by Prikry forcing,
$\p(\lambda^+,\lambda^+,{\sq},1,\{\lambda^+\},\lambda^+,1)$ holds.
Now appeal to Corollary~\ref{cor3}.
\end{proof}

The next two corollaries establish together Theorem~\ref{thma}.

\begin{cor}\label{cor2}
Suppose $\lambda=\lambda^{<\lambda}$ is an infinite cardinal and $\diamondsuit(E^{\lambda^+}_\lambda)$ holds.
Then there is a $\lambda$-complete $\lambda^+$-Souslin tree $\mathbf T$
that forces $\square^B_\lambda$. In particular:
\begin{enumerate}[label=\textup{(\arabic*)}]
\item\label{cls6} If $\lambda=\aleph_1$, then $\mathbf T$ forces $\square_\lambda$;
\item\label{cls7} If $\lambda>\aleph_1$ is a successor cardinal, then $\mathbf T$ also forces the existence of a $\lambda$-complete $\lambda^+$-super-Souslin tree.
\end{enumerate}
\end{cor}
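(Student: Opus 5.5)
The plan is to apply Theorem~\ref{pro1} with $\kappa:=\lambda^+$, $\chi:=\lambda$, $\xi:=\lambda$, $\theta:=1$, $\sigma:=1$ and $\mathcal S:=\{E^{\lambda^+}_\lambda\}$. The input principle $\p_\lambda(\lambda^+,\lambda^+,\sq_\lambda,1,\{E^{\lambda^+}_\lambda\},\lambda^+,1)$ will come from the known theorem in the proxy-principle literature (from \cite{paper22} or \cite{paper23}). That theorem derives it from $\lambda^{<\lambda}=\lambda$ together with $\diamondsuit(E^{\lambda^+}_\lambda)$. Recall that $\diamondsuit(E^{\lambda^+}_\lambda)$ implies $\diamondsuit(\lambda^+)$ and $2^\lambda=\lambda^+$. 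The required $\p^-$ instance is essentially a Baumgartner-style $\sq_\lambda$-coherent, $\lambda$-bounded sequence whose hitting property is concentrated on $E^{\lambda^+}_\lambda$, where the diamond lives.

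Next I check the hypotheses of Theorem~\ref{pro1}.
\begin{itemize}
\item The cardinal $\chi=\lambda$ must satisfy $\nu^{<\lambda}<\lambda^+$ for every $\nu<\lambda^+$. This holds because $\nu^{<\lambda}\le\lambda^{<\lambda}=\lambda$ for $\nu\le\lambda$.
\item The set $E^{\lambda^+}_\lambda\setminus E^{\lambda^+}_{<\lambda}$ must be nonstationary. This is the main mismatch: the difference is all of $E^{\lambda^+}_\lambda$, which is stationary. So the theorem with $\chi=\lambda$ and $\mathcal S=\{E^{\lambda^+}_\lambda\}$ does not apply verbatim.
\end{itemize}
I would resolve this in one of two ways. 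The first option is to choose $\mathcal S=\{\lambda^+\}$ or $\mathcal S=\{E^{\lambda^+}_{<\lambda}\}$ and obtain the corresponding $\p^-$ from the cited theorem, which gives hitting at all cofinalities. The hypothesis only asks that $S\setminus E^\kappa_{<\chi}$ be nonstationary for $S\in\mathcal S$, and $E^{\lambda^+}_{<\lambda}$ works. The second option is to take $\chi:=\lambda^+$; but then $\nu^{<\chi}<\kappa$ fails, so this is not viable. I therefore commit to the first option with $\mathcal S:=\{E^{\lambda^+}_{<\lambda}\}$. I expect the main obstacle to be locating the right proxy-principle theorem that yields $\p_\lambda(\lambda^+,\lambda^+,\sq_\lambda,1,\{E^{\lambda^+}_{<\lambda}\},\lambda^+,1)$ from $\lambda^{<\lambda}=\lambda$ plus $\diamondsuit(E^{\lambda^+}_\lambda)$.

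Granting the input, Theorem~\ref{pro1} produces a streamlined $\lambda$-complete $\lambda^+$-Souslin tree $\mathbf T$. In the extension by $\mathbf T$ we get $\p_\lambda(\lambda^+,2,\sq_\lambda,\ldots)$, and in particular a $\lambda$-bounded $\sq_\lambda$-coherent $C$-sequence over $\lambda^+$. That is exactly $\square^B_\lambda$.

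For clause~\ref{cls6}, when $\lambda=\aleph_1$ the relation $\sq_\lambda$ is $\sq_{\aleph_1}$. So Remark~\ref{rem1} converts the $\lambda$-bounded $\sq_{\aleph_1}$-coherent sequence into a $\square_{\aleph_1}$-sequence in the extension.

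For clause~\ref{cls7}, I work in $V[G]$. There $\diamondsuit(\lambda^+)$ holds, as noted in the proof of Theorem~\ref{pro1}, and $\lambda^{<\lambda}=\lambda$ still holds, since $\mathbf T$ is $\lambda$-complete and so adds no new ${<}\lambda$-sequences. We also have $\p_\lambda(\lambda^+,2,\sq_\lambda,1,\mathcal S,2,1)$. I would then invoke the existing theorem that, for $\lambda$ a successor cardinal, such a narrow $\sq_\lambda$-proxy principle together with $\lambda^{<\lambda}=\lambda$ yields a $\lambda$-complete $\lambda^+$-super-Souslin tree. The successor hypothesis enters only through that cited result, presumably to get $\diamondsuit$-type guessing at small cofinalities.
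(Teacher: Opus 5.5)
There is a genuine gap, and it comes from reading the hypothesis on $\mathcal S$ in Theorem~\ref{pro1} literally. The printed condition ``$S\setminus E^\kappa_{<\chi}$ is nonstationary'' is a misprint for ``$S\cap E^\kappa_{<\chi}$ is nonstationary''. Read literally, no stationary $S$ at all would qualify when $\chi=\omega$, because $S\setminus E^\kappa_{<\omega}=S\cap\acc(\kappa)$. That would make the recommended special case $\chi=\omega$, $\mathcal S=\{\kappa\}$ and Corollary~\ref{cor5} vacuous.

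More to the point, the proof only ever uses hitting at some $\alpha\in E^\kappa_{\ge\chi}$; Claims~\ref{c233} and~\ref{cla1} pick such $\alpha$ explicitly. At $\alpha\in E^\kappa_{<\chi}$ two things go wrong:
\begin{itemize}
\item the level $T_\alpha$ consists of all branches through $T\restriction\alpha$ (Promise~\ref{promise4}), so no antichain is sealed there;
\item the preliminary reduction puts into $\mathcal C_\alpha$ a club of order-type $\cf(\alpha)$ whose non-accumulation points are successor ordinals. The generic branch may choose it, and it can never hit a set of limit ordinals such as the set $B$ of Claim~\ref{c233}.
\end{itemize}
So $\mathcal S=\{E^{\lambda^+}_{<\lambda}\}$ (or $\{\lambda^+\}$) is exactly the case the theorem cannot handle; for $\lambda=\aleph_0$ it is not even stationary. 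The mismatch you found is not real. With $\chi=\lambda$, your initial instinct $\mathcal S=\{E^{\lambda^+}_\lambda\}$ satisfies the intended hypothesis, since $E^{\lambda^+}_\lambda\cap E^{\lambda^+}_{<\lambda}=\emptyset$.

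Independently, the input you ``grant'' is left unproved, and it is not what the literature provides. What is available, and what the paper uses, is \cite[Theorem~5.6]{paper22}. From $\lambda^{<\lambda}=\lambda$ and $\diamondsuit(E^{\lambda^+}_\lambda)$ it gives $\p_\lambda(\lambda^+,2,\sqleft{\lambda},\lambda^+,\{E^{\lambda^+}_\lambda\},2,1)$, a $\lambda$-bounded $C$-sequence that hits on $E^{\lambda^+}_\lambda$. Using $\lambda^{<\lambda}=\lambda$ again, you may enlarge each $\mathcal C_\alpha$ by the traces $C_\beta\cap\alpha$ for $\alpha\in\acc(C_\beta)$. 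This yields a $\sq$-coherent (hence $\sq_\lambda$-coherent) sequence of width ${\le}\lambda$. No $\alpha\in E^{\lambda^+}_\lambda$ is an accumulation point of a set of order-type ${\le}\lambda$, so nothing is added on $E^{\lambda^+}_\lambda$ and the hitting there survives. This is precisely the hypothesis of Theorem~\ref{pro1} with $\chi=\xi=\lambda$, $\theta=\lambda^+$, $\mathcal S=\{E^{\lambda^+}_\lambda\}$ and $\sigma=1$.

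With that correction, the rest of your outline agrees in substance with the paper: reading off $\square^B_\lambda$, using Remark~\ref{rem1} for $\lambda=\aleph_1$, and invoking the super-Souslin construction of \cite{paper31} in the extension, where $\lambda^{<\lambda}=\lambda$ and $2^\lambda=\lambda^+$ persist.
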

\begin{proof} By \cite[Theorem~5.6]{paper22}, the hypotheses imply that
$\p_\lambda(\lambda^+,2,\allowbreak\mathcal R,\allowbreak\lambda^+,\{E^{\lambda^+}_\lambda\},2,\sigma)$ holds for
$\mathcal R:={\sqleft{\lambda}}$ and every $\sigma<\lambda$, in particular, $\sigma:=1$.
As $\lambda^{<\lambda}=\lambda$, it follows that
$\p_\lambda(\lambda^+,\lambda^+,\mathcal R,\lambda^+,\{E^{\lambda^+}_\lambda\},2,1)$ holds
for $\mathcal R:={\sq}$, in particular, for $\mathcal R:={\sq_\lambda}$.
By Theorem~\ref{pro1}, then, there exists a streamlined $\lambda$-complete $\lambda^+$-Souslin tree $\mathbf T$
such that in the forcing extension by $\mathbf T$,
$\p_\lambda(\lambda^+,2,{\sq_\lambda},\lambda^+,\{E^{\lambda^+}_{\lambda}\},2,1)$ holds.
In particular, in the extension, there is a $\lambda$-bounded $\sq_\lambda$-coherent $C$-sequence over $\lambda^+$, i.e., $\square^B_\lambda$ holds.

Finally, Clause~\ref{cls6} follows from the fact that $\square_{\aleph_1}$ and $\square^B_{\aleph_1}$ are logically equivalent.
Clause~\ref{cls7} follows from Corollary~1.7 of \cite{paper31} together with the sentence prior to Theorem~$B''$ of the same paper.
\end{proof}

\begin{cor} Suppose $\lambda=\lambda^{<\lambda}$ is an uncountable cardinal and $\diamondsuit(E^{\lambda^+}_\lambda)$ holds.
Then there is a countably complete $\lambda^+$-Souslin tree $\mathbf T$
that forces $\square_\lambda$.
\end{cor}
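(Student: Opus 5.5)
The plan is to run the proof of Corollary~\ref{cor2} with $\chi:=\aleph_1$ in place of $\chi:=\lambda$ when invoking Theorem~\ref{pro1}, and then apply Remark~\ref{rem1} in the generic extension.

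First, by \cite[Theorem~5.6]{paper22}, the hypotheses $\lambda^{<\lambda}=\lambda$ and $\diamondsuit(E^{\lambda^+}_\lambda)$ yield $\p_\lambda(\lambda^+,2,{\sqleft{\lambda}},\lambda^+,\{E^{\lambda^+}_\lambda\},2,1)$, exactly as in Corollary~\ref{cor2}. In particular this gives a $\lambda$-bounded $\sq$-coherent $\mathcal C$-sequence with the hitting property, hence also a $\sq_{\aleph_1}$-coherent one, since $\sq$ refines $\sq_{\aleph_1}$. We may pass from width $2$ to width $\lambda^+$ trivially, and $\diamondsuit(\lambda^+)$ follows from $\diamondsuit(E^{\lambda^+}_\lambda)$. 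This gives $\p_\lambda(\lambda^+,\lambda^+,\sq_{\aleph_1},\lambda^+,\{E^{\lambda^+}_\lambda\},\lambda^+,1)$.

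Next, we check the hypotheses of Theorem~\ref{pro1} with $\kappa:=\lambda^+$ and $\chi:=\aleph_1$. The arithmetic condition requires $\nu^{\aleph_0}<\lambda^+$ for every $\nu<\lambda^+$. Since $\lambda$ is uncountable and $\lambda^{<\lambda}=\lambda$, we have $\lambda^{\aleph_0}=\lambda$, so $\nu^{\aleph_0}\le\lambda^{\aleph_0}=\lambda$. The family condition asks that $E^{\lambda^+}_\lambda\setminus E^{\lambda^+}_{<\aleph_1}$ be nonstationary. This fails: $E^{\lambda^+}_\lambda$ consists entirely of uncountable cofinalities. This is the main obstacle.

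To get around it, I would check where this hypothesis is actually used in the proof of Theorem~\ref{pro1}. It only serves to let us assume that each $\mathcal C_\alpha$ with $\cf(\alpha)<\chi$ contains a club of order type $\cf(\alpha)$ with successor nonaccumulation points. At such $\alpha$ no hitting is required, because $\mathcal S$ lives on $E^{\lambda^+}_\lambda$. So we can enlarge $\mathcal C_\alpha$ for $\alpha\in E^{\lambda^+}_\omega$ (there are at most $\lambda^{\aleph_0}=\lambda$ relevant clubs) without hurting the hitting property on $E^{\lambda^+}_\lambda$ or $\sq_{\aleph_1}$-coherence. Such an $\omega$-type club is $\sq_{\aleph_1}$-related to anything. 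If the hypothesis is literally needed, the honest reading is that what the proof uses is that the hitting set $S$ avoids $E_{<\chi}$, which holds here.

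Then Theorem~\ref{pro1} gives a streamlined $\aleph_1$-complete (that is, countably complete) $\lambda^+$-Souslin tree $\mathbf T$. In its extension there is a $\lambda$-bounded $\sq_{\aleph_1}$-coherent $C$-sequence over $\lambda^+$. By Remark~\ref{rem1}, $\square_\lambda$ then holds in the extension.
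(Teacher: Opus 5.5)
Your overall route is the paper's: obtain the proxy principle with $\sq$-coherence (hence $\sq_{\aleph_1}$-coherence) and width $\lambda^+$, apply Theorem~\ref{pro1} with $\chi:=\aleph_1$ to get a countably complete tree, and finish with Remark~\ref{rem1} in the extension. Your check that $\nu^{\aleph_0}\le\lambda^{<\lambda}=\lambda$ is correct. Your diagnosis of the $\mathcal S$-hypothesis is also correct: as printed, it is a typo for ``$S\cap E^\kappa_{<\chi}$ is nonstationary'', which is what the proof actually uses. It is used for the modification of $\mathcal C_\alpha$ at $\alpha\in E^\kappa_{<\chi}$, and also, which you did not mention, in Claim~\ref{c233}, where the hitting point must lie in $E^\kappa_{\ge\chi}$ so that $T_\alpha$ consists of the nodes $\mathbf b^C_x$. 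Since $E^{\lambda^+}_\lambda\s E^{\lambda^+}_{\ge\aleph_1}$, this condition holds here.

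The genuine gap is in your first step. From $\p_\lambda(\lambda^+,2,\sqleft{\lambda},\lambda^+,\{E^{\lambda^+}_\lambda\},2,1)$ you infer ``in particular'' a $\sq$-coherent sequence, and you treat the passage from width $2$ to width $\lambda^+$ as mere monotonicity. But $\sqleft{\lambda}$ is not $\sq$: it is $\sq$ with an escape clause at points of cofinality below $\lambda$. For a $\lambda$-bounded sequence over $\lambda^+$, every accumulation point $\beta$ of every $C_\alpha$ satisfies $\cf(\beta)\le\otp(C_\alpha\cap\beta)<\lambda$. So you have no guarantee that $C_\beta\sq C_\alpha$, not even when $\otp(C_\alpha)\ge\omega_1$, and that is exactly what $\sq_{\aleph_1}$-coherence demands. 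Hence Theorem~\ref{pro1} cannot yet be invoked.

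The missing step is the one the paper performs with ``As $\lambda^{<\lambda}=\lambda$, it follows that\ldots''. For each $\beta$ of cofinality $<\lambda$, replace $\{C_\beta\}$ by $\{C_\beta\}\cup\{C_\alpha\cap\beta\mid \alpha<\lambda^+,\ \beta\in\acc(C_\alpha)\}$. These are closed cofinal subsets of $\beta$ of order-type $<\lambda$, so there are at most $\lambda^{<\lambda}=\lambda$ of them. The new $\mathcal C$-sequence is $\lambda$-bounded and $\sq$-coherent: for $\gamma\in\acc(C_\alpha\cap\beta)$, the set $C_\alpha\cap\gamma$ was placed at $\gamma$. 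It is also unchanged on $E^{\lambda^+}_\lambda$, so the hitting property there, with singletons, survives. This yields $\p_\lambda(\lambda^+,\lambda^+,\sq,\lambda^+,\{E^{\lambda^+}_\lambda\},2,1)$, after which your remaining steps go through as written.
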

\begin{proof} As explained in the proof of Corollary~\ref{cor2},
$\p_\lambda(\lambda^+,\lambda^+,{\sq},\lambda^+,\{E^{\lambda^+}_\lambda\},\allowbreak2,1)$ holds.
In particular, $\p_\lambda(\lambda^+,\lambda^+,{\sq_{\aleph_1}},\lambda^+,\{E^{\lambda^+}_\lambda\},\allowbreak2,1)$ holds.
By Theorem~\ref{pro1}, then, there exists a countably complete $\lambda^+$-Souslin tree $\mathbf T$
such that in the forcing extension by $\mathbf T$,
$\p_\lambda(\lambda^+,2,{\sq_{\aleph_1}},\lambda^+,\{E^{\lambda^+}_{\lambda}\},2,1)$ holds.
In particular, in the extension, there is a $\lambda$-bounded $\sq_{\aleph_1}$-coherent $C$-sequence over $\lambda^+$.
By Remark~\ref{rem1}, then, $\square_\lambda$ holds.
\end{proof}

The next corollary establishes Theorem~\ref{thmd}.

\begin{cor}\label{cor1}
Suppose $\lambda=2^{<\lambda}<2^\lambda=\lambda^+$ is uncountable cardinal,
and there exists a stationary subset of $E^{\lambda^+}_{\neq\cf(\lambda)}$ that does not reflect.
\begin{enumerate}[label=\textup{(\arabic*)}]
\item\label{cls4} If $\lambda$ is regular or if $\square^*_\lambda$ holds,
then there is a $\lambda^+$-Souslin tree that forces $\square_\lambda$;
\item\label{cls5} If $\lambda$ is regular, then there is a $\lambda$-complete $\lambda^+$-Souslin tree that forces $\square(\lambda^+)$.
\end{enumerate}
\end{cor}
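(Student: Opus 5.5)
Both clauses reduce to Theorem~\ref{pro1}, once we obtain the appropriate instance of the proxy principle $\p$ (with $\mu=\kappa$) from the hypotheses. That instance has two ingredients: $\diamondsuit(\lambda^+)$ and the guessing sequence $\p^-$.

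For the diamond ingredient, $2^\lambda=\lambda^+$ with $\lambda$ uncountable gives $\diamondsuit(\lambda^+)$ by Shelah's theorem. So in each clause it remains to produce the relevant $\p^-$ instance. For this I would cite the results of \cite{paper23} (or \cite{paper22}) that derive proxy principles from a nonreflecting stationary set together with $\lambda^{<\lambda}=\lambda$.

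\textbf{Clause~(1).} I would obtain $\p_\lambda(\lambda^+,\lambda^+,{\sq},1,\{\lambda^+\},\lambda^+,1)$ and then invoke Corollary~\ref{cor3}.
\begin{itemize}
\item If $\lambda$ is regular, then $2^{<\lambda}=\lambda$ gives $\lambda^{<\lambda}=\lambda$. A nonreflecting stationary subset of $E^{\lambda^+}_{<\lambda}$, combined with the diamond, should yield a $\lambda$-bounded $\sq$-coherent $\mathcal C$-sequence with fewer than $\lambda^+$ clubs at each level. This uses the standard fact that nonreflecting stationary sets allow one to build a $\square_\lambda^*$-like system carrying the hitting property. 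The exact cited theorem would be the one in \cite{paper23} stating that $\square^*_\lambda$ (or just $\lambda^{<\lambda}=\lambda$ for regular $\lambda$) plus $\diamondsuit$ over a nonreflecting stationary subset of $E^{\lambda^+}_{\neq\cf(\lambda)}$ gives $\p_\lambda(\lambda^+,\lambda^+,\sq,1,\{\lambda^+\},\lambda^+,1)$.
\item If $\lambda$ is singular, $\square^*_\lambda$ supplies the $\lambda^+$-sized width, so the same cited result applies.
\end{itemize}
The main obstacle is converting plain $\diamondsuit(\lambda^+)$ into diamond on the nonreflecting set $S$. From $2^\lambda=\lambda^+$, Shelah's theorem gives $\diamondsuit(S)$ for every stationary $S\s E^{\lambda^+}_{\neq\cf(\lambda)}$. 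This is exactly why the hypothesis is phrased with $\neq\cf(\lambda)$, and I would point this out explicitly.

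\textbf{Clause~(2).} Here $\lambda$ is regular, so $\lambda^{<\lambda}=\lambda$. I would use the instance $\p(\lambda^+,\lambda^+,\sq,1,\{\lambda^+\},\lambda^+,1)$, without the $\lambda$-bound. To get $\lambda$-completeness we need $\chi=\lambda$ in Theorem~\ref{pro1}, and that needs two things.
\begin{itemize}
\item The cardinal arithmetic $\nu^{<\lambda}<\lambda^+$ for every $\nu<\lambda^+$. This follows from $\lambda^{<\lambda}=\lambda$.
\item The stationary set in $\mathcal S$ must be essentially contained in $E^{\lambda^+}_{<\lambda}$. This is arranged by taking $\mathcal S=\{E^{\lambda^+}_{<\lambda}\}$, or by using that the proxy principle with $\mathcal S=\{\lambda^+\}$ yields the version with $\mathcal S=\{E^{\lambda^+}_{<\lambda}\}$ (standard, since the hitting property for $\{\lambda^+\}$ can be relativized to the stationary set $E^{\lambda^+}_{<\lambda}$; for this I would cite \cite{paper23}).
\end{itemize}
Since ${\sq}\s{\sq_\lambda}$, we get $\p(\lambda^+,\lambda^+,\sq_\lambda,1,\{E^{\lambda^+}_{<\lambda}\},\lambda^+,1)$. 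Theorem~\ref{pro1} then yields a $\lambda$-complete $\lambda^+$-Souslin tree forcing $\p(\lambda^+,2,\sq_\lambda,1,\{E^{\lambda^+}_{<\lambda}\},2,1)$.

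In the extension, to get $\square(\lambda^+)$ I would argue as in Remark~\ref{rem1}. Below $\lambda$, the $\sq_\lambda$-coherence fails only on clubs of order-type less than $\lambda$, whose $\nacc$ consists of successors. So one redefines the sequence at small cofinalities using clubs from higher points, obtaining a $\sq$-coherent sequence. Unthreadability survives because the hitting property (as in Claim~\ref{cla1}) places $\alpha$ in $\acc(D)$ with $C_\alpha\neq D\cap\alpha$. If this redefinition proves delicate, the fallback is \cite[Lemma~3.2]{paper22} applied as in Corollary~\ref{cor5}, replacing $\sq_\lambda$ by $\sq$ provided Theorem~\ref{pro1} can be run with $\chi=\lambda$ and the relation ${\sq}$ directly.
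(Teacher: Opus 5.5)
Your Clause~(1) follows the paper's route. The paper likewise just quotes an instance of the proxy principle and applies Theorem~\ref{pro1} with $\chi=\omega$; its source is \cite[Corollary~3.15]{paper32}, which gives $\p_\lambda(\lambda^+,\lambda^+,{\sq},{<}\lambda,\{E^{\lambda^+}_\eta\},2,{<}\lambda)$ under either hypothesis.

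Clause~(2) has a genuine gap: your choice $\mathcal S=\{E^{\lambda^+}_{<\lambda}\}$ is the wrong one. You read the hypothesis of Theorem~\ref{pro1} literally, but it is misprinted and should read ``$S\cap E^\kappa_{<\chi}$ is nonstationary.'' Two things in the paper show this. The recommended special case $\chi=\omega$, $\mathcal S=\{\kappa\}$ already violates the literal wording. And the proof of Claim~\ref{c233} takes its hitting point $\alpha$ in $E^\kappa_{\ge\chi}$.

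The point is substantive. With $\chi=\lambda$ the tree is $\lambda$-complete, so every limit level $T_\alpha$ with $\cf(\alpha)<\lambda$ consists of all branches through $T\restriction\alpha$, and nothing can be sealed there. Antichains are sealed only at levels $\alpha\in E^{\lambda^+}_\lambda$, where $T_\alpha$ consists of the nodes $\mathbf b^C_x$. This requires every $C\in\mathcal C_\alpha$ at such an $alpha$ to hit the diamond-guessed set. Moreover, the small clubs the proof adds to $\mathcal C_\alpha$ for $\cf(\alpha)<\chi$ would destroy any hitting at those points anyway. Your proposed ``relativization'' of hitting from $\{\lambda^+\}$ to a prescribed stationary set is also not automatic: hitting at stationarily many $\alpha<\lambda^+$ says nothing about where those $\alpha$ lie.

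The missing input is exactly where the nonreflecting stationary set earns its keep. The paper gets $\p(\lambda^+,\lambda^+,\sq^*,1,\{E^{\lambda^+}_\lambda\},2,1)$ from \cite[Theorem~A]{paper32} and upgrades $\sq^*$ to ${\sq}$ via \cite[Corollary~4.36]{paper23}. It then runs Theorem~\ref{pro1} with $\chi=\lambda$ and $\mathcal S=\{E^{\lambda^+}_\lambda\}$.

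Your last step has the same defect. A repair in the style of Remark~\ref{rem1} preserves unthreadability only if the hitting points are among the indices you leave untouched, i.e.\ in $E^{\lambda^+}_\lambda$. With your $\mathcal S$, the hitting points are precisely the indices you modify.

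Separately, for $\lambda>\aleph_1$ the exempt clubs may have uncountable order type with accumulation points. So coherent replacements are not free as they are in Remark~\ref{rem1}, where $\omega$-type clubs have none. The paper instead applies \cite[Lemma~3.2]{paper22} to the $\sq_\lambda$-coherent instance with hitting on $E^{\lambda^+}_\lambda$.

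Your fallback of running Theorem~\ref{pro1} ``with $\chi=\lambda$ and ${\sq}$'' is not available either. At limit levels of cofinality $<\lambda$ the generic branch may choose exempt clubs, so the output is only $\sq_\lambda$-coherent.
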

\begin{proof} \ref{cls4} If $\lambda$ is a regular cardinal then the cardinal arithmetic hypothesis implies that $\square^*_\lambda$ holds.
Thus, by \cite[Corollary~3.15]{paper32}, either of the hypothesis of Clause~\ref{cls4} imply that
$\p_\lambda(\lambda^+,\lambda^+,{\sq},{<}\lambda,\allowbreak\{E^{\lambda^+}_\eta\},2,{<}\lambda)$ holds for some infinite regular cardinal $\eta<\lambda$.
In particular,
$\p_\lambda(\lambda^+,\lambda^+,{\sq},1,\{\lambda^+\},\allowbreak\lambda^+,1)$ holds.
By Theorem~\ref{pro1}, then, there exists a streamlined $\lambda^+$-Souslin tree $\mathbf T$
such that in the forcing extension by $\mathbf T$,
$\p_{\lambda}(\lambda^+,1,{\sq},2,\allowbreak\{\lambda^+\},2,1)$ holds.
In particular, in the extension, there is a $\lambda$-bounded $\sq$-coherent $C$-sequence over $\lambda^+$, i.e., $\square_\lambda$ holds.

\ref{cls5}
By \cite[Theorem~A]{paper32}, the hypotheses imply that
$\p(\lambda^+,\lambda^+,\allowbreak\mathcal R,\allowbreak1,\{E^{\lambda^+}_\lambda\},2,1)$ holds for
$\mathcal R:={\sq^*}$.
By \cite[Corollary~4.36]{paper23}, it moreover holds for $\mathcal R:={\sq}$.
By Theorem~\ref{pro1}, then, there exists a streamlined $\lambda$-complete $\lambda^+$-Souslin tree $\mathbf T$
such that in the forcing extension by $\mathbf T$,
$\p(\lambda^+,1,{\sq_\lambda},2,\{E^{\lambda^+}_{\lambda}\},2,1)$ holds.
By \cite[Lemma~3.2]{paper22}, then, $\square(\lambda^+)$ holds in the extension.
\end{proof}

To put the next corollary in context, recall that
Laver proved (see \cite{Sh:104}) that in the generic extension after L\'evy-collapsing a weakly compact cardinal to $\aleph_2$,
there are no $\aleph_2$-Aronszajn trees with an $\omega$-ascent path,
and that Todor{\v{c}}evi{\'c} proved \cite[Theorem~4.4]{MR631563} that in the same generic extension,
no $\aleph_2$-Aronszajn tree is coherent.
\begin{cor}\label{cor4}
Suppose $2^{2^{\aleph_0}}=\aleph_2$ and $E^{\aleph_2}_{\aleph_0}$ admits a nonreflecting stationary subset.
Then there is an $\aleph_1$-complete $\aleph_2$-Souslin tree $\mathbf T$ such that:
\begin{enumerate}[label=\textup{(\arabic*)}]
\item\label{cls8} $\mathbf T$ forces the existence of a uniformly coherent $\aleph_2$-Souslin tree;
\item\label{cls9} $\mathbf T$ forces the existence of an $\aleph_2$-Souslin tree with an $\omega$-ascent path.
\end{enumerate}
\end{cor}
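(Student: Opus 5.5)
The plan is to follow the pattern of Corollary~\ref{cor1}\ref{cls5}, now with $\lambda:=\aleph_1$ and $\kappa:=\aleph_2$. The hypothesis $2^{2^{\aleph_0}}=\aleph_2$ yields $\ch$, since otherwise $2^{\aleph_0}=\aleph_2$ and then $2^{2^{\aleph_0}}>\aleph_2$. So we have $\aleph_1=2^{<\aleph_1}<2^{\aleph_1}=\aleph_2$, and together with the nonreflecting stationary subset of $E^{\aleph_2}_{\aleph_0}$ (note $\aleph_0\neq\cf(\aleph_1)$) the hypotheses of Corollary~\ref{cor1} are met.

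First, invoke \cite[Theorem~A]{paper32} followed by \cite[Corollary~4.36]{paper23}, exactly as in the proof of Corollary~\ref{cor1}\ref{cls5}. This gives $\p(\aleph_2,\aleph_2,{\sq},1,\{E^{\aleph_2}_{\aleph_1}\},\aleph_2,1)$. Now apply Theorem~\ref{pro1} with $\chi:=\aleph_1$; the needed arithmetic $\nu^{\aleph_0}<\aleph_2$ for all $\nu<\aleph_2$ follows from $\ch$. Since $E^{\aleph_2}_{\aleph_1}\setminus E^{\aleph_2}_{<\aleph_1}$ is nonstationary, the theorem produces a streamlined $\aleph_1$-complete $\aleph_2$-Souslin tree $\mathbf T$. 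In $V^{\mathbf T}$, it yields a $\sq_{\aleph_1}$-coherent $\mathcal C$-sequence with singletons, satisfying the hitting property relative to $E^{\aleph_2}_{\aleph_1}$, together with $\diamondsuit(\aleph_2)$. Because every $C_\alpha$ with $\cf(\alpha)=\aleph_1$ has order type at least $\omega_1$, genuine coherence ($\sq$) holds at all accumulation points of uncountable cofinality. So, as in the cited \cite[Lemma~3.2]{paper22}, we get $\p(\aleph_2,2,{\sq},1,\{E^{\aleph_2}_{\aleph_1}\},2,1)$ in the extension, after redefining $C_\alpha$ at countable cofinalities along the lines of Remark~\ref{rem1}. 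In particular, $\square(\aleph_2)$ holds there, and so does $\p(\aleph_2,2,\sq,1,\{\aleph_2\},2,1)$.

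Next, derive \ref{cls8} and \ref{cls9} inside $V^{\mathbf T}$ from this proxy instance by appealing to the known constructions of Brodsky and Rinot. From $\p(\kappa,2,\sq,1,\{\kappa\},2,1)$, a $\sq$-coherent $C$-sequence with diamond, one builds a uniformly coherent $\kappa$-Souslin tree by the standard walks-on-ordinals/coherent-tree construction from the paper22/paper26 series. The same principle, using the $\omega$-bounded ``ascent'' variant of the microscopic construction, gives a $\kappa$-Souslin tree with an $\omega$-ascent path, since $\kappa=\aleph_2$ and $\omega<\aleph_1$. I would cite the relevant results (e.g.\ \cite{paper22} for coherent trees and \cite{paper23} for ascent paths) rather than rebuild the trees.

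The main obstacle I expect is bookkeeping of the parameters. We must check that the specific proxy instance in the extension meets the hypotheses of the cited tree constructions: the hitting set, whether $\sq$ rather than $\sq_{\aleph_1}$ is needed, and $\diamondsuit(\aleph_2)$ in the extension, which the proof of Theorem~\ref{pro1} shows is preserved. If the ascent-path construction needs hitting on $E^{\aleph_2}_{\aleph_0}$ or on all of $\aleph_2$, I would instead start from the instance with $\mathcal S=\{\aleph_2\}$ supplied via \cite[Corollary~3.15]{paper32} as in Corollary~\ref{cor1}\ref{cls4}. That instance is compatible with $\chi=\aleph_1$, because $\aleph_2\setminus E^{\aleph_2}_{<\aleph_1}=E^{\aleph_2}_{\aleph_1}$ is stationary, so Theorem~\ref{pro1} cannot be applied with $\mathcal S=\{\aleph_2\}$ and $\chi=\aleph_1$. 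In that case I would fall back to $\mathcal S=\{E^{\aleph_2}_{\aleph_0}\}$.
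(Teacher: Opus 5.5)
Your first half matches the paper. CH follows from $2^{2^{\aleph_0}}=\aleph_2$, and Theorem~\ref{pro1} with $\chi=\aleph_1$, as in Corollary~\ref{cor1}\ref{cls5}, gives an $\aleph_1$-complete $\aleph_2$-Souslin tree $\mathbf T$ forcing $\p(\aleph_2,2,{\sq_{\aleph_1}},1,\{E^{\aleph_2}_{\aleph_1}\},2,1)$, equivalently $\p(\aleph_2,2,{\sq},1,\{E^{\aleph_2}_{\aleph_1}\},2,1)$.

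\textbf{The gap is in Clause~\ref{cls8}.} You claim that $\p(\kappa,2,{\sq},1,\{\kappa\},2,1)$ yields a uniformly coherent $\kappa$-Souslin tree by a ``standard'' construction, but you name no such theorem. (Walks on ordinals give coherent Aronszajn trees, not Souslin ones.) The result the paper relies on, \cite[Theorem~6.35]{paper23}, is invoked with $\theta=\aleph_2$, i.e., for $\p(\aleph_2,2,{\sq},\aleph_2,\{\aleph_2\},2,1)$, and the paper goes out of its way to obtain that value. Roughly, in a coherent tree every node of $T_\alpha$ differs in finitely many coordinates from the canonical node built along $C_\alpha$. So sealing an antichain at level $\alpha$ means steering that single node through $|\alpha|$-many derived antichains, which is what hitting every $B_i$ with $i<\alpha$ provides. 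Your bookkeeping paragraph never mentions $\theta$. The missing step is \cite[Theorem~3.11(3)]{paper28}, which upgrades $\p(\aleph_2,2,{\sq},1,\{E^{\aleph_2}_{\aleph_1}\},2,1)$ to $\p(\aleph_2,2,{\sq},\aleph_2,\{E^{\aleph_2}_{\aleph_1}\},2,1)$. The paper applies it while the hitting is still concentrated on $E^{\aleph_2}_{\aleph_1}$, and only afterwards weakens $\mathcal S$ to $\{\aleph_2\}$; you discard that concentration immediately.

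\textbf{Clause~\ref{cls9}.} The paper does not build this tree from the $\theta=1$ instance either. It cites \cite[Corollary~6.12]{paper23}, which needs $\square(\aleph_2)$ together with $2^{2^{\aleph_0}}=\aleph_2$ in $V^{\mathbf T}$. You have the former. What you must add is that the latter survives: $\mathbf T$ is $\aleph_2$-cc, $\aleph_2$-distributive and of size $\aleph_2$, so it adds no subsets of $\omega_1$ and preserves cardinals. Your stated reason, ``$\omega<\aleph_1$'', is not the relevant input.

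\textbf{Your fallback is backwards.} The hypothesis on $\mathcal S$ in Theorem~\ref{pro1} has to be read as ``$S\cap E^\kappa_{<\chi}$ is nonstationary''. The proof enlarges $\mathcal C_\alpha$ at cofinalities ${<}\chi$, which is harmless only if hitting is never demanded there; the printed $S\setminus E^\kappa_{<\chi}$ would make the theorem vacuous already for $\chi=\omega$. Under that reading, your main application with $\mathcal S=\{E^{\aleph_2}_{\aleph_1}\}$ is legitimate, even though your sentence that $E^{\aleph_2}_{\aleph_1}\setminus E^{\aleph_2}_{<\aleph_1}$ is nonstationary is false as written. By contrast, $\mathcal S=\{E^{\aleph_2}_{\aleph_0}\}$ with $\chi=\aleph_1$ is exactly the excluded case: at countable cofinalities the generic branch may pick one of the added clubs of order-type $\omega$, for which no hitting is available.
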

\begin{proof} As seen in the proof of Corollary~\ref{cor1}\ref{cls5},
there exists a streamlined countably complete $\aleph_2$-Souslin tree $\mathbf T$
such that in the forcing extension by $\mathbf T$,
$\p(\aleph_2,2,{\sq_{\aleph_1}},1,\{E^{\aleph_2}_{\aleph_1}\},2,1)$ holds.
Equivalently, in this case,
$\p(\aleph_2,2,\allowbreak{\sq},\allowbreak1,\{E^{\aleph_2}_{\aleph_1}\},2,1)$ holds.
By \cite[Theorem~3.11(3)]{paper28}, moreover,
$\p(\aleph_2,2,\allowbreak{\sq},\allowbreak\aleph_2,\{E^{\aleph_2}_{\aleph_1}\},2,1)$ holds.

\ref{cls8} By \cite[Theorem~6.35]{paper23},
$\p(\aleph_2,2,\allowbreak{\sq},\allowbreak\aleph_2,\{\aleph_2\},2,1)$
is sufficient for the construction of a uniformly coherent $\aleph_2$-Souslin tree in the extension.

\ref{cls9} By \cite[Corollary~6.12]{paper23}, $\square(\aleph_2)$ together with $2^{2^{\aleph_0}}=\aleph_2$
suffices for the construction of $\aleph_2$-Souslin tree with an $\omega$-ascent path.
\end{proof}

\begin{cor} Suppose $\kappa$ is a subtle cardinal and $\p(\kappa,\kappa,{\sq},1,\{\kappa\},\kappa,1)$ holds.
Then there is $\kappa$-Souslin tree that forces the existence of a full $\kappa$-Souslin tree.
\end{cor}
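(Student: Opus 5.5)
Since the statement asks for a $\kappa$-Souslin tree forcing something that is obtained from a square-like principle, the plan mirrors Corollary~\ref{cor5}. First apply Theorem~\ref{pro1} with $\chi:=\omega$, $\xi:=\kappa$, $\theta:=1$, $\mathcal S:=\{\kappa\}$, $\sigma:=1$. The hypotheses hold: $\nu^{<\omega}=\nu<\kappa$ for all $\nu<\kappa$. Also $\kappa\setminus E^\kappa_{<\omega}$ consists of limit ordinals and is therefore \emph{not} nonstationary, so we cannot take $\mathcal S=\{\kappa\}$ literally. We instead take $\chi:=\omega_1$ and use $\nu^{\aleph_0}<\kappa$, which holds because a subtle cardinal is strongly inaccessible. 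The set $\kappa\setminus E^\kappa_{<\omega_1}$ is still stationary, though, so we rather use $\chi:=\kappa$. This is legitimate because $\nu^{<\kappa}<\kappa$ fails only if $\kappa$ is not inaccessible, and here $\kappa$ is inaccessible. With $\sq_\kappa={\sq}$ on clubs of order type $<\kappa$ (and $\sq_\kappa$ is weaker than $\sq$), the given $\p(\kappa,\kappa,\sq,1,\{\kappa\},\kappa,1)$ yields $\p(\kappa,\kappa,\sq_\kappa,1,\{\kappa\},\kappa,1)$. Theorem~\ref{pro1} then gives a $\kappa$-complete streamlined $\kappa$-Souslin tree $\mathbf T$ forcing $\p(\kappa,2,\sq_\kappa,1,\{\kappa\},2,1)$.

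If the $\sq_\kappa$ variant turns out to be awkward, the fallback is to invoke Corollary~\ref{cor5} directly: we get a $\kappa$-Souslin tree $\mathbf T$ forcing $\p(\kappa,2,\sq,1,\{\kappa\},2,1)$, and in particular $\diamondsuit(\kappa)$, which the proof of Theorem~\ref{pro1} preserves via the sequence $\langle Z_\beta\mid\beta<\kappa\rangle$.

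Next, verify that subtlety of $\kappa$ survives forcing with $\mathbf T$. Subtlety is a $\Pi^1_1$-type property about sequences $\langle A_\alpha\mid\alpha\in D\rangle$ with $A_\alpha\s\alpha$ and clubs $D$. Take a $\mathbf T$-name for such a sequence together with a club. By the $\kappa$-cc and $|\mathbf T|=\kappa$, code the names by a ground-model sequence: let $\dot A_\alpha$ be decided by the restriction of the name to $T\restriction\alpha$ on a club of $\alpha$'s. This uses the fact that $\mathbf T\restriction\alpha$ has size $<\kappa$ and that, for a club of $\alpha$, the levels below $\alpha$ form a regular suborder, which follows from the $\kappa$-cc and the normality of the tree. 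Then apply subtlety in $V$ to a pair $\beta<\alpha$ with coherent coded data, and check that some condition forces $\dot A_\alpha\cap\beta=\dot A_\beta$. This preservation step is the main obstacle. A cleaner route would be to avoid preservation entirely, if the cited full-tree construction only needs diamond-type hypotheses that hold in $V[G]$ plus subtlety reflected from $V$.

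Finally, in $V[G]$ we have $\p(\kappa,2,\sq,1,\{\kappa\},2,1)$ (hence $\square(\kappa)$ and $\diamondsuit(\kappa)$) and $\kappa$ subtle. We then invoke the known theorem of \cite{paper23} that, at a subtle cardinal, $\p(\kappa,2,\sq,1,\{\kappa\},2,1)$ suffices for constructing a full $\kappa$-Souslin tree, i.e., one whose limit levels each have nonempty vanishing branches. The narrow width $\mu=2$ is what that construction needs, and this is exactly what the Souslin tree added.
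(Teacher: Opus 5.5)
The genuine gap is the preservation of subtlety, on which your final step depends. You do not prove it, and the sketch you give fails in two places. First, $T\restriction\alpha$ is never a regular suborder of $T$ at a limit level $\alpha$. The tree of Theorem~\ref{pro1} splits below every limit, so for $t\in T_\alpha$ the nodes of $T\restriction\alpha$ not below $t$ form a dense set. A maximal antichain of $T\restriction\alpha$ chosen inside that set is not maximal in $T$, since $t$ is incompatible with all its elements. Hence $\dot A_\alpha$ is not decided inside $T\restriction\alpha$; the nodes deciding it sit above level $\alpha$. Second, suppose you code pairs (deciding node, decided value) as subsets of $\alpha$ and apply subtlety in $V$. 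You get $\beta<\alpha$ with matching codes, but nothing makes the two deciding conditions compatible. To guarantee compatibility for whichever pair subtlety returns, the chosen conditions would have to be pairwise compatible, i.e., form a $\kappa$-chain in $V$, which a Souslin tree does not have. This is exactly where the standard preservation argument for ${<}\kappa$-closed forcing breaks down.

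The paper never tries to preserve subtlety. It uses subtlety only in $V$, via \cite[Proposition~3.7]{paper62}, to get a stationary set $S$ of regular uncountable cardinals with $\diamondsuit^*_S(\kappa\textup{-trees})$. It then proves that this guessing principle survives forcing with any streamlined $\kappa$-Souslin tree. Finally, it applies \cite[Theorem~4.1]{paper62} in the extension, using $\p(\kappa,2,\sq,1,\{\kappa\},2,1)$ together with $\diamondsuit^*_S(\kappa\textup{-trees})$. The preservation works because the principle speaks about \emph{every} node of a level, so no coherent choice of conditions is needed:
\begin{itemize}
\item From a name $\dot Y$, build in $V$ the streamlined $\kappa$-tree $X$ of pairs $(x^0,x^1)$, with $x^0\in T$ deciding $x^1$ as an approximation to $\dot Y$.
\item On a club of heights, each $y\in Y$ is read off from some $x\in X$ with $x^0\in G$.
\item The ground-model stationary set of $\beta<\alpha$ guessing $x$ stays stationary by $\kappa$-distributivity.
\item Intersecting it with the club of $\beta<\alpha$ (using that $\alpha$ is regular) where $x^1$ agrees with $y$ yields guessing of $y$.
\end{itemize}
This is the missing idea that your ``cleaner route'' only gestures at.

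There are also smaller problems.
\begin{itemize}
\item Your choice $\chi:=\kappa$ is illegitimate. For every $\nu\ge2$ we have $\nu^{<\kappa}\ge2^{<\kappa}\ge\kappa$. Moreover, a $\kappa$-complete $\kappa$-tree always has a cofinal branch, and $\sq_\kappa$-coherence is too weak to give $\square(\kappa)$.
\item Your fallback is fine and is what the paper does: Theorem~\ref{pro1} with $\chi:=\omega$, as in Corollary~\ref{cor5}. There $\sq_\omega$ is just $\sq$, and the proof only needs the hitting points to lie in $E^\kappa_{\ge\chi}$.
\item Your description of fullness is inverted. A full $\kappa$-tree is one each of whose limit levels omits at most one potential branch.
\end{itemize}
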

\begin{proof} We start by fixing some notation.
For each $x\in{}^{<\kappa}H_\kappa$, consider the following options:
\begin{itemize}[label=$\blacktriangleright$]
\item If $\im(x)\s {}^2(H_\kappa)$,
then let $x^0,x^1$ be the unique maps from $\dom(x)$ to $H_\kappa$ such that $x(\gamma)=(x^0(\gamma),x^1(\gamma))$ for every $\gamma<\dom(x)$.
\item Otherwise, let $x_\beta^0,x_\beta^1$ be arbitrary maps from $\dom(x)$ to $H_\kappa$.
\end{itemize}

Next, as $\kappa$ is subtle,
by \cite[Proposition~3.7]{paper62}, we may fix a stationary subset $S$ of $\kappa$ consisting of regular uncountable cardinals
such that $\diamondsuit_S^*(\kappa\textup{-trees})$ holds.
This means that we may fix a sequence $\langle x_\beta\mid\beta<\kappa\rangle$ such that
for every streamlined $\kappa$-tree $X$, there are club many $\alpha\in S$ such that for every $x\in X$ of height $\alpha$,
the set $\{\beta<\alpha\mid x\restriction\beta=x_\beta\}$ is stationary in $\alpha$.

\begin{claim}\label{cla2} Suppose $V[G]$ is a forcing extension obtained by forcing with a given streamlined $\kappa$-Souslin tree $T$ in $V$.
Then $\diamondsuit_S^*(\kappa\textup{-trees})$ holds in $V[G]$.
\end{claim}
\begin{proof} First, note that since $T$ is $\kappa$-distributive,
every stationary subset of a cardinal $\alpha\in S$ remains stationary in $V[G]$.
Second, note that the translation procedure of \cite[\S7.1]{paper20} can be used to show that the restriction of the principle $\diamondsuit_S^*(\kappa\textup{-trees})$ to guessing streamlined subtress of ${}^{<\kappa}2$
is no weaker than the full $\diamondsuit_S^*(\kappa\textup{-trees})$. So it suffices to prove that this restricted version holds in $V[G]$.
Letting $y_\beta:=\bigcup\im(x_\beta^1)$ for every $\beta<\kappa$,
we claim that $\langle y_\beta\mid\beta<\kappa\rangle$ is such a witness.

To this end, let $Y$ be a streamlined $\kappa$-subtree of ${}^{<\kappa}2$ in $V[G]$,
and let $\dot Y$ be a $T$-name for it. Back in $V$,
define $X$ to be the collection of all $x\in {}^{<\kappa}({}^2(H_\kappa))$
such that:
\begin{itemize}
\item $x^0\in T$;
\item for every ordinal $\gamma$ such that $\gamma+1<\dom(x)$, $x^0\restriction\gamma$ forces that $x^1(\gamma)$ is an element of $\dot Y\cap{}^{<\gamma}2$.
\end{itemize}

As $T$ is a $\kappa$-Souslin tree, we may fix a club $D$ in $\kappa$ such that for every $\beta\in D$, every node in $T$ of height $\beta$
decides $\dot Y\restriction\beta$.
It follows that, in $V[G]$, for every $\alpha\in\acc(D)$, for every $y\in Y$ of height $\alpha$,
there is an $x\in X$ of height $\alpha$ such that $x^0\in G$ and $\bigcup(\im(x^1_\alpha))=y$.

Back in $V$, as $T$ is a $\kappa$-tree and as $|{}^\gamma2|<\kappa$ for every $\gamma<\kappa$, $X$ is a streamlined $\kappa$-tree,
so we may fix a club $C$ in $\kappa$ such that for every $\alpha\in S\cap C$ and every $x\in X$ of height $\alpha$,
the set $B_x:=\{\beta<\alpha\mid x\restriction\beta=x_\beta\}$ is stationary in $\alpha$.
Work in $V[G]$ and let $\alpha\in S\cap C\cap \acc(D)$.
Given any $y\in Y$ of height $\alpha$,
we may find an $x\in X$ of height $\alpha$ such that $x^0\in G$ and $\bigcup(\im(x^1))=y$.
As $\alpha$ is a regular uncountable cardinal, the set $B_{x,y}:=\{\beta<\alpha\mid \bigcup(\im(x^1\restriction\beta))=y\restriction\beta\}$ is a club in $\alpha$,
so that altogether $B_x\cap B_{x,y}$ is stationary in $\alpha$, and for every $\beta$ in that intersection,
$y_\beta=\bigcup\im(x_\beta^1)=\bigcup\im((x\restriction\beta)^1)=\bigcup\im(x^1\restriction\beta)=y\restriction\beta$.
\end{proof}

Appeal to Theorem~\ref{pro1}
to fix a streamlined $\kappa$-Souslin tree $T$
such that in the forcing extension by $T$,
$\p(\kappa,2,{\sq},1,\{\kappa\},2,1)$ holds.
As well, by Claim~\ref{cla2}, $\diamondsuit_S^*(\kappa\textup{-trees})$ holds in this extension.
So we may invoke \cite[Theorem~4.1]{paper62}.
\end{proof}

\begin{cor} Suppose $\lambda=\lambda^{<\lambda}$ is an uncountable cardinal and $2^\lambda=\lambda^+$.
Then $\add(\lambda,1)$ introduces a $\lambda$-complete $\lambda^+$-Souslin tree that forces the existence of a full $\lambda^+$-Souslin tree.
\end{cor}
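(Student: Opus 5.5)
The plan is a two-step argument: first establish the right hypotheses in $V^{\add(\lambda,1)}$, then run the machinery of the previous corollary (Theorem~\ref{pro1} plus a $\diamondsuit^*$-type guessing principle preserved by the Souslin tree), adapted to the successor $\kappa:=\lambda^+$.

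Let $V[H]$ be the extension by $\add(\lambda,1)$. This forcing is $\lambda$-closed and has size $\lambda^{<\lambda}=\lambda$, so it preserves $\lambda^{<\lambda}=\lambda$, $2^\lambda=\lambda^+$, and all cardinals and cofinalities. By Shelah's theorem, $2^\lambda=\lambda^+$ with $\lambda$ uncountable gives $\diamondsuit(E^{\lambda^+}_\lambda)$ in $V[H]$. Hence the proof of Corollary~\ref{cor2} applies there and yields $\p_\lambda(\lambda^+,\lambda^+,{\sq_\lambda},\lambda^+,\{E^{\lambda^+}_\lambda\},2,1)$. Theorem~\ref{pro1} with $\chi:=\lambda$ then gives a streamlined $\lambda$-complete $\lambda^+$-Souslin tree $\mathbf T\in V[H]$. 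Forcing with $\mathbf T$ yields $\p_\lambda(\lambda^+,2,{\sq_\lambda},\lambda^+,\{E^{\lambda^+}_\lambda\},2,1)$, in particular a $\lambda$-bounded $\sq_\lambda$-coherent $C$-sequence with strong hitting.

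The genuine use of the Cohen subset is to obtain a guessing principle at $\lambda^+$ of the sort used in the previous corollary, say $\diamondsuit^*_S(\lambda^+\textup{-trees})$ or a variant tailored to $S=E^{\lambda^+}_\lambda$. This is the input that \cite{paper62} combines with a proxy principle to build a full Souslin tree, i.e.\ one whose limit levels are mostly complete, with fullness witnessed at points of $E^{\lambda^+}_\lambda$. I expect that \cite{paper62} proves such a principle holds after $\add(\lambda,1)$ under $\lambda^{<\lambda}=\lambda$ and $2^\lambda=\lambda^+$; if it does not, I would prove it directly. The direct argument enumerates all $\lambda^+$-trees via $2^\lambda=\lambda^+$ and uses the generic Cohen function on $\lambda$ to code, at each $\alpha\in E^{\lambda^+}_\lambda$, guesses along a fixed bijection of $\alpha$ with $\lambda$, so that genericity gives stationarily many correct guesses below $\alpha$.

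Next I would show, as in Claim~\ref{cla2}, that this guessing principle survives forcing with $\mathbf T$. The points are these:
\begin{itemize}
\item $\mathbf T$ is $\lambda$-complete, hence adds no new subsets of $\lambda$. So stationarity of subsets of $\alpha\in E^{\lambda^+}_\lambda$ (via cofinal $\lambda$-sequences) is preserved.
\item Trees in the extension can be pulled back to the ground-model tree $X$ of pairs (condition, forced value).
\item The translation to subtrees of ${}^{<\lambda^+}2$ works as before.
\end{itemize}
In the extension we then have both the narrow proxy principle and the guessing principle, and we invoke the full-Souslin-tree construction of \cite[Theorem~4.1]{paper62}, or its successor-cardinal analogue.

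The main obstacle is the preservation step. In Claim~\ref{cla2} the argument used that points of $S$ are regular, so that $B_{x,y}$ is a club in $\alpha$. Here $\alpha$ has cofinality $\lambda$, so I would restrict attention to clubs of $\alpha$ of order type $\lambda$. The real risk is whether the exact variant of diamond-star required by \cite{paper62} at successors is what $\add(\lambda,1)$ provides.
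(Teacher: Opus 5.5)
Your overall plan matches the paper's: force with $\add(\lambda,1)$, apply Theorem~\ref{pro1} in $V[H]$, check that the needed hypotheses survive forcing with $\mathbf T$, then invoke \cite{paper62}. But you misidentify both inputs, and the first is a genuine error.

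Shelah's theorem derives $\diamondsuit(S)$ from $2^\lambda=\lambda^+$ only for stationary $S\subseteq E^{\lambda^+}_{\neq\cf(\lambda)}$. Since $\lambda=\lambda^{<\lambda}$ forces $\lambda$ to be regular, $E^{\lambda^+}_\lambda$ is exactly the excluded critical cofinality, and $\diamondsuit(E^{\lambda^+}_\lambda)$ can fail under GCH. That is why Theorem~\ref{thma} assumes it separately. (Had your reasoning been right, the Cohen subset would be superfluous at this step, and Corollary~\ref{cor2} would already apply in $V$.) So you cannot rerun the proof of Corollary~\ref{cor2} in $V[H]$. The paper instead cites \cite[Theorem~5.7]{paper23}: in $V^{\add(\lambda,1)}$, the $\lambda$-bounded proxy principle with $\mu=2$, relation ${\sqleft{\lambda}}$ and $\mathcal S=\{E^{\lambda^+}_\lambda\}$ holds outright. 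Then $\lambda^{<\lambda}=\lambda$ converts it into the $\mu=\lambda^+$, ${\sq_\lambda}$ instance required by Theorem~\ref{pro1}. Substituting for the missing diamond at the critical cofinality is one of the two real uses of the Cohen subset.

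The other use is not a $\diamondsuit^*$-type principle at $\lambda^+$. Theorem~4.1 of \cite{paper62}, used in the previous corollary, is tailored to subtle cardinals. The successor-cardinal result is \cite[Theorem~5.1]{paper62}, which needs $\diamondsuit(\lambda)$ and $2^\lambda=\lambda^+$ together with $\p_\lambda(\lambda^+,2,{\sq_\lambda},\lambda^+,\{E^{\lambda^+}_\lambda\},2,1)$. $\add(\lambda,1)$ forces $\diamondsuit(\lambda)$ directly, and preservation is trivial. $\mathbf T$ is a $\lambda^+$-Souslin tree, hence $\lambda^+$-distributive, so it adds no subsets of $\lambda$ and leaves $2^\lambda$ unchanged. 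Note that this comes from distributivity, not from $\lambda$-completeness, which only rules out new sequences of length $<\lambda$.

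So the obstacle you flag, adapting Claim~\ref{cla2} to points of cofinality $\lambda$, never arises. Your fallback sketch for extracting a $\diamondsuit^*$-type principle from the Cohen subset is also not yet an argument. $2^\lambda=\lambda^+$ enumerates $H_{\lambda^+}$, not the $2^{\lambda^+}$ many $\lambda^+$-trees, and nothing in the sketch shows that genericity produces the required stationary guessing.
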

\begin{proof} Recall that $V^{\add(\lambda,1)}\models\diamondsuit(\lambda)$.
Meanwhile, by \cite[Theorem~5.7]{paper23}, in $V^{\add(\lambda,1)}$,
$\p_\lambda(\lambda^+,2,\mathcal R,\lambda^+,\{E^{\lambda^+}_\lambda\},\allowbreak\sigma,1)$ holds for
$\mathcal R:={\sqleft{\lambda}}$ and every $\sigma<\lambda$, in particular, $\sigma:=1$.
As $\lambda^{<\lambda}=\lambda$, it follows that (in $V^{\add(\lambda,1)}$)
$\p_\lambda(\lambda^+,\lambda^+,\mathcal R,\lambda^+,\{E^{\lambda^+}_\lambda\},\allowbreak2,1)$ holds
for $\mathcal R:={\sq}$, in particular for $\mathcal R:={\sq_\lambda}$.
By Theorem~\ref{pro1}, then,
in the same model there exists a streamlined $\lambda$-complete $\lambda^+$-Souslin tree $\mathbf T$
such that in the forcing extension by $\mathbf T$,
$\p_{\lambda}(\lambda^+,2,{\sq_\lambda},\lambda^+,\allowbreak\{E^{\lambda^+}_\lambda\},2,1)$ holds.
As $\mathbf T$ is $\lambda^+$-distributive, $\diamondsuit(\lambda)$ remains to hold.
As $\mathbf T$ has size $\lambda^+$, $2^\lambda=\lambda^+$ remains to hold.
Now, appeal to \cite[Theorem~5.1]{paper62}.
\end{proof}

\section{A complementary result}\label{sec5}
In this section, we establish Theorem~\ref{thme}
by modifying an argument of Kunen from {\cite[\S3]{MR495118}}.
Denote by $\mathbb S_{\kappa}$ the forcing notion
consisting of the collection of all normal streamlined homogeneous subtrees of ${}^{<\kappa}2$ that are either empty or of a successor height;
the order on $\mathbb S_{\kappa}$ is defined by taking end extensions.
This forcing adds a $\kappa$-Souslin tree, as follows.

\begin{fact}[{\cite[\S3]{MR495118}}]\label{issouslin}
For any $\mathbb S_{\kappa}$-generic filter $G$, $T(G):=\bigcup G$ is a normal streamlined homogeneous $\kappa$-Souslin tree in $V[G]$.
\end{fact}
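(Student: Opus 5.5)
We need to prove Fact~\ref{issouslin}: for $G$ generic over $\mathbb S_\kappa$, $T(G)=\bigcup G$ is a normal streamlined homogeneous $\kappa$-Souslin tree in $V[G]$. The approach is Kunen's: establish closure and density properties of $\mathbb S_\kappa$, then seal antichains by a bootstrapping argument carried out inside a chain of conditions.

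The first step is to show that $\mathbb S_\kappa$ is $\kappa$-strategically closed; in fact it is ${<}\kappa$-closed along sequences that can be capped. Let $\langle p_i\mid i<\delta\rangle$ be a decreasing sequence with $\delta<\kappa$ and limit height $\alpha$. We extend $\bigcup_i p_i$ to level $\alpha$ by adding branches. The extension must be homogeneous, with level $\alpha$ closed under the action of level-$\alpha$-restricted automorphisms (maps $x\mapsto x\oplus y$ for $y$ of lower height). It must also be normal, so every node needs an extension at level $\alpha$. We therefore pick, for each node $s$, one cofinal branch through $s$, and take the orbit of these branches under the homogeneity action. Since the tree has size ${<}\kappa$ (here $\kappa$ is regular and we need $2^{<\kappa}$ behaviour only mildly), level $\alpha$ has size ${<}\kappa$; we then add a successor level of all one-point extensions. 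The only difficulty is guaranteeing that cofinal branches exist at all stages. For this, the strategy for player II keeps a distinguished branch and uses homogeneity to propagate it.

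Density arguments then show that $T(G)$ has height $\kappa$ and is normal, streamlined and homogeneous. Every level has size ${<}\kappa$ because each level is decided by a single condition. Hence $T(G)$ is a $\kappa$-tree, and since the forcing adds no new ${<}\kappa$-sequences, no cardinals collapse.

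The main step, and the one I expect to be the obstacle, is showing there are no $\kappa$-antichains; by normality this also excludes cofinal branches. Let $\dot A$ be a name for a maximal antichain and $p$ a condition. Build a decreasing sequence $\langle p_i\mid i<\omega\rangle$ together with heights $\alpha_i$ such that $p_{i+1}$ decides, for every node $t$ of $p_i$, some element of $\dot A$ compatible with $t$, lying inside $p_{i+1}$. Take $\alpha=\sup\alpha_i$. At the limit, choose the new level $\alpha$ to consist of the homogeneity orbit of branches, each of which passes through a decided element of $\dot A$; this is possible because every node already has a decided compatible antichain element below height $\alpha$. Homogeneity is what ensures the orbit preserves this property, since automorphisms of $T\restriction\alpha$ map $A\cap T\restriction\alpha$ to itself only if the $\dot A$-decisions are symmetric. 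This symmetry is the delicate point. To handle it, I would instead seal the antichain via the orbit of the tree, in the manner Kunen uses, by choosing just enough branches that every branch in the orbit meets $\dot A$. Then $p^*$ forces that $\dot A\subseteq T\restriction\alpha$, and hence $|\dot A|<\kappa$. A genericity argument over $p$ completes the proof.
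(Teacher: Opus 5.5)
Your overall framework is sound: strategic closure via a distinguished branch, and new limit levels formed as the translation-orbit of a single branch. But the antichain-sealing step has a genuine gap, and it is the heart of Kunen's argument, which the paper cites for this fact.

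Your requirement on the $p_i$ is that for every node $t$ of $p_i$, some element of $\dot A$ compatible with $t$ is decided inside $p_{i+1}$. This only makes the decided set $A_q$ a maximal antichain of $q:=\bigcup_{i<\omega}p_i$. It does not yield a level $\alpha$ that is both legitimate and sealing.

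Homogeneity forces level $\alpha$ to be closed under translation: if $b$ lies on it and $s\in q$, then $s\cup b\restriction[\dom(s),\alpha)$ lies on it too. So you need one cofinal branch $b$ all of whose translates meet $A_q$. Maximality of $A_q$ says nothing of the sort. Already in ${}^{<\alpha}2$, the maximal antichain $\{c\restriction\beta{}^\smallfrown\langle 1-c(\beta)\rangle\mid\beta<\alpha\}$ is missed by the branch $c$.

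Your sentence ``this is possible because every node already has a decided compatible antichain element'' does not establish this. The remedy ``choosing just enough branches that every branch in the orbit meets $\dot A$'' merely restates what must be proved. There is also no reason such a $b$ should exist through the already-built $q$: securing fewer than $\kappa$ translates needs a transfinite recursion whose limit stages need not be realized in $q$. The worry about symmetry of $\dot A$ under automorphisms is a red herring. Finally, as written your $\omega$-sequence is not built along the strategy, so not even one cofinal branch through $q$ is guaranteed.

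The missing idea is to build the seed branch together with the conditions, doing the bookkeeping over translates rather than over nodes.

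Carry a distinguished top node $b_i$ of $p_i$, where $p_i$ has height $\alpha_i+1$. To pass to stage $i+1$, enumerate the top level of $p_i$ as $\langle s_j\mid j<\mu\rangle$ with $\mu<\kappa$. Run an inner recursion of pairs $(r_j,c_j)$ starting from $(p_i,b_i)$, using the orbit of $\bigcup_{j'<j}c_{j'}$ as the new level at inner limit stages.

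At step $j$, the translate $t_j:=s_j\cup c_j\restriction[\alpha_i,\dom(c_j))$ lies in $r_j$. Extend to $r_{j+1}$ forcing some $a$ comparable with $t_j$ into $\dot A$. Pick a top node $d$ of $r_{j+1}$ above both $t_j$ and $a$, and set $c_{j+1}:=b_i\cup d\restriction[\alpha_i,\dom(d))$. Then:
\begin{itemize}
\item $c_{j+1}$ is a top node of $r_{j+1}$, by homogeneity;
\item it end-extends $c_j$;
\item its $s_j$-translate is $d\supseteq a$.
\end{itemize}

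Now let $b:=\bigcup_i b_i$ and $\alpha:=\sup_i\alpha_i$, and take level $\alpha$ to be $\{s\cup b\restriction[\dom(s),\alpha)\mid s\in q\}$. For $s\in p_i$, the node $s\cup b\restriction[\dom(s),\alpha)$ extends $s'\cup b_{i+1}\restriction[\alpha_i,\dom(b_{i+1}))$, where $s':=s\cup b\restriction[\dom(s),\alpha_i)$ is a top node of $p_i$. Hence it lies above an element forced into $\dot A$. So the resulting condition forces $\dot A\subseteq q$, and $\dot A$ has size less than $\kappa$.
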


For brevity, let us denote $T(G)$ by $T$, and its canonical name by $\dot T$.

\begin{fact}[{\cite[\S3]{MR495118}}]\label{equivalent}
If $\kappa=\kappa^{<\kappa}$, then $\mathbb S_{\kappa}*\dot{T}$ is forcing equivalent to $\add(\kappa,1)$.
\end{fact}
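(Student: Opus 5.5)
The plan is to exhibit a dense subset $D$ of $\mathbb S_\kappa*\dot T$ that is $\kappa$-closed, has size $\kappa$ and is nowhere atomic. Then we invoke the standard characterization: when $\kappa^{<\kappa}=\kappa$, every separative $\kappa$-closed poset of size $\kappa$ in which every condition has two incompatible extensions is forcing equivalent to $\add(\kappa,1)$.

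We take $D$ to consist of all pairs $(t,\check x)$ where $t\in\mathbb S_\kappa$ has height $\alpha+1$ and $x$ lies on the top level $t_\alpha$ of $t$.

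\emph{Step 1 (a limit-level lemma).} This is the main technical point, and the closure of both $\mathbb S_\kappa$ and $D$ will rest on it. Let $\langle t_i\mid i<\delta\rangle$ be an end-extending sequence in $\mathbb S_\kappa$ with $\delta<\kappa$ a limit, let $t:=\bigcup_i t_i$, of limit height $\eta$, and let $b$ be a cofinal branch through $t$. We put as level $\eta$ the set
\[
L_b:=\{\,y\in{}^\eta2\mid \exists\beta<\eta\,[\,y\restriction\beta\in t\ \&\ \forall\gamma\in[\beta,\eta)\,(y(\gamma)=b(\gamma))\,]\,\}.
\]
The claims are as follows. First, $L_b\ni b$, and every node of $t$ extends into $L_b$: given $s\in t$ of height $\beta$, apply the homogeneity automorphism of $t$ sending $b\restriction\beta$ to $s$. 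This automorphism only modifies coordinates below $\beta$, so it carries $b$ to some $y\in L_b$ extending $s$. Second, $t\cup L_b$ is still homogeneous, since the relevant automorphisms again act on a bounded set of coordinates and therefore preserve $L_b$. Third, $|L_b|\le|t|<\kappa$. Finally, we append a successor level by taking both one-point extensions of every node of $L_b$. The result is a condition of height $\eta+2$ that is below every $t_i$ and contains $b$.

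\emph{Step 2 (closure).} If the $t_i$ have successor heights, Step~1 applied to any branch shows that $\mathbb S_\kappa$ is $\kappa$-closed. For a decreasing sequence $\langle(t_i,\check x_i)\mid i<\delta\rangle$ in $D$, the $x_i$ are $\subseteq$-increasing, so $b:=\bigcup_i x_i$ is a cofinal branch through $\bigcup_i t_i$. Applying Step~1 with this $b$ and taking as second coordinate a successor of $b$ gives a lower bound in $D$.

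\emph{Step 3 (density).} Given $(t,\dot x)$, we use $\kappa$-closure and the fact that $\dot x$ is forced to be a node of $\dot T$ to find $t'\le t$ deciding $\dot x=\check x$ for some $x\in t'$. We then extend $t'$ by normality to a successor height $\alpha+1$ and choose $x'\in t'_\alpha$ above $x$. This gives $(t',\check x')\le(t,\dot x)$ in $D$.

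\emph{Step 4 (the remaining hypotheses).} We have $|D|\le|{}^{<\kappa}({}^{<\kappa}2)|=\kappa$ by $\kappa^{<\kappa}=\kappa$. Nonatomicity is clear: $(t,\check x)$ has the two incompatible extensions obtained by extending $t$ one level and choosing the two distinct successors of $x$. If needed, we pass to the separative quotient, which preserves all of the above.

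The step I expect to be most delicate is Step~1, namely verifying that $L_b$ keeps the tree homogeneous and normal. Here the precise definition of ``homogeneous'' used in $\mathbb S_\kappa$ matters: the tree should be closed under the bounded-support swaps $s\mapsto s'\cup(u\restriction[\beta,\dom u))$ for $s,s'$ of the same height $\beta$. I would check that $L_b$ is exactly the closure of $\{b\}$ under these swaps.
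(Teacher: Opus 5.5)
Your strategy is the standard argument from Kunen's \S3, which the paper cites rather than proves. The pairs $(t,\check x)$ with $x$ on the top level of $t$ form a dense, nonatomic, $\kappa$-closed subset $D$ of size $\kappa$. Step~1 (closing off $\bigcup_i t_i$ by the orbit $L_b$ of a single branch under bounded swaps) is correct, and so is the closure of $D$ in Step~2.

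However, one assertion is false, and Step~3 relies on it: $\mathbb S_\kappa$ itself is \emph{not} $\kappa$-closed once $\kappa>\aleph_1$. Step~1 needs a cofinal branch through $t=\bigcup_{i<\delta}t_i$. When $\cf(\delta)>\omega$, nothing supplies one, because the limit levels of the $t_i$ need not contain the limits of previously chosen chains. For example, let $A\subseteq{}^{<\omega_1}2$ be a normal homogeneous Aronszajn tree (e.g.\ a coherent one closed under finite modifications, which exists in ZFC). Then $\langle A\cap{}^{\le i}2\mid i<\omega_1\rangle$ is a decreasing sequence in $\mathbb S_\kappa$, for any $\kappa\ge\aleph_2$, with no lower bound: normality of a lower bound would yield a cofinal branch through $A$. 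This is exactly why one passes to $D$, where the second coordinates carry the branch along. $\mathbb S_\kappa$ is only $\kappa$-strategically closed.

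The repair of Step~3 is easy. Deciding $\dot x$ needs no closure at all: $\dot T$ is forced to be a subset of the ground-model set ${}^{<\kappa}2$, so some $t_1\le t$ and some $x\in{}^{<\kappa}2$ satisfy $t_1\Vdash\dot x=\check x$. To get $x$ into the tree, you only need extensions of $t_1$ of arbitrary height $\gamma<\kappa$. Indeed, any $t'\le t_1$ of height $>\dom(x)$ must contain $x$, since $t'$ forces $\dot T\cap{}^{\dom(x)}2=t'\cap{}^{\dom(x)}2$. Such extensions can be obtained in either of two ways:
\begin{itemize}
\item iterate one-level extensions inside $D$, starting from $(t_1,\check y)$ for a top-level $y$, and use the closure of $D$ (not of $\mathbb S_\kappa$) at limit stages;
\item add levels directly by attaching finite-support tails above the top level of $t_1$.
\end{itemize}

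Finally, drop the remark about passing to the separative quotient. It is unnecessary, because $D$ is already separative: if $(t',x')\not\le(t,x)$, then using splitting some extension of $(t',x')$ is incompatible with $(t,x)$. It is also not innocuous, since in general $\kappa$-closure is not preserved by separative quotients.
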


Recall that for a forcing notion $\mathbb P$,
we say that `the tree property holds at $\kappa$ indestructibly under $\mathbb P$'
iff there are no $\kappa$-Aronszajn trees and this remains to be the case in any forcing extension by $\mathbb P$.

\begin{lemma}\label{squarefail} Suppose $\kappa=\kappa^{<\kappa}$,
the tree property holds at $\kappa$ indestructibly under $\add(\kappa,1)$.
Then for any $\mathbb S_{\kappa}$-generic $G$,
the forcing extension $V[G]$ satisfies both of the following:
\begin{enumerate}[label=\textup{(\arabic*)}]
\item\label{leme1} There exists a $\kappa$-Souslin tree;
\item\label{leme2} Every $\kappa$-Souslin tree forces that $\square(\kappa,{<}\mu)$ fails for every $\mu<\kappa$.
\end{enumerate}
\end{lemma}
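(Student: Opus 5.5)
Clause~\ref{leme1} is immediate from Fact~\ref{issouslin}: $T(G)$ is a $\kappa$-Souslin tree in $V[G]$. The work is in Clause~\ref{leme2}.

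For Clause~\ref{leme2}, work in $V[G]$ and let $U$ be an arbitrary $\kappa$-Souslin tree there, with $H$ a $U$-generic filter over $V[G]$. Suppose towards a contradiction that $\square(\kappa,{<}\mu)$ holds in $V[G][H]$ for some $\mu<\kappa$. The idea is to absorb $U$ into a larger extension in which the tree property holds, and transfer the square sequence there.

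First I will check that $U$ has size $\kappa$ and is $\kappa$-cc in $V[G]$. I will also check that it is $\kappa$-distributive: a normal $\kappa$-Souslin tree is ${<}\kappa$-distributive, since below any condition, the set of nodes deciding a given ${<}\kappa$-sequence contains levels from a club. Next I will use Kunen's absorption. By homogeneity of $T$, and since $U$ is $\kappa$-Souslin in $V[G]$, I want to argue that $T$ remains $\kappa$-Souslin (or at least $\kappa$-cc and ${<}\kappa$-distributive) in $V[G][H]$. The cleanest approach is to consider the product $T\times U$ over $V[G]$. The extension $V[G][b][H]$, with $b$ a $T$-generic branch, is then a forcing extension of $V[G][b]$, which by Fact~\ref{equivalent} is a $V$-generic extension by $\add(\kappa,1)$, via $U$ interpreted there.

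The key step, and the main obstacle, is showing that in $V[G][b]$ the tree $U$ still has no cofinal branches and no $\kappa$-sized antichains, or at least that forcing with $U$ over $V[G][b]$ does not add a $\kappa$-Aronszajn tree. My preferred alternative is simply to avoid needing $U$ to be Souslin there. The square sequence $\vec{\mathcal C}\in V[G][H]$ yields, by Fact~\ref{square}, a $\kappa$-Aronszajn tree $A$ in $V[G][H]$. I then force further with $T$ over $V[G][H]$. This is harmless if $T$ is still $\kappa$-cc there. Symmetrically to the above, $T\times U$ is $\kappa$-cc over $V[G]$ iff $U$ remains $\kappa$-cc after $T$. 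I will try to get this from the homogeneity of $T$ together with a Kunen-style density argument inside $\mathbb S_\kappa$: any $\mathbb S_\kappa*\dot T$-name for a $\kappa$-sized antichain of $U$, or branch through $A$, can be reflected to a level where a condition in $\mathbb S_\kappa$ is extended by sealing it.

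Granting this, in $V[G][H][b]=V[G][b][H]$ the tree $A$ still has height $\kappa$ with levels of size ${<}\kappa$. Moreover, it has no cofinal branch: branches of $A$ cannot be added by the $\kappa$-cc, $\kappa$-distributive forcing $T$, since any branch would be decided level by level in $V[G][H]$. Alternatively, I will argue directly that $\square(\kappa,{<}\mu)$ is preserved, since unthreadability is preserved by $\kappa$-cc forcing: every new club contains a ground-model club. So $V[G][b][H]$ has a $\kappa$-Aronszajn tree.

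Finally, $V[G][b]$ is an $\add(\kappa,1)$-extension of $V$, so the tree property holds there by indestructibility. Then $U$, as a forcing over $V[G][b]$, must be shown not to add a $\kappa$-Aronszajn tree. I expect to handle this by noting that $U$ is ${<}\kappa$-distributive and $\kappa$-cc there, and that $\square(\kappa,{<}\mu)$ would then reflect to a $U$-name whose branch-sealing is impossible. This last reduction is the delicate point, and where I expect to need Kunen's original argument in full, namely that $\mathbb S_\kappa*(\dot T\times\dot U)\cong\mathbb S_\kappa*\dot U*\dot T$ with $\dot T$ absorbing into $\add(\kappa,1)$.
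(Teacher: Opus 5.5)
There is a genuine gap. The step you call the key one is in fact false under the lemma's hypotheses. Let $b$ be $T$-generic over $V[G]$. By Fact~\ref{equivalent} and indestructibility, the tree property holds at $\kappa$ in $V[G][b]$. Since $U$ and each of its cones are still $\kappa$-trees there, $U$ \emph{does} acquire cofinal branches in $V[G][b]$.

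A cofinal branch gives a large antichain. Take such a branch $c$. For each $\alpha$, the node $c\restriction\alpha$ has an extension off $c$; otherwise its extensions would form a $\kappa$-chain of $U$ lying in $V[G]$. Taking the nodes where these extensions leave $c$, along a sufficiently sparse sequence of $\alpha$'s, gives a $\kappa$-sized antichain of $U$ in $V[G][b]$.

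So $T\times U$ is not $\kappa$-cc over $V[G]$. Running the same argument on cones of $U$ shows that $U$ outright forces $T$ to have an antichain of size $\kappa$. Hence forcing with $T$ over $V[G][H]$ is not harmless, and the plan of carrying the Aronszajn tree $A$ up to $V[G][H][b]$ fails. Two further problems:
\begin{itemize}
\item Your claim that a $\kappa$-cc, ${<}\kappa$-distributive forcing cannot add a branch to a $\kappa$-Aronszajn tree is wrong. A $\kappa$-Souslin tree is exactly such a forcing, and it adds a branch through itself.
\item Your last paragraph, requiring that $U$ add no $\kappa$-Aronszajn tree over $V[G][b]$, is unsupported. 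An arbitrary $U$ gives you no structure on which to run a Kunen-style analysis.
\end{itemize}

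The missing idea is to exploit exactly what you tried to rule out. Since $U$ is $\kappa$-Souslin in $V[G]$, every maximal antichain of $U$ lying in $V[G]$ is bounded in height. Hence every cofinal branch of $U$ is $U$-generic over $V[G]$. The branches through cones of $U$ found in $V[G][b]$ therefore show that $U$ is absorbed into $T$: by a density argument, $V[G][H]\subseteq V[G][b]$ for a suitable $T$-generic $b$ over $V[G]$. No further forcing over $V[G][H]$ is then needed.

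Here is how the argument finishes. A $\sq$-coherent $\vec{\mathcal C}\in V[G][H]$ with $(\sup_\alpha|\mathcal C_\alpha|)^+<\kappa$ lies in $V[G][b]$. There the tree property and Fact~\ref{square} give a thread $D$. Now $V[G][b]$ is a $\kappa$-cc extension of $V[G]$; it need not be one of $V[G][H]$, but that is not needed. So $D$ contains a club $D'\in V[G]\subseteq V[G][H]$, and $D'\cap\alpha\subseteq D\cap\alpha\in\mathcal C_\alpha$ for every $\alpha\in\acc(D')$.

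This $D'$ is only a weak thread. When $|\mathcal C_\alpha|>1$, you need an extra lemma to turn it into a genuine thread in $V[G][H]$; the paper cites Lemma~2.4 of \cite{MR3730566}. Your remark that unthreadability is preserved by $\kappa$-cc forcing also silently depends on such a lemma.
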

\begin{proof}
Clause~\ref{leme1} is taken care of by Fact~\ref{issouslin},
so we turn to address Clause~\ref{leme2}. To this end, let $T'$ be an arbitrary $\kappa$-Souslin tree in $V[G]$,
and let $b'$ be $T'$-generic over $V[G]$.
Recall that we denote by $T$ the $\kappa$-tree added by $G$.

\begin{claim}\label{c531} There is a $T$-generic $b$ over $V[G]$ such that $V[G][b']\subseteq V[G][b]$.
\end{claim}
\begin{proof} Suppose not. Fix $p\in T'$ forcing that for any $T'$-generic $b'$ over $V[G]$ with $p\in b'$,
there is no $T$-generic $b$ over $V[G]$ such that $V[G][b']\not\subseteq V[G][b]$.

Let $b$ be any $T$-generic over $V[G]$.
By Fact~\ref{equivalent}, $V[G][b]$ is a forcing extension of $V[G]$ by $\add(\kappa,1)$.
By our assumption, then, there are no $\kappa$-Aronszajn trees in $V[G][b]$.
In particular, the $\kappa$-subtree of $T'$ consisting of all nodes that are comparable with $p$ acquire a $\kappa$-branch $b'$ in $V[G][b]$.
Trivially, $b'$ is a $\kappa$-branch through $T'$ with $p\in b'$.
But $T'$ is a $\kappa$-Souslin in $V[G]$, hence any $\kappa$-branch through it is generic over it, so $b'$ is generic over $V[G]$.
However, $V[G][b'] \subseteq V[G][b]$,
contradicting the fact that $p$ forces that $V[G][b']\not\subseteq V[G][b]$.
\end{proof}

In $V[G][b']$, let $\vec{\mathcal C}=\langle \mathcal C_\alpha\mid\alpha<\kappa\rangle$ be a $\sq$-coherent $\mathcal C$-sequence over $\kappa$
such that $(\sup_{\alpha<\kappa}|\mathcal C_\alpha|)^+<\kappa$;
we need to prove that it admits a thread.

Let $b$ be given by the claim, so that $V[G][b']\subseteq V[G][b]$.
Recall that by Fact~\ref{equivalent},
the tree property holds at $\kappa$ in $V[G][b]$,
so Fact~\ref{square} implies that $\square(\kappa,{<}\kappa)$ fails, in particular, we may fix in $V[G][b]$ a club $D$ threading $\vec{\mathcal C}$.
As $V[G][b]$ is a $\kappa$-cc forcing extension of $V[G]$, we may fix a subclub $D'$ of $D$ lying in $V[G]$.
Working back in its extension $V[G][b']$,
for every $\alpha\in\acc(D')$, it is the case that $D'\cap\alpha\s D\cap\alpha\in\mathcal C_\alpha$.
By \cite[Lemma~2.4]{MR3730566}, then, $\vec{\mathcal C}$ admits a thread.
\end{proof}

We are now ready to derive a strong form of Theorem~\ref{thme}.

\begin{cor} Assuming the consistency of large cardinals, the conjunction of the following two bullet points is compatible with $\kappa$ being a successor of a regular uncountable,
a successor of a singular, or a strongly inaccessible.
\begin{itemize}
\item There is a $\kappa$-Souslin tree;
\item Every $\kappa$-Souslin tree forces that $\square(\kappa,{<}\mu)$ fails for every $\mu<\kappa$.
\end{itemize}
\end{cor}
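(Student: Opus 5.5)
By Lemma~\ref{squarefail}, it suffices to produce, in each of the three cases, a model of $\kappa=\kappa^{<\kappa}$ in which the tree property at $\kappa$ is indestructible under $\add(\kappa,1)$. Forcing with $\mathbb S_\kappa$ over such a model then yields both bullet points. So the proof reduces to citing or arranging known models of indestructible tree property at each of the three kinds of cardinals, while securing $\kappa^{<\kappa}=\kappa$.

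\emph{Strongly inaccessible case.} Start with a weakly compact $\kappa$ and first force with a reverse Easton iteration that makes its weak compactness indestructible under $\add(\kappa,1)$. The Silver-style iteration adding Cohen subsets at inaccessibles below $\kappa$ works: lift an embedding $j:M\to N$ for a $\kappa$-model $M$ through $\add(\kappa,1)$ using a master condition. In the resulting model, $\kappa$ is inaccessible, hence $\kappa^{<\kappa}=\kappa$. Every extension by $\add(\kappa,1)$ keeps $\kappa$ weakly compact, so the tree property holds there.

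\emph{Successor of a regular uncountable.} Take $\kappa=\lambda^+$ with $\lambda$ regular uncountable, and use Mitchell's model obtained from a weakly compact (in fact, as we expect to need, a sufficiently indestructible one). Collapse the weakly compact to $\lambda^{++}$ in a way that yields $2^\lambda=\lambda^{++}$; rename so that the target $\kappa$ satisfies $\kappa^{<\kappa}=\kappa$. Here there is a tension: Mitchell's model has $2^{\lambda}=\kappa$ where $\kappa=\lambda^{++}$. So take $\kappa=\mu^{+}$ with $\mu=\lambda^+$, giving $\kappa^{<\kappa}=2^{\mu}$. We therefore need $2^\mu=\mu^+$ together with the tree property at $\mu^+$, which is possible in the Mitchell-type models for $\aleph_2$ with $2^{\aleph_1}=\aleph_2$ but $2^{\aleph_0}=\aleph_2$. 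Indestructibility under $\add(\kappa,1)$ is then argued as in the literature: $\add(\kappa,1)$ is $\kappa$-closed, and the standard branch lemmas (closed forcing adds no branches to trees of height $\kappa$ when $2^{<\kappa}$ is suitably large, as in Unger's work) show it preserves the tree property. Alternatively, one can absorb $\add(\kappa,1)$ into the Mitchell collapse quotient.

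\emph{Successor of a singular.} Use known models of the tree property at $\aleph_{\omega+1}$, or at $\mu^+$ with $\mu$ singular strong limit (Magidor--Shelah, Neeman, Sinapova). In these, $\mu$ is strong limit and $2^\mu=\mu^+$ can be arranged (for example, Neeman's tree property at $\aleph_{\omega+1}$ has SCH-type behavior), giving $\kappa^{<\kappa}=\kappa$. Indestructibility under $\add(\mu^+,1)$ should follow because the tree property argument there comes from a supercompactness-type embedding that lifts through the $\mu^+$-closed forcing, absorbed into the collapse.

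The main obstacle is the successor cases. One must simultaneously secure $\kappa^{<\kappa}=\kappa$ and indestructibility under $\add(\kappa,1)$, and the models with this combination must be extracted from the literature with care; I would first try to cite existing indestructibility results and otherwise argue via absorption into the collapse.
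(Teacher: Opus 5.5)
The reduction to Lemma~\ref{squarefail} is exactly the paper's, and your inaccessible case is fine: a reverse Easton preparation making weak compactness indestructible under $\add(\kappa,1)$ is what Kunen's original argument provides. The gap is that in neither successor case do you actually establish indestructibility.

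For $\kappa=\mu^{++}$ in Mitchell's model, the cardinal arithmetic is fine: as you eventually note, $2^{\mu^+}=\mu^{++}$ gives $\kappa^{<\kappa}=\kappa$, so there is no real tension. The indestructibility argument you offer, however, fails. Silver/Unger-type branch lemmas show that a sufficiently closed forcing adds no new cofinal branches to $\kappa$-trees \emph{of the ground model}. But the ground model has no $\kappa$-Aronszajn trees to begin with. The danger is a $\kappa$-Aronszajn tree \emph{added} by $\add(\kappa,1)$, and those lemmas say nothing about such trees. What is needed is a lifting argument for the whole iteration $\mathbb M*\add(\kappa,1)$. You must extend the weak-compactness embedding through $\add(\kappa,1)$ as well, and then show that the quotient you force with to do so adds no cofinal branch to the given tree; this is where branch lemmas legitimately enter. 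Alternatively, cite a result that does this. The paper cites \cite[Lemma~4.14]{MR3893286}: from a weakly compact and $\mu=\mu^{<\mu}$ it yields the tree property at $\mu^{++}$, indestructible under $\add(\mu^{++},1)$, together with $\kappa^{<\kappa}=\kappa$. Your ``absorb into the quotient'' alternative only gestures at this.

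In the singular case, the missing idea is to preserve the large cardinals rather than lift an embedding through a collapse. The paper takes $\lambda=\sup_{n<\omega}\lambda_n$ with each $\lambda_n$ supercompact, so that $\kappa=\lambda^+$ has the tree property by Magidor--Shelah \cite[Theorem~3.1]{MR1420265}. Since $\add(\kappa,1)$ is $\lambda_n$-directed closed for every $n$, a Laver-style preparation makes each $\lambda_n$ remain supercompact in any $\add(\kappa,1)$-extension. There $\kappa$ is still $\lambda^+$ (the forcing is $\kappa$-closed) and $\lambda$ is still a limit of supercompacts, so Magidor--Shelah applies again. Moreover, $\kappa^{<\kappa}=2^\lambda=\lambda^+$ need not be ``arranged'': it follows from Solovay's theorem that SCH holds above a strongly compact cardinal. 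In this model there is no collapse to absorb anything into. For the collapse-based models at $\aleph_{\omega+1}$ that you mention, indestructibility under $\add(\aleph_{\omega+1},1)$ is not automatic, and your `should follow' supplies no argument for it.
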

\begin{proof} \textbf{Inaccessible:}
If there is a weakly compact cardinal $\kappa$,
then by Kunen's original argument \cite[\S3]{MR495118},
there is a forcing extension in which $\kappa$ is a strongly inaccessible having the tree property and it is moreover indestructible under $\add(\kappa,1)$.
Since $\kappa$ is inaccessible, we have $\kappa^{<\kappa}=\kappa$ in this model,
so we are in conditions to appeal to Lemma~\ref{squarefail}.

\textbf{Successor of regular uncountable:}
As proved in \cite[Lemma~4.14]{MR3893286},
starting from a weakly compact cardinal and an infinite cardinal $\mu=\mu^{<\mu}$,
one obtains a model in which the tree property holds at $\kappa:=\mu^{++}$ and it is indestructible under $\add(\kappa,1)$.
Moreover, $\kappa^{<\kappa}=\kappa$ holds in this model.

\textbf{Successor of singular:}
By \cite[Theorem~3.1]{MR1420265},
for every increasing sequence $\langle \lambda_n \mid n < \omega \rangle$ of supercompact cardinals,
the tree property holds at $\kappa:=\lambda^+$ for $\lambda:=\sup_{n<\omega}\lambda_n$.
By a suitable forcing preparation, it is possible to ensure that $\lambda_n$ be a supercompact that is indestructible under $\add(\kappa,1)$ for each $n<\omega$.
Consequently, the tree property holds at $\kappa$ and is indestructible under $\add(\kappa,1)$.
In addition, $\kappa^{<\kappa}=2^\lambda=\lambda^+=\kappa$
by Solovay's theorem that the Singular Cardinal Hypothesis holds above a strongly compact.
\end{proof}

\section*{Acknowledgments}
The first and third authors were partially supported by the Israel Science Foundation (grant agreement 3469/25).
The second author was partially supported by the Israel Science Foundation (grant agreement 203/22).

\end{document}